\newcommand{\Aut}{\mathrm{Aut}}
\newcommand{\Alb}{\mathrm{Alb}}
\newcommand{\Pic}{\mathrm{Pic}}
\newcommand{\NS}{\mathrm{NS}}
\newcommand{\MP}{\mathrm{MP}}
\newcommand{\Picard}{\mathrm{Picard}}
\newcommand{\N}{\mathrm{N}}
\newcommand{\Mil}{\mathrm{Mil}}
\newcommand{\Poincar}{\mathrm{Poincar}}
\newcommand{\disc}{\mathrm{disc}}
\newcommand{\ao}{\mathbb{A}^1}
\newcommand{\G}{\Gamma}
\newcommand{\La}{\Lambda}
\newcommand{\X}{\widetilde{X_3}}
\newcommand{\C}{\mathbb{C}}
\newcommand{\Ql}{\mathbb{Q}_l}
\newcommand{\Q}{\mathbb{Q}}
\newcommand{\et}{\mathrm{H}_{\text{\'et}}}
\newcommand{\etc}{\mathrm{H}_{\text{\'et,c}}}
\newcommand{\cris}{\mathrm{H}_{\mathrm{cris}}}
\newcommand{\Omg}{\Omega}
\newcommand{\PP}{\mathbb{P}}
\newcommand{\OO}{\mathcal{O}}
\newcommand{\A}{\mathrm{Aut}}
\newcommand{\Z}{\mathbb{Z}}
\newcommand{\Sp}{\textbf{Spec}}
\newtheorem{theorem}{Theorem}[section]
\newtheorem{Assumption}[subsection]{Assumption}
\newtheorem{thm}[theorem]{Theorem}
\newtheorem{cor}[theorem]{Corollary}
\newtheorem{lemm}[theorem]{Lemma}
\newtheorem{prop}[theorem]{Proposition}
\newcommand{\Ker}{\mathrm{Ker}}
\newcommand{\chari}{\mathrm{char}}
\newcommand{\Id}{\mathrm{Id}}
\newcommand{\ddim}{\mathrm{dim}}
\newcommand{\proj}{\mathrm{proj}}
\newcommand{\Sym}{\mathrm{Sym}}
\newcommand{\Ho}{\mathrm{H}}
\newcommand{\Fa}{\mathrm{F}}
\newcommand{\Bl}{\mathrm{Bl}}
\newcommand{\DR}{\mathrm{DR}}
\newtheorem{question}[theorem]{Question}
\newtheorem{remark}[theorem]{Remark}
\theoremstyle{plain}
\subjclass[2010]{Primary 14J50; Secondary 14F20}
\title []{Automorphism and Cohomology I: Fano Varieties of Lines and Cubics}
\author {Xuanyu Pan}
\email{pan@math.wustl.edu}
\address{Department of Mathematics, Washington University in St.Louis, St.Louis, M.O. 63130}
\date{\today}
\keywords{Automorphisms, Infinitesimal Torelli Theorems, Crystalline Cohomology, Vanishing Cycles, Lattices Theory, Deformation Theory.}
\thanks{}
\begin{document}


\begin{abstract}
In this paper, we prove a general principle of lifting automorphisms of smooth projective varieties from positive characteristic to zero characteristic. We show that the automorphism groups of the Fano varieties of smooth cubic threefolds (resp. fourfolds) act on their cohomology groups faithfully with the help of this principle. We also prove that the same result holds for Lefschetz cubic threefolds (resp. fourfolds). As an application, we classify the automorphism groups of smooth cubic fourfolds with small middle Picard numbers over complex numbers.
\end{abstract}
\maketitle
\vspace*{6pt}\tableofcontents

\section {Introduction}
A very important question in algebraic geometry is to ask whether the Torelli theorem holds for varieties. Namely, whether some algebraic structures (such as Hodge structures, F-isocrytals) on the cohomology groups of varieties can recover the geometry of varieties. Burns, Rapoport, Michael, Shafarevich, \ldots prove that the Torelli theorem holds for K3 surfaces over complex numbers $\C$ and Ogus proves that the Torelli theorem holds for supersingular K3 surfaces. The Torelli theorem says that:\\

An isomorphism $\varphi:\Ho^*(X)\simeq \Ho^*(X')$ between the cohomology groups which is preserving the algebraic structures is induced by an isomorphism $\psi : X\simeq X'$ between the varieties. 

It is natural to ask the following question:
\begin{question}
Is the map $\psi$ which satisfies $\psi^*=\varphi$ is (up to a sign) unique?
\end{question}
The question is often reduced to ask whether the automorphism group $\mathrm{Aut}(X)$ acts on the cohomology group $\Ho^*(X)$ faithfully:
\begin{equation}\label{inj}
\Aut(X)\hookrightarrow \Aut(\Ho^*(X)).
\end{equation}

For example, algebraic curves of genus at least 2 and algebraic $K3$ surfaces over an algebraically closed field have this property (\ref{inj}), see \cite{DM}, \cite{Ogus} and \cite[Chapter 15]{K3}. On the other hand, Mukai,  Namikawa and Dolgachev prove that some Enriques surfaces do not have (\ref{inj}), see \cite{Dolg} and \cite{Mukai}.

In this paper, we prove the following theorem:
\begin{theorem}\label{mainthm}
Let $X_3$ be a projective cubic threefold (resp. fourfold) with at most one ordinary double point over an algebraically closed field $k$. Suppose that $F(X_3)$ is the Fano variety of lines of $X_3$ and $l$ is a prime different from the characteristic of $k$. The natural map
\[\Aut(X_3)\hookrightarrow \Aut(\et^3(X_3,\Ql)) \text{~(resp.~} \Aut(X_3)\hookrightarrow \Aut(\et^4(X_3,\Ql))\text{)}.\]
is an injection if one of the following condition holds:
\begin{itemize}
\item $X_3$ is smooth,
\item $\chari(k)\neq 2,3$.
\end{itemize} Suppose that $Y_3$ is a smooth cubic hypersurface over $k$. The natural map
\[\Aut(\Fa(Y_3))\hookrightarrow \Aut(\et^i(\Fa(Y_3),\Ql))\]
is injective if 
\begin{itemize}
\item $i=1$, $\ddim (Y_3)= 3$ and $\chari(k)\neq 2$,
\item or if $i=2$ and $\ddim(Y_3)=4$.
\end{itemize}
\end{theorem}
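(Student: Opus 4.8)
\noindent\emph{Outline of the plan.} The idea is to reduce every statement to the faithfulness of $\Aut$ on the middle cohomology of a cubic hypersurface (of dimension $3$ or $4$), to treat the smooth case with Griffiths' Jacobian ring, and to treat the one–nodal case with the birational geometry of the node.

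\smallskip
\noindent\emph{Reductions.} For the smooth statements I would first apply the lifting principle. Given a smooth cubic $X_3\subset\PP^{n+1}_k$ and $\phi\in\Aut(X_3)$ acting trivially on $\et^{\ddim X_3}(X_3,\Ql)$, write $\phi=\phi_s\phi_u$ for its semisimple and unipotent parts in $\mathrm{PGL}_{n+2}(k)$; since the action on $\et^{\ddim X_3}(X_3,\Ql)$ is semisimple and $\phi_s^*\phi_u^*=\mathrm{id}$ with $\phi_s^*$ of order prime to $p$ and $\phi_u^*$ of $p$--power order, both $\phi_s$ and $\phi_u$ lie in the kernel. The element $\phi_s$ has order prime to $p$, so by a Teichmüller lift of the $\mathrm{PGL}$--action together with a lift of the invariant cubic the pair $(X_3,\phi_s)$ lifts to a smooth cubic $\mathcal X$ over a mixed--characteristic discrete valuation ring with an automorphism $\Phi$ reducing to $\phi_s$; smooth and proper base change makes $\Phi$ trivial on $\et^{\ddim X_3}(\mathcal X_{\bar\eta},\Ql)$, so if the statement holds over $\C$ then $\Phi_{\bar\eta}=\mathrm{id}$ and $\phi_s=\mathrm{id}$. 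The unipotent $\phi_u$ (which occurs only in small characteristic) admits no such lift and is dealt with through $\cris^{\ddim X_3}(X_3/W)$ and the compatibility of the $\ell$--adic and crystalline realizations of $\phi_u$ (after spreading out over a finite field), as in the next paragraph. Finally, over $\C$ — and, for the threefold, whenever $\chari(k)\ne 2$ so that $\Fa(Y_3)$ is the expected smooth surface — the cylinder correspondence of the universal line gives an $\Aut$--equivariant isomorphism $\et^3(Y_3,\Ql)\cong\et^1(\Fa(Y_3),\Ql)$ when $\ddim Y_3=3$ (Clemens--Griffiths, identifying $\Alb(\Fa(Y_3))$ with the intermediate Jacobian) and $\et^4(Y_3,\Ql)\cong\et^2(\Fa(Y_3),\Ql)$ when $\ddim Y_3=4$ (Beauville--Donagi), while $\Aut(Y_3)\to\Aut(\Fa(Y_3))$ is an isomorphism: it is injective because a general point of $Y_3$ lies on at least two distinct lines, so an automorphism fixing every line setwise fixes $Y_3$ pointwise, and surjective because an automorphism of $\Fa(Y_3)$ preserves the incidence with the universal line and descends to $Y_3$ (for the surface this is visible from $K_{\Fa(Y_3)}$ being the Plücker polarization, giving the self--contained variant in which $\Aut(\Fa(Y_3))$ acts on $\Ho^0(\Fa(Y_3),K)$ and hence on the Plücker space). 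Thus the four $\Fa(Y_3)$--statements are equivalent to faithfulness of $\Aut(Y_3)$ on $\et^{\ddim Y_3}(Y_3,\Ql)$, and everything reduces to cubic hypersurfaces.

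\smallskip
\noindent\emph{The smooth case.} Let $X\subset\PP^{n+1}$ be a smooth cubic of dimension $n\in\{3,4\}$ defined by $f$, with Jacobian ring $R=k[x_0,\dots,x_{n+1}]/(\partial_0 f,\dots,\partial_{n+1}f)$. Smoothness makes $(\partial_i f)$ a regular sequence, so $R$ is graded Artinian Gorenstein with Hilbert series $(1+t)^{n+2}$ and socle in degree $n+2$, and Griffiths' residue calculus identifies $\Ho^{p,q}_{\mathrm{prim}}(X)$ with a graded piece of $R$, equivariantly for $\Aut(X)\subset\mathrm{PGL}_{n+2}$ up to a single scalar. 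For $n=3$ this gives $\Ho^{2,1}_{\mathrm{prim}}(X)\cong R_1$, the space of linear forms, so an automorphism trivial on $\et^3(X,\Ql)$ is scalar on linear forms and hence the identity in $\mathrm{PGL}_5$; using crystalline rather than de Rham cohomology and the torsion--freeness of the cohomology of a smooth hypersurface (so that $\cris^3(X/W)$ is free and its Hodge filtration specializes correctly), the same argument works in every characteristic and in particular disposes of the unipotent case left open above. For $n=4$ one has instead $\Ho^{2,2}_{\mathrm{prim}}(X)\cong R_3$, and I would conclude with the reconstruction lemma: an automorphism scalar on $R_3$ is scalar on $R_1$. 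This is where smoothness is essential — the cubing map $[\ell]\mapsto[\bar\ell^{\,3}]\in\PP(R_3)$ is generically injective (two independent linear forms with $\ell^3-m^3\in(\partial_i f)$ would contradict that $(\partial_i f)$ is a regular sequence), so a general linear form is an eigenvector of the automorphism and the automorphism is scalar on $R_1$; this is the standard statement that $\Aut$ acts faithfully on the primitive middle cohomology of a smooth hypersurface outside a short exceptional list, which excludes plane cubics but not cubic threefolds or fourfolds.

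\smallskip
\noindent\emph{The one--nodal case ($\chari(k)\ne 2,3$).} Here no lifting is needed. Let $X$ have a unique ordinary double point $p$; every automorphism fixes $p$, hence preserves the decomposition $f=x_0\,Q(x_1,\dots,x_{n+1})+C(x_1,\dots,x_{n+1})$ in coordinates with $p=[1:0:\dots:0]$ (the node condition is exactly the vanishing of the $x_0^3$ and $x_0^2x_j$ terms together with the nondegeneracy of $Q$, for which $\chari(k)\ne 2$ is used), hence preserves the complete intersection $Z=\{Q=0\}\cap\{C=0\}\subset\PP(T_p\PP^{n+1})\cong\PP^n$ parametrizing the lines of $X$ through $p$ — a canonical genus--$4$ curve when $n=3$, a $K3$ surface when $n=4$. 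Projecting $X$ from $p$ realizes $\Bl_p X$ both as a resolution of $X$ with exceptional divisor the smooth quadric tangent cone (smooth because $\chari(k)\ne 2$) and as the blow--up $\Bl_Z\PP^{n+1}$; comparing the two descriptions and cancelling the classes coming from $\PP^{n+1}$ and from the exceptional quadric yields an $\Aut(X)$--equivariant isomorphism $\et^n(X,\Ql)\cong\et^{n-2}(Z,\Ql)(-1)$. Moreover $\Aut(X)\hookrightarrow\Aut(Z)$: an automorphism fixing $Z$ pointwise fixes the nondegenerate $Z\subset\PP^n$, so acts trivially on tangent directions at $p$, so fixes the residual point of every line through $p$ and hence the dense locus where $X$ is rationally parametrized from $p$, and the decomposition $f=x_0Q+C$ eliminates the one remaining transvection. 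Since $\Aut$ acts faithfully on $\et^1$ of a curve of genus $\ge 2$ and on $\et^2$ of a $K3$ surface (both recalled in the introduction), an automorphism of $X$ trivial on $\et^n(X,\Ql)$ is trivial.

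\smallskip
\noindent\emph{Where the difficulty lies.} The main obstacle is the lifting step in characteristics $2$ and $3$: one must lift a cubic together with a cyclic automorphism keeping the total space smooth, and the genuinely new input is the treatment of the unipotent part (which in characteristic $2$ includes every involution) through the $\ell$--adic/crystalline compatibility and the torsion--freeness of hypersurface cohomology — this is precisely what makes the smooth cubic statements hold in \emph{all} characteristics while the Fano surface statement still requires $\chari(k)\ne 2$. The second real point is the reconstruction lemma for cubic fourfolds, whose proof must use smoothness essentially (and must be checked to persist in small characteristic, where cubing is Frobenius--twisted). For the one--nodal case the work is the Tate--twist bookkeeping in the blow--up computation and verifying that in characteristic $\ne 2,3$ the node, the exceptional quadric and the complete intersection $Z$ are all smooth and nondegenerate — which is exactly what forces $\chari(k)\ne 2,3$ there.
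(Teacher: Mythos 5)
Your proposal keeps the paper's broad architecture — reduce the $\Fa$--statements to the cubic statements, lift positive--characteristic automorphisms to characteristic zero, and handle the one--nodal case by projecting from the node — and your one--nodal section is essentially the same as the paper's Theorem~\ref{node} (blow up the node, realize $\Bl_pX_3\cong\Bl_S\PP^n$, compare cohomologies, appeal to faithfulness for genus--$\ge 2$ curves and $K3$ surfaces). But the char--zero input for smooth cubics is genuinely different. For threefolds the paper does not touch the Jacobian ring: it proves $\Aut(S)\hookrightarrow\Aut(\et^1(S,\Ql))$ directly (Theorem~\ref{ffonfanos}) via the Lefschetz fixed--point theorem (a fixed point exists since $\chi(S)=27\ne 0$), the Albanese immersion $S\hookrightarrow\Alb(S)$ (which needs $\chari\ne 2$), and Tate's theorem on abelian varieties, and then deduces the cubic statement via the Abel--Jacobi isomorphism (Lemma~\ref{re}); for $\chari=2$ it cites a separate paper. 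For fourfolds the paper does not prove any reconstruction lemma: over $\C$ it shows the kernel of $\Aut(X_3)\to\Aut(\Ho^4)$ is deformation--invariant using the holomorphic symplectic structure on $\Fa(X_3)$ and Hassett--Tschinkel (Lemma~\ref{fre}), hence trivial by Matsumura--Monsky; in positive characteristic it lifts $\phi$ \emph{as a whole} to $W$ using the lifting criterion of \cite{PANL} together with Katz--Messing, torsion--freeness of $\cris^4$, and the infinitesimal Torelli theorem for cubic fourfolds. Your Jacobian--ring approach for the threefold is valid and arguably cleaner (and would cover $\chari=2$ directly), so that part is a genuinely different and worthwhile route.

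There are, however, two concrete gaps. First, your ``reconstruction lemma'' for cubic fourfolds (scalar on $R_3$ implies scalar on $R_1$) is not proved: the remark that $\ell^3-m^3\in(\partial_i f)$ ``would contradict regularity'' does not establish generic injectivity of the cubing map $\PP(R_1)\to\PP(R_3)$, and even if it could be made to work it would not obviously persist in characteristic $3$ — yet the theorem asserts faithfulness for smooth cubic fourfolds in all characteristics, including $3$. The paper's deformation argument plus lifting criterion covers all characteristics uniformly and avoids this issue. Second, the claim that $\Aut(Y_3)\to\Aut(\Fa(Y_3))$ is an isomorphism is too strong: for cubic fourfolds the map need not be surjective. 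The paper only proves (and only needs) the weaker statement that any $f\in\Aut(\Fa(X_3))$ acting trivially on $\et^2$ preserves the Plücker polarization and hence descends to $X_3$ — and establishing this is exactly why Section~3 works hard to show that $\NS(\Fa(X_3))$ is torsion--free. Your Jordan--decomposition lifting of $\phi_s$ is also more complicated than needed: once one knows $\phi^*$ preserves the Hodge filtration (which follows from Katz--Messing and torsion--freeness of crystalline cohomology), the whole automorphism lifts at once via the infinitesimal Torelli theorem, with no need to separate the semisimple and unipotent parts.
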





We also classify the automorphism group of a smooth complex cubic fourfold whose middle Picard number is at most $2$.
\begin{theorem}
Suppose that $X_3$ is a smooth projective cubic fourfold over $\C$. The automorphism group $\Aut(X_3)$ is
\begin{itemize}
 \item trivial or $\Z/3\Z$ if the middle $\Picard$ number is one,
\item trivial if the middle $\Picard$ number is two.
\end{itemize}
\end{theorem}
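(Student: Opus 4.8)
The plan is to deduce everything from Theorem~\ref{mainthm}, which gives a faithful action $\Aut(X_3)\hookrightarrow\Aut(\et^4(X_3,\Ql))$. Since an automorphism of a smooth cubic fourfold is induced by an element of $\mathrm{PGL}_6$, it preserves the ample generator of $\Pic(X_3)$, hence fixes $h^2$ and acts on the lattice $\Ho^4(X_3,\Z)$ by Hodge isometries fixing $h^2$; so it acts on $\Ho^4_{\mathrm{prim}}(X_3,\Z)=(h^2)^\perp$, an even lattice of signature $(20,2)$ and discriminant $3$. Write $S\subset\Ho^4_{\mathrm{prim}}(X_3,\Z)$ for the saturation of the span of the primitive Hodge classes and $T:=S^\perp$ for the transcendental lattice, so that $\mathrm{rk}\,T=22$ if the middle $\Picard$ number is one and $\mathrm{rk}\,T=21$ if it is two, and $\Aut(X_3)$ preserves $\Ho^4(X_3,\Q)=\Q h^2\oplus S_\Q\oplus T_\Q$ because automorphisms send algebraic classes to algebraic classes. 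Finally $\Aut(X_3)$ is finite (Matsumura--Monsky).

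Next I would study the action on $T$. The Hodge structure on $T$ has $h^{3,1}=1$ and is irreducible --- equivalently, the transcendental Hodge structure of the hyperk\"ahler fourfold $\Fa(X_3)$ is irreducible, a standard fact --- so any automorphism acting trivially on $T^{3,1}$ fixes a nonzero sub-Hodge-structure of $T$ and therefore acts trivially on all of $T$. Hence the image $G_T$ of $\Aut(X_3)$ in $\mathrm{O}(T)$ embeds into $\C^\times$ via its character on $\Ho^{3,1}$, i.e.\ $G_T\cong\mu_m$ is cyclic; splitting $T_\Q$ into $\Q[\mu_m]$-isotypic pieces and invoking irreducibility once more gives $T_\Q\cong\Q(\zeta_m)^{\oplus r}$, so $\varphi(m)\mid\mathrm{rk}\,T$.

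The crucial --- and hardest --- step is to show that a smooth cubic fourfold carrying an involution $\iota$ has middle $\Picard$ number at least $7$. Here one uses that every involution of $\PP^5$ is conjugate to $\mathrm{diag}(1^a,(-1)^b)$ with $(a,b)\in\{(5,1),(4,2),(3,3)\}$, so $X_3^\iota$ is the disjoint union of $X_3\cap\PP^{a-1}$ and $X_3\cap\PP^{b-1}$, each a cubic hypersurface or a linear subspace of the relevant coordinate space; computing Euler numbers ($-6$ for a cubic threefold, $9$ for a cubic surface, $0$ for a plane cubic, $d$ for $\PP^{d-1}$) and plugging into the topological Lefschetz formula $e(X_3^\iota)=4+\mathrm{tr}(\iota^*\mid\Ho^4)$ yields $\mathrm{rk}\,\Ho^4(X_3,\Q)^\iota\in\{7,15,11\}$ for the three types. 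Since $\Ho^{3,1}$ lies in one $\iota^*$-eigenspace, the other eigenspace is a pure $(2,2)$ sub-Hodge-structure of rank at least $7$, forcing the stated bound. (The relevant Betti numbers are deformation invariants of the family of cubic fourfolds admitting a fixed involution, so the estimate holds for all of them.) Consequently, when the middle $\Picard$ number is at most $2$, the group $\Aut(X_3)$ contains no involution.

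Now assemble the pieces. If the middle $\Picard$ number is one, then $S=0$, $\Ho^4(X_3,\Q)=\Q h^2\oplus T_\Q$, so by faithfulness $\Aut(X_3)=G_T=\mu_m$ with $\varphi(m)\mid 22$, i.e.\ $m\in\{1,2,3,4,6,23,46\}$; the absence of involutions kills the even values, leaving $m\in\{1,3,23\}$. The value $m=23$ is excluded by a lattice computation: a faithful $\mu_{23}$-action has characteristic polynomial $\Phi_{23}$ on $T$, making $T$ a rank-one $\Z[\zeta_{23}]$-lattice, i.e.\ a fractional ideal with bilinear form $\mathrm{Tr}_{\Q(\zeta_{23})/\Q}(\gamma x\bar y)$ for some $\gamma$ in the real subfield, whence $\disc T=\pm(\text{square})\cdot d_{\Q(\zeta_{23})}=\pm(\text{square})\cdot 23^{21}$ has odd $23$-adic valuation, contradicting $\disc T=3$. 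So $\Aut(X_3)\in\{1,\Z/3\Z\}$, and both values occur (a very general cubic fourfold, resp.\ a very general cyclic triple cover of $\PP^4$ branched along a cubic threefold). If the middle $\Picard$ number is two, then $\varphi(m)\mid 21$ forces $m\in\{1,2\}$; the case $m=2$ would produce an involution, which is excluded, so $\Aut(X_3)$ acts on $\Ho^4(X_3,\Q)$ only through $\pm1$ on the rank-one lattice $S_\Q$, and faithfulness together with the nonexistence of involutions gives $\Aut(X_3)=\{1\}$. The main obstacle throughout is the Lefschetz estimate of the third paragraph --- describing the fixed loci of all involutions of all smooth cubic fourfolds and turning their topology into a lower bound for the $\Picard$ number; the $m=23$ exclusion is a smaller, self-contained lattice-theoretic point.
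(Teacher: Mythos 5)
Your proof is correct, but the route is genuinely different from the paper's at the decisive step. The paper reduces to showing $\Aut(X_3)=\mu_m$ via Lemma~\ref{st} (which uses the discriminant groups $A_{\MP(X_3)}\cong A_{T(X_3)}$ and the technical Lemmas~\ref{lattice}, \ref{absurd}), and then excludes even $m$ by exactly the same discriminant-form device: an involution would have to act as $-\Id$ on $A_{T(X_3)}$ but as $\Id$ on $A_{\MP(X_3)}$, contradicting $A\cong\Z/3$ (in the rank-one case). You instead exclude involutions outright by classifying them as $\mathrm{diag}(1^a,(-1)^b)$ in $\mathrm{PGL}_6$, describing the fixed loci (cubic threefold and a point, cubic surface and a $\PP^1$, plane cubic and a $\PP^2$), and running the topological Lefschetz fixed-point formula to bound $\mathrm{rk}\,\MP(X_3)\geq 7$. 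Both arguments are valid; the paper's is slicker given that it has already built the discriminant-group machinery for Lemma~\ref{st}, whereas yours is more geometric and self-contained and has the side benefit of a numerical lower bound on the middle Picard number of any smooth cubic fourfold with an involution. You also replace the paper's citation of Gonz\'alez-Aguilera--Liendo for $m\neq 23$ by a direct lattice argument computing the $23$-adic valuation of $\disc T$ via the cyclotomic field discriminant, which is a clean and independently useful observation; the paper is content to cite. Note that your ``deformation invariance'' remark about the fixed-locus Euler numbers is really not needed --- since $X_3$ is smooth, $X_3^\iota$ is smooth (linearization), and the description of its components forces smooth complete intersections in each case, so the Euler numbers can be read off directly. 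Finally, for realizing $\Aut=\Z/3\Z$ you implicitly rely on the content of Proposition~\ref{pic} (that a very general triple cover $X_0^3+G(X_1,\dots,X_5)=0$ has middle Picard number one), as does the paper.
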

As a result, one can tell whether a smooth cubic fourfold has large middle Picard number from the symmetry of its defining equation.

Let us briefly describe the structure of this paper. In Section 2, we prove that some cohomology groups are torsion-free. We provide a relation between the crystalline cohomology and the \'etale cohomology of automorphisms of a hypersurface. Meanwhile, we use the Lefschetz fixed point theorem to show that (\ref{inj}) holds for a smooth cubic threefold and its Fano surface, see Theorem \ref{ffonfanos} and Corollary \ref{corinjcubic}

In Section 3, we show that some Hodge numbers of the Fano variety of lines $\Fa(X)$ of a cubic fourfold $X$ are zeros. The result is well-known by Hodge theory over complex numbers $\C$, see \cite{Bea2}. The proof we provide here is purely algebraic. As a corollary, we show that the N\'{e}ron-Severi group of $\Fa(X)$ is torsion-free.

In Sections 4, we use a criteria for lifting automorphisms of smooth projective varieties from positive characteristic to characteristic zero. We apply the lifting criteria to reduce Theorem \ref{mainthm} in positive characteristic to characteristic zero. It follows from the holomorphic symplectic structure on $\Fa(X_3)$ for a smooth cubic fourfold $X_3$ and the techniques of projective geometry that Theorem \ref{mainthm} holds in characteristic zero. 

In Section 5, we complete the proof of Theorem \ref{mainthm}. Namely, we show that Theorem \ref{mainthm} holds for Lefschetz cubics.

In Section 6, we use the theory of Milnor fibers and vanishing cycles to show that there is a smooth cubic fourfold of middle $\Picard$ number one with an automorphism of order $3$.

 In Section 7, we use the lattice theory to classify the automorphism group of a complex cubic fourfold whose middle Picard number is small.
\\

  \textbf{Acknowledgments.} The author is very grateful for Dr.~Hang Xue and Dr.~Jie Xia for exploring cubics with him in the end of his last year in Columbia University. The author thanks Professor~Luc~Illusie for his interest in this project. The author also acknowledge Prof.~Matt Kerr , Prof.~Radu Laza, Prof.~Roya Beheshti, Prof.~Adrian Clingher and Dr.~Qi You for some useful conversations. At the end, the author thanks Prof.~Johan de Jong for giving wonderful lectures on \'etale and crystalline cohomology couple years ago. This paper is dedicated to author's friend~Dr. Zhiyu Tian who encourages the author and shares his beautiful insight about math and the world.

\section{Automorphisms of Fano Surfaces} \label{s3}

\begin{lemm}\cite[Theorem 1.1, Theorem 1.3]{PB}\label{autlin}
Suppose that $X_d$ is a smooth hypersurface of degree $d$ in $\PP^{n+1}$ over an algebraically closed field $k$. If $n\geq 1$, $d\geq 3$, and $(n, d)$
is neither $(1, 3)$ nor $(2, 4)$, then $\Aut (X_d) = \Aut_L(X_d)$ where $\Aut_L(X_d)$ is the linear automorphism group. Moreover, it is a finite group.
\end{lemm}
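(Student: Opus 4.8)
The plan is to prove the two assertions in turn: first that every automorphism of $X_d$ is the restriction of a projective linear transformation of $\PP^{n+1}$, so that $\Aut(X_d)=\Aut_L(X_d)$, and then that $\Aut_L(X_d)\subseteq\mathrm{PGL}_{n+2}(k)$ is finite.

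For the first assertion the crux is that the polarization $\OO_{X_d}(1)$ is intrinsic to the abstract variety $X_d$, i.e. $\sigma^*\OO_{X_d}(1)\cong\OO_{X_d}(1)$ for every $\sigma\in\Aut(X_d)$; I would establish this in three ranges of $n$. If $n\geq 3$, the Lefschetz hyperplane theorem gives $\Pic(X_d)=\Z\cdot\OO_{X_d}(1)$ and $\sigma^*$ must fix the ample generator. If $n=2$, adjunction gives $K_{X_d}\cong\OO_{X_d}(d-4)$ with $d-4\neq 0$ (since $(n,d)\neq(2,4)$), and from $\sigma^*K_{X_d}\cong K_{X_d}$ together with the torsion-freeness of $\Pic(X_d)=\NS(X_d)$ (using $\Ho^1(X_d,\OO_{X_d})=0$) one deduces $\sigma^*\OO_{X_d}(1)\cong\OO_{X_d}(1)$. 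If $n=1$, then $X_d$ is a smooth plane curve of degree $d\geq 4$: for $d=4$ one has $K_{X_d}\cong\OO_{X_d}(1)$ and is done immediately, and for $d\geq 5$ one invokes the classical fact that a smooth plane curve of degree $\geq 5$ carries a \emph{unique} $g^2_d$, namely the system cut out by lines, so that any line bundle isomorphic to $\sigma^*\OO_{X_d}(1)$ — still of degree $d$ with three sections — must be $\OO_{X_d}(1)$. Granting $\sigma^*\OO_{X_d}(1)\cong\OO_{X_d}(1)$, $\sigma$ acts linearly on $V:=\Ho^0(X_d,\OO_{X_d}(1))$; the exact sequence $0\to\OO_{\PP^{n+1}}(1-d)\to\OO_{\PP^{n+1}}(1)\to\OO_{X_d}(1)\to 0$ and the vanishing $\Ho^1(\PP^{n+1},\OO(1-d))=0$ identify $V$ with $\Ho^0(\PP^{n+1},\OO(1))$, so $X_d\hookrightarrow\PP^{n+1}$ is the embedding defined by the complete linear system $|V|$; functoriality of this embedding then forces $\sigma$ to be the restriction of the linear automorphism of $\PP^{n+1}$ induced by the action of $\sigma$ on $V$, so $\Aut(X_d)=\Aut_L(X_d)$.

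For finiteness, I would present $\Aut_L(X_d)$ as the scheme-theoretic stabilizer in $\mathrm{PGL}_{n+2}$ of the point $[F]\in\PP\big(\Ho^0(\PP^{n+1},\OO(d))\big)$ defined by the equation of $X_d$; being a closed subgroup scheme, it is an algebraic group of finite type over $k$. Since $\Ho^0(\PP^{n+1},T_{\PP^{n+1}}(-d))=0$ for $d\geq 2$, the restriction map $\Ho^0(\PP^{n+1},T_{\PP^{n+1}})\to\Ho^0(X_d,T_{\PP^{n+1}}|_{X_d})$ is injective, and under it the Lie algebra of $\Aut_L(X_d)$ lands in the subspace of vector fields tangent to $X_d$, that is, in $\Ho^0(X_d,T_{X_d})$ (the kernel of $\Ho^0(X_d,T_{\PP^{n+1}}|_{X_d})\to\Ho^0(X_d,\OO_{X_d}(d))$ from the normal bundle sequence). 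A standard computation with that sequence and the restricted Euler sequence gives $\Ho^0(X_d,T_{X_d})=0$ whenever $d\geq 3$ and $(n,d)\neq(1,3)$; hence $\Aut_L(X_d)$ is a finite-type $k$-group scheme with trivial tangent space at the identity, so it is $0$-dimensional, hence finite.

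The step I expect to be the real obstacle is the curve case $n=1$: there $\Pic(X_d)$ is large and the adjunction trick determines $\OO_{X_d}(1)$ only modulo $(d-3)$-torsion in the Jacobian, so one genuinely needs the classification of low-degree linear series on smooth plane curves — the uniqueness of the $g^2_d$ for $d\geq 5$ — which is of a rather different nature from the hypersurface arguments used elsewhere. A secondary delicate point is the vanishing $\Ho^0(X_d,T_{X_d})=0$ in positive characteristic, where one must check that the Euler-sequence bookkeeping is not spoiled by small primes; for both of these points I would rely on the analysis of \cite{PB}.
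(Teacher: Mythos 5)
Since the paper gives no proof of this lemma---it is quoted directly from Poonen \cite{PB}, Theorems 1.1 and 1.3---there is no in-house argument to compare your proposal against. What you sketch is the classical Matsumura--Monsky route: show that $\OO_{X_d}(1)$ is intrinsic to the abstract variety so that every automorphism extends to a linear one, then kill $\Ho^0(X_d,T_{X_d})$ to make the stabilizer group scheme zero-dimensional, hence finite. That is indeed the shape of the argument in \cite{MM} and \cite{PB}, and the cases you single out as hard (plane curves of degree $\geq 5$, and small characteristics) are precisely where those papers do real work.

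Two steps as written do not close. First, in the $n=2$ case the inference from $\Ho^1(X_d,\OO_{X_d})=0$ to torsion-freeness of $\Pic(X_d)$ is a non sequitur: vanishing of $\Ho^1(\OO)$ kills $\Pic^0$, giving $\Pic=\NS$, but says nothing about torsion in $\NS$. Torsion-freeness of $\Pic$ for a smooth surface in $\PP^3$ over an algebraically closed field of arbitrary characteristic is true, but its prime-to-$p$ part comes from $\pi_1^{\text{\'et}}(X_d)=1$ (Lefschetz for $\pi_1$ in dimension $\geq 2$), and the $p$-primary part needs a separate flat-cohomology or lifting-to-characteristic-zero argument; you need to invoke one of these, not just $\Ho^1(\OO)=0$. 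Second---and you flag this yourself---$\Ho^0(X_d,T_{X_d})=0$ over a field of characteristic $p$ is not ``a standard computation'' one can leave implicit: the Euler/conormal bookkeeping genuinely requires care when $p$ is small relative to $n$ and $d$, and checking it in full generality is a nontrivial part of \cite{MM} and \cite{PB}; deferring both sticky points to \cite{PB} is, for a blind proof of a statement cited from \cite{PB}, circular. For what it is worth, the present paper only invokes the lemma for cubic threefolds and fourfolds ($n=3,4$, $d=3$), where $\Pic(X_d)=\Z\cdot\OO_{X_d}(1)$ by Grothendieck's Lefschetz theorem and the $\Ho^0(T_X)=0$ computation is painless, so none of the delicate cases you worry about actually arise in this application.
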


\begin{lemm} \label{idladic}
Let $X_d$ be a smooth hypersurface of degree $d$ in $\PP^{n+1}$ over an algebraically closed field $k$ of positive characteristic, and $f$ be a linear automorphism of $X_d$. If $n\geq 1$, $d\geq 3$, and $(n, d)$
is neither $(1, 3)$ nor $(2, 4)$, then the following are equivalent:
\begin{enumerate}
\item $f^*=\Id$ on $\et^i(X_d,\Ql)$. 
\item $f^*=\Id$ on $\cris^i(X_d/W)_K$ where $K$ is the fraction field of the Witt ring $W=W(k)$.
\end{enumerate}

\end{lemm}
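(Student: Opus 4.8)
The strategy is to pass through a common "universal" or "generic" object that both cohomology theories see, exploiting the fact that $X_d$ and $f$ are defined over a subring of $k$ of finite type over the prime field, together with comparison theorems. Concretely, since $f$ is a linear automorphism of a smooth hypersurface, both $X_d$ and $f$ are defined over a finitely generated $\Z$-subalgebra $R \subset k$; after localizing we may assume $R$ is smooth over $\Z$ and that the hypersurface is smooth over $\Sp(R)$ with $f$ acting fiberwise. Choosing a closed point of $\Sp(R)$ in characteristic zero and a lift of our given geometric point, we obtain a smooth hypersurface $X_d^{\C}$ over $\C$ (or over $\overline{\Q}$) carrying a linear automorphism $f^{\C}$ reducing to $(X_d,f)$. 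Smooth proper base change gives $f^*$-equivariant isomorphisms between $\et^i(X_d,\Ql)$, $\et^i(X_d^{\overline{\Q}},\Ql)$, and the $l$-adic cohomology of the geometric generic fiber; similarly the crystalline cohomology at the chosen positive-characteristic point sits in a smooth proper family. So it suffices to show: $f^*=\Id$ on $\et^i(X_d,\Ql)$ in characteristic $p$ if and only if $f^*=\Id$ on the characteristic-zero $l$-adic cohomology of the lift, and the analogous statement comparing crystalline cohomology in characteristic $p$ with de Rham (equivalently Betti) cohomology of the lift.

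The key point making all of this work is that the trace of $f^*$ on each of these cohomology groups is an \emph{integer independent of the cohomology theory}: by the Lefschetz trace formula the alternating sum computes the number of fixed points, but more precisely one wants the individual Betti-number-graded traces. Here is where I would invoke the structure of hypersurface cohomology: for a smooth hypersurface of degree $d$ in $\PP^{n+1}$ with $(n,d) \ne (1,3),(2,4)$, the primitive cohomology $\et^n_{\mathrm{prim}}$ is an irreducible representation consideration is not needed — rather, the relevant fact is that $f^*$ acts on $\et^n(X_d,\Ql)$ through a finite-order linear automorphism whose characteristic polynomial has \emph{integer} (indeed, it is a product of cyclotomic-type) coefficients that can be computed from the action of $f$ on the Jacobian ring $S/J(F)$ via Griffiths residues. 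Since the Jacobian-ring description is purely algebraic and independent of $l$ and of the characteristic (the dimensions of the graded pieces are the Hodge numbers, which are constant in the family by the hypotheses ensuring no jumping — this is exactly the kind of torsion-freeness input the paper sets up in Section 2), the characteristic polynomial of $f^*$ on $\et^n$ is literally the same polynomial whether computed $l$-adically in characteristic $p$, $l$-adically in characteristic $0$, or crystalline/de Rham in either characteristic. Then $f^*=\Id$ is equivalent to this characteristic polynomial being $(T-1)^{b_n}$, a condition visibly independent of the cohomology theory; the cohomology in degrees $\ne n$ is spanned by powers of the hyperplane class, on which any automorphism acts trivially, so those degrees contribute nothing.

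Thus the proof reduces to two assertions: (i) for a fixed linear automorphism $f$ over a fixed smooth proper family, the action on $\et^i$ of nearby fibers is "locally constant" — which is immediate from the smooth proper base change theorem, giving $f^*$-equivariant isomorphisms — and (ii) the comparison between crystalline cohomology (with its $f^*$-action) in characteristic $p$ and de Rham cohomology of a characteristic-zero lift, which follows from Berthelot–Ogus comparison applied to a lift over $W$ or over a smooth $\Z$-algebra, again compatibly with $f^*$; and finally de Rham versus $l$-adic in characteristic zero via the standard comparison isomorphisms, all functorial in $f$. Assembling these, each of conditions (1) and (2) is equivalent to the single statement "the characteristic polynomial of $f^*$ on $\et^n(X_d^{\C},\Ql)$ equals $(T-1)^{b_n}$," hence to each other.

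The main obstacle I anticipate is verifying the \emph{$f^*$-equivariance} of the crystalline-to-de Rham comparison cleanly in the needed generality: one must either produce a lift of the pair $(X_d,f)$ over $W(k)$ — which is easy for a linear automorphism of a hypersurface, since one lifts the defining equation and the linear action by hand — or work with the filtered $F$-isocrystal directly and cite Berthelot–Ogus functorially. The "easy lift" route is the most transparent: write $F \in k[x_0,\dots,x_{n+1}]$ and $f \in \mathrm{GL}_{n+2}(k)$, lift both entry-by-entry to $W(k)$ (lifting the finitely many eigenvalue-relations is harmless since $f$ has finite order, so its matrix is diagonalizable with roots-of-unity eigenvalues that lift canonically via Teichmüller), check the lifted hypersurface is still smooth (open condition, automatic for the Witt lift of a smooth fiber), and then Berthelot–Ogus plus the de Rham comparison over $K$ do the rest. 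Everything else is bookkeeping with base-change isomorphisms.
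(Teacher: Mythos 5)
The approach in the paper is a two-line argument: since $f$ has finite order (Lemma \ref{autlin}), $f^*$ is diagonalizable on $\et^i(X_d,\overline{\Ql})$ and on $\cris^i(X_d/W)_{\overline{K}}$ (both coefficient fields have characteristic zero), so $f^*=\Id$ on each is equivalent to the characteristic polynomial being $(1-t)^{b_i}$; and those characteristic polynomials coincide by Katz--Messing and Illusie's comparison theorem, applied directly in characteristic $p$. No lift to characteristic zero is needed.

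Your plan attempts to reach the same conclusion by passing through a characteristic-zero model, but the central maneuver does not work as stated. You propose to spread $(X_d,f)$ out over a finitely generated $\Z$-subalgebra $R\subset k$ and then specialize to a characteristic-zero closed point of $\Sp(R)$. However, $k$ has positive characteristic $p$, so any subring $R\subset k$ is an $\mathbb{F}_p$-algebra: $\Sp(R)$ is a scheme purely in characteristic $p$ and has no characteristic-zero points at all. You cannot get a characteristic-zero fiber by spreading out \emph{inside} $k$; you would have to \emph{lift} $(X_d,f)$ to a mixed-characteristic base, which is a deformation problem, not a spreading-out problem.

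Your ``easy lift'' alternative is meant to address exactly this, but the argument has a genuine gap. You claim that since $f$ has finite order its matrix is diagonalizable with roots-of-unity eigenvalues that lift by Teichm\"uller. That is true only when $p\nmid\mathrm{ord}(f)$. In characteristic $p$ a finite-order linear automorphism can be non-semisimple (a $p$-cycle permutation matrix, for instance, is a single Jordan block with all eigenvalues $1$), and such automorphisms of smooth hypersurfaces do occur (Fermat hypersurfaces have $S_{n+2}$ inside their automorphism group). In that situation there is no eigenbasis, the Teichm\"uller lift argument does not apply, and one can even check that for $p\geq 5$ a $2\times 2$ Jordan block of order $p$ over $\mathbb{F}_p$ admits no finite-order lift to $\mathrm{GL}_2(\Z_p)$. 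More to the point, lifting the pair $(X_d,f)$ over $W$ is a nontrivial deformation problem: the paper itself has to develop a general lifting criterion (Theorem \ref{corolifting}, requiring the action to preserve the Hodge filtration and an infinitesimal Torelli input) precisely because such a lift is not automatic. Your proposal assumes what that machinery was built to prove. The side remarks about the Jacobian ring and Griffiths residues are also tangential; once one has an $f^*$-equivariant comparison of characteristic polynomials between the $\ell$-adic and crystalline theories, nothing further about the structure of hypersurface cohomology is needed, and that comparison is exactly what Katz--Messing supplies directly in characteristic $p$ without any lifting at all.
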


\begin{proof}
By the previous lemma, we know the order of $f$ is finite. Therefore, the induced map $f^*$ on the cohomology group \[\et^i(X_d,\overline{\Ql})\text{~(resp.~}\cris^i(X_d/W)_{\overline{K}}\text{)} \]is diagonalizable. In particular, the map $f^*$ is identity on the cohomology group if and only if all the eigenvalues of $f^*$ are one. On the other hand, we have
\[\det(\Id-f^*t,\cris^i(X_d/W)_{K})=\det(\Id-f^*t,\et^i(X_d,\Ql)),\]
see \cite[Theorem 2]{KM} and \cite[3.7.3 and 3.10]{Ill1}. We have proved the lemma.
\end{proof}
The following lemma is well-known.
\begin{lemm} \label{torsionfree}
Let $X_d$ be a smooth hypersurface in $\PP^{n+1}$ over $\C$. Then, the singular cohomology $\mathrm{H}^i_{sing}(X_d,\Z)$ is torsion-free.

\end{lemm}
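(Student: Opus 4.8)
The plan is to use the Lefschetz hyperplane theorem together with Poincaré duality and the universal coefficient theorem to reduce the torsion-freeness of $\mathrm{H}^i_{sing}(X_d,\Z)$ to known facts about the cohomology of projective space. First I would recall that $X_d$ is a smooth complex hypersurface of dimension $n$ in $\PP^{n+1}$, and that by the Lefschetz hyperplane theorem the restriction map $\mathrm{H}^i_{sing}(\PP^{n+1},\Z)\to \mathrm{H}^i_{sing}(X_d,\Z)$ is an isomorphism for $i<n$ and injective for $i=n$. Since $\mathrm{H}^i_{sing}(\PP^{n+1},\Z)$ is $\Z$ for $i$ even in the range $0\le i\le 2n+2$ and $0$ for $i$ odd, it follows immediately that $\mathrm{H}^i_{sing}(X_d,\Z)$ is torsion-free (indeed free of rank at most one) for all $i<n$.

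Next I would handle the range $i>n$ by Poincaré duality: since $X_d$ is a smooth compact oriented manifold of real dimension $2n$, we have $\mathrm{H}^i_{sing}(X_d,\Z)\cong \mathrm{H}_{2n-i}^{sing}(X_d,\Z)$, and by the universal coefficient theorem the torsion subgroup of $\mathrm{H}_{2n-i}^{sing}(X_d,\Z)$ is isomorphic to the torsion subgroup of $\mathrm{H}^{2n-i+1}_{sing}(X_d,\Z)$. For $i>n$ we have $2n-i+1\le n$, so this latter group is torsion-free by the previous paragraph, hence $\mathrm{H}^i_{sing}(X_d,\Z)$ is torsion-free for all $i>n$ as well (here one should also note $i\le 2n$, as the cohomology vanishes above the real dimension).

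It remains to treat the middle cohomology $\mathrm{H}^n_{sing}(X_d,\Z)$, and this is the only real obstacle. The universal coefficient theorem gives that the torsion of $\mathrm{H}^n_{sing}(X_d,\Z)$ equals the torsion of $\mathrm{H}_{n-1}^{sing}(X_d,\Z)$, which in turn (again by universal coefficients for cohomology, or by Poincaré duality) equals the torsion of $\mathrm{H}^{n+1}_{sing}(X_d,\Z)\cong \mathrm{H}_{n-1}^{sing}(X_d,\Z)$; but more directly, the torsion of $\mathrm{H}_{n-1}^{sing}(X_d,\Z)$ is isomorphic to the torsion of $\mathrm{H}^{n-1}_{sing}(X_d,\Z)$, which we have just shown is $0$ since $n-1<n$. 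Therefore $\mathrm{H}^n_{sing}(X_d,\Z)$ is torsion-free, completing the proof. The key input making everything work is that the Lefschetz hyperplane theorem controls all cohomology strictly below the middle degree, and Poincaré duality plus universal coefficients then propagate torsion-freeness to the middle and upper degrees.
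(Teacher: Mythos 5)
Your overall strategy (Lefschetz hyperplane theorem below the middle degree, then Poincar\'e duality and universal coefficients for the rest) is the same as the paper's, but there is a genuine error in your treatment of the middle cohomology $\mathrm{H}^n_{sing}(X_d,\Z)$. You assert that ``the torsion of $\mathrm{H}_{n-1}^{sing}(X_d,\Z)$ is isomorphic to the torsion of $\mathrm{H}^{n-1}_{sing}(X_d,\Z)$.'' This is false: the universal coefficient theorem identifies the torsion of $\mathrm{H}^{n-1}(X_d,\Z)$ with the torsion of $\mathrm{H}_{n-2}(X_d,\Z)$, not with the torsion of $\mathrm{H}_{n-1}(X_d,\Z)$. (A quick sanity check: for $\mathbb{RP}^2$ one has $\mathrm{H}_1 = \Z/2$ but $\mathrm{H}^1 = 0$.) Your alternative chain via Poincar\'e duality, $\mathrm{H}_{n-1}(X_d,\Z)_{\mathrm{tor}} \cong \mathrm{H}^{n+1}(X_d,\Z)_{\mathrm{tor}}$, is correct as an isomorphism of groups but circular as an argument, since $\mathrm{H}^{n+1}$ has not yet been handled. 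The error also infects the boundary case $i=n+1$ of your third paragraph, where $2n-i+1=n$ and you appeal to a degree not covered by your first paragraph.

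The gap is exactly where the paper's phrase ``Lefschetz hyperplane theorem for homology \emph{and} cohomology'' does its work. You only invoked the cohomological version. The fix is to also use the homological version: the map $\mathrm{H}_i(X_d,\Z)\to\mathrm{H}_i(\PP^{n+1},\Z)$ is an isomorphism for $i<n$, so in particular $\mathrm{H}_{n-1}(X_d,\Z)$ is torsion-free (being $\Z$ or $0$). Then the universal coefficient theorem gives $\mathrm{H}^n(X_d,\Z)_{\mathrm{tor}}\cong\mathrm{H}_{n-1}(X_d,\Z)_{\mathrm{tor}}=0$, closing both the middle case and the $i=n+1$ boundary. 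With this single correction the rest of your argument goes through.
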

\begin{proof}
It is clear that we have
\[\Ho_{i}(X_d,\Z)_{\mathrm{tor}}=\Ho^{n-i}_{sing}(X_d,\Z)_{\mathrm{tor}}\]by Poincar\'e duality. Using the Lefschetz hyperplane theorem for homology and cohomology, we conclude that the cohomology group $\mathrm{H}^i_{sing}(X_d,\Z)$ is torsion-free.

\end{proof}

\begin{lemm} \label{cristorsionfree}
Let $X_d$ be a smooth hypersurface in $\PP^{n+1}$ over an algebraically closed field $k$ of positive characteristic. Then, the crystalline cohomology $\cris^i(X_d/W)$ is torsion-free.
\end{lemm}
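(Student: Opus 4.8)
The plan is to deduce torsion-freeness of $\cris^i(X_d/W)$ from the corresponding result in characteristic zero (Lemma \ref{torsionfree}) together with the Hodge--de Rham degeneration and a lifting argument. First I would recall that a smooth hypersurface $X_d \subset \PP^{n+1}_k$ always lifts to a smooth hypersurface $\mathcal{X}_d \subset \PP^{n+1}_W$ over the Witt ring $W = W(k)$: one simply lifts the defining homogeneous polynomial coefficient-by-coefficient to $W$, and smoothness is an open condition, so after possibly adjusting the lift (or noting that the discriminant is a unit in $W$ when $X_d/k$ is smooth and the lift is chosen appropriately) we get a smooth projective morphism $\mathcal{X}_d \to \Spec W$ with special fiber $X_d$. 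Then the comparison theorem identifies $\cris^i(X_d/W)$ with the algebraic de Rham cohomology $\mathrm{H}^i_{\DR}(\mathcal{X}_d/W)$.

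The key structural input is that $\mathcal{X}_d \to \Spec W$ has the property that the Hodge cohomology groups $\Ho^j(\mathcal{X}_d, \Omega^{i-j}_{\mathcal{X}_d/W})$ are all free $W$-modules, and that the Hodge--de Rham spectral sequence degenerates. For hypersurfaces this can be seen very concretely: the Hodge numbers $h^{p,q}$ of a smooth hypersurface depend only on $n$ and $d$ (they are constant in the family, computable from the Jacobian ring / Griffiths residues), so by the semicontinuity theorem and constancy of Euler characteristics, $R^q \pi_* \Omega^p_{\mathcal{X}_d/W}$ is locally free of the expected rank. Degeneration of Hodge--de Rham then follows either from the classical argument over the generic fiber plus flatness, or directly from Deligne--Illusie in the relevant range, but in fact one does not even need degeneration in general: it suffices that $\mathrm{H}^i_{\DR}(\mathcal{X}_d/W)$ is a finitely generated $W$-module whose formation commutes with base change and whose rank over $K = \mathrm{Frac}(W)$ equals the rank over the residue field $k$.

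Granting that, torsion-freeness is immediate: $W$ is a complete DVR, $M := \cris^i(X_d/W) \cong \mathrm{H}^i_{\DR}(\mathcal{X}_d/W)$ is a finitely generated $W$-module, so $M \cong W^r \oplus (\text{torsion})$. Base change to the generic point gives $\dim_K (M \otimes_W K) = r$, and this equals $\dim_K \mathrm{H}^i_{\DR}(\mathcal{X}_{d,K}/K) = b_i(X_{d,\C})$ by the characteristic-zero comparison. Base change to the closed point gives $\dim_k (M \otimes_W k) \geq r$, with equality iff the torsion vanishes; but $M \otimes_W k$ surjects onto (and under the degeneration statement equals) $\mathrm{H}^i_{\DR}(X_d/k)$, whose dimension is again the sum of the Hodge numbers $\sum_{p+q=i} h^{p,q}$, equal to $b_i$. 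So the rank over $k$ equals $r$, forcing the torsion submodule to be zero.

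The main obstacle is the control of the special fiber: showing $\dim_k \mathrm{H}^i_{\DR}(X_d/k) = \dim_K \mathrm{H}^i_{\DR}(\mathcal{X}_{d,K}/K)$, equivalently that there is no extra cohomology appearing in characteristic $p$. For hypersurfaces this is safe because the Hodge cohomology is computed by an explicit Koszul-type complex (the Griffiths--Dwork description via the Jacobian ideal) that is flat over the base, so the Hodge numbers are genuinely constant and Hodge--de Rham degenerates for dimension reasons ($\sum h^{p,q} = b_i$ holds over $K$ and the inequality $\dim_k \mathrm{H}^i_{\DR}(X_d/k) \le \sum h^{p,q}$ from the spectral sequence together with $\dim_k \ge \dim_K$ from semicontinuity pins everything down). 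I would present this last point carefully, citing the concrete computation of hypersurface cohomology, since it is the place where the special structure of hypersurfaces (as opposed to arbitrary smooth projective varieties, where $\cris^i$ can have torsion) is actually used.
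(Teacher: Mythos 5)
Your proposal follows essentially the same route as the paper: lift $X_d$ to a smooth hypersurface over $W$, use the constancy of Hodge numbers and Hodge--de Rham degeneration (the paper cites SGA~7, Exp.~IX, Thm.~1.5 for this rather than re-deriving it from the Jacobian-ring description) to get $\dim_k \Ho^i_{\DR}(X_d/k) = \dim_K \Ho^i_{\DR}(\overline{X_d}_K/K)$, and then conclude torsion-freeness by comparing ranks over $k$ and $K$ (the paper phrases this through the crystalline universal coefficient sequence, which is equivalent to your DVR structure-theorem argument). The approaches are the same; only the packaging differs.
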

\begin{proof}
We lift $X_d$ to a smooth hypersurface $\overline{X_d}$ over the Witt ring $W(k)=W$ of $k$. By \cite[Exp.IX Thm.1.5]{SGA7}, the Hodge-to-de Rham spectral sequence degenerates and the Hodge numbers are invariant, i.e., $h^{p,q}(X_d)=h^{p,q}((\overline{X_d})_K)$. We conclude that
\[\ddim_k\Ho^*_{\DR}(X_d/k)=\ddim_K\Ho^*_{\DR}(\overline{X_d}/K).\]
 According to the universal coefficient theorem of crystalline cohomology(\cite{Ber}), we have
\begin{equation}\label {uc}
0\rightarrow \cris^i({X_d}/W)\otimes_W k\rightarrow \Ho_{\DR}^i({X_d}/k)\rightarrow Tor_1^W(\cris^{i+1}({X_d}/W),k) \rightarrow 0.
\end{equation}
Since $\ddim_k \cris^i({X_d}/W)\otimes_W k\geq \ddim_K\cris^i({X_d}/W)_K$, we have
\[\ddim_k\Ho_{\DR}^i(X_d/k)\geq \ddim_k \cris^i({X_d}/W)\otimes k\geq  \ddim_K\cris^i({X_d}/W)_K.\]
Moreover, it follows from the comparison theorem that $\Ho^*_{\DR}(\overline{X_d}/K)=\cris^*(X_d/W)_K$. Therefore, the inequalities above are equalities.
In particular, we have \[\mathrm{Tor}_1^W(\cris^{i+1}(X/W),k)=0\]for every $i$. In other words, the cohomology group $\cris^{i+1}(X/W)$ is torsion-free.
\end{proof}

\begin{lemm}
Let $S$ be the Fano surface of a smooth cubic threefold $X_3$ over an algebraically closed field $k$. Then, we have
\[\et^2(S,\Ql)=\wedge^2 \et^1(S,\Ql)\]
\end{lemm}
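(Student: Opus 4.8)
The plan is to first reduce to characteristic zero and then transport the question to the Albanese variety of $S$. For the reduction: lift $X_3$ to a smooth cubic threefold over the Witt ring $W(k)$; its relative Fano scheme of lines is smooth and proper over $W(k)$ with special fibre $S$, so by smooth proper base change the ring $\et^{*}(S,\Ql)$ — cup product included — is identified with the corresponding ring for a smooth cubic threefold in characteristic zero. It therefore suffices to treat $k=\C$. There $A:=\Alb(S)$ is an abelian variety of dimension $5$, identified by Clemens–Griffiths with the intermediate Jacobian $J(X_3)$ together with its principal polarization $\theta$, and the Albanese morphism $a\colon S\to A$ induces an isomorphism $a^{*}\colon\et^{1}(A,\Ql)\xrightarrow{\sim}\et^{1}(S,\Ql)$. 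Since $A$ is an abelian variety, cup product gives $\wedge^{2}\et^{1}(A,\Ql)=\et^{2}(A,\Ql)$, and as $a^{*}$ is a ring map which is an isomorphism in degree $1$, the cup-product map $\wedge^{2}\et^{1}(S,\Ql)\to\et^{2}(S,\Ql)$ is canonically identified with $a^{*}\colon\et^{2}(A,\Ql)\to\et^{2}(S,\Ql)$. Hence the lemma amounts to showing that $a^{*}$ is an isomorphism in degree $2$.

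Both spaces have dimension $45$: indeed $\dim\et^{2}(A,\Ql)=\binom{10}{2}$, and $\dim\et^{2}(S,\Ql)=45$ from the classical numerical invariants of the Fano surface ($q(S)=5$, $\chi(\OO_{S})=6$, $K_{S}^{2}=45$, hence $c_{2}(S)=27$ and $b_{2}(S)=45$). So it is enough to prove that $a^{*}$ is injective in degree $2$. I would do this by pushing forward: since $a$ is proper between smooth varieties with $\dim A-\dim S=3$, there is a Gysin map $a_{*}\colon\et^{2}(S,\Ql)\to\et^{8}(A,\Ql)$, and the projection formula gives $a_{*}a^{*}\gamma=\gamma\cup[S]_{A}$ for $\gamma\in\et^{2}(A,\Ql)$, where $[S]_{A}=a_{*}1\in\et^{6}(A,\Ql)$. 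By the Clemens–Griffiths theorem $[S]_{A}$ is the minimal cohomology class $\theta^{3}/3!$, and hard Lefschetz on the fivefold $A$ makes $\cup\,\theta^{3}\colon\et^{2}(A,\Ql)\to\et^{8}(A,\Ql)$ an isomorphism; so $a^{*}\gamma=0$ forces $\gamma=0$. Together with the dimension count this gives the lemma. (Alternatively, one can simply quote the computation of the rational cohomology ring of the Fano surface in Clemens–Griffiths, which already contains $\et^{2}=\wedge^{2}\et^{1}$.)

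The one substantial input is the identification of $[S]_{A}$ with $\theta^{3}/3!$ — equivalently, the surjectivity of $\wedge^{2}\et^{1}(S)\to\et^{2}(S)$, i.e. that $\et^{2}(S)$ is spanned by degree-one classes. Over $\C$ this is a genuine theorem of Clemens–Griffiths, proved via the Gauss map of the embedding $S\hookrightarrow A$ and a tangent-space/degeneration argument; a characteristic-$p$ proof avoiding the lift would instead need the analogous determination of the class of $S$ inside $J(X_3)$, which one would approach through the incidence variety $\mathcal{L}\subset S\times X_3$, using that the first projection is a $\PP^{1}$-bundle and the second is generically finite of degree $6$ onto $X_3$. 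Everything else — the Albanese isomorphism on $\et^{1}$, the Betti-number count, the Gysin and projection-formula bookkeeping, and hard Lefschetz on $A$ — is routine.
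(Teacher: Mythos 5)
Your proposal is correct, and its overall skeleton---lift the Fano scheme over $W(k)$, use smooth proper base change to reduce to $k=\C$, compare dimensions ($b_1=10$, $b_2=45$), then prove injectivity of the cup product---matches the paper's proof step for step. Where you diverge is in the argument for injectivity over $\C$: the paper simply cites Clemens--Griffiths [Lemma 9.13] as a black box, whereas you reroute through the Albanese map $a\colon S\to A=\Alb(S)$, identify the cup product $\bigwedge^2\et^1(S)\to\et^2(S)$ with $a^*\colon\et^2(A)\to\et^2(S)$ (valid because $a^*$ is a ring map, an isomorphism on $\et^1$, and $\et^*(A)$ is an exterior algebra), and then deduce injectivity of $a^*$ from the projection formula $a_*a^*\gamma=\gamma\cup[S]_A$ combined with the Poincar\'e formula $[S]_A=\theta^3/3!$ and hard Lefschetz. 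This is a more conceptual explanation of \emph{why} the injectivity holds---it pins it to the fact that $[S]_A$ is a Lefschetz class---but it is not a genuinely independent proof, since the Poincar\'e formula for $S$ inside $J(X_3)$ is itself a theorem of Clemens--Griffiths of comparable depth to the lemma the paper cites; both routes bottom out in [CG]. One small imprecision: your parenthetical claim that $[S]_A=\theta^3/3!$ is ``equivalent'' to surjectivity of the cup product is overstated---the Poincar\'e formula implies the surjectivity, but the converse is not immediate from what you write (any class $\xi$ with $\cup\,\xi$ injective on $\et^2(A)$ would do); this does not affect the argument, as you only use the forward implication. Your final remark about a hypothetical characteristic-$p$ proof via the incidence correspondence $\mathcal{L}\subset S\times X_3$ is a reasonable alternative sketch, but the paper sidesteps it exactly as you do, by specializing from the generic fibre over $W(k)$ using the constancy of $R^if_*\Ql$ over the strictly henselian base.
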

\begin{proof}
Over complex numbers, it is well-known that

\begin{enumerate} 
  \item $\ddim( \et^1(S,\Ql))=10$ and
  \item $\chi(S,\Ql)=1-10+45-10+1=27$, see \cite{Compo} and \cite[Corollary 9.5, (9.14)]{CG}.
\end{enumerate}

Assume $\chari(k)$ is positive. We lift $X_3$ to a smooth cubic $X$ over the Witt ring $W(k)$ of k. We have the relative Fano scheme \[f:\Fa\rightarrow \Sp( W(k))\] of lines of $X$ over $W(k)$. It is clear that $\Fa|_{\Sp(k)}=S$. It follows from the base change theorems that the same results (i) and (ii) above hold for S.

Therefore, we have that \[\ddim_{\Q_l}(\wedge^2 \et^1(S))=\ddim_{\Q_l}(\et^2(S)).\]To prove the lemma, it suffices to show that the cup product
\[\bigcup:\bigwedge^2 \et^1(S,\Ql) \rightarrow \et^2(S,\Ql)\] is injective. It follows from \cite[Lemma 9.13]{CG} that the cup product is injective when $k$ is of characteristic zero. Assume $\chari(k)$ is positive. From the construction of the cup product for $\acute{e}$tale cohomology \cite[VI.8]{Milne}, we have the relative cup-product
\[R^1f_*(\Ql)\times R^1f_*(\Ql) \rightarrow R^2f_*(\Ql).\]
Since $f$ is smooth and proper, the higher direct image $R^if_*(\Ql)$ is a lisse l-adic sheaf for every $i$, see \cite[XVI]{SGA4} and \cite[Arcata V2,3]{SGA45}. Since $W(k)$ is strictly henselian, the sheaves $R^if_*(\Ql)$ are constant. By the proper base change theorem, to prove that the cup product $\bigcup$ is injective over $k$, it suffices to show that it is injective over $K$ where $K$ is the fraction field of $W(k)$. The injectivity of the cup product $\bigcup$ over $K$ follows from \cite[Lemma 9.13]{CG}.\\

\end{proof}
\begin{thm}\label{ffonfanos}
Suppose that $S$ is the Fano surface of lines of a smooth cubic threefold over an algebraically closed field $k$. Let $l$ be a prime different from the characteristic of $k$. The natural map
\[j:\Aut(S)\rightarrow \Aut(\et^1(S,\Ql))\] is injective if $\chari(k)\neq 2$.
\end{thm}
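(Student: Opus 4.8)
The plan is to combine the identity $\et^2(S,\Ql)=\wedge^2\et^1(S,\Ql)$ just established with the Albanese map of $S$ and the Lefschetz fixed point formula. Let $f\in\Aut(S)$ be an automorphism with $f^*=\Id$ on $\et^1(S,\Ql)$; the goal is to show $f=\Id$. First I would observe that $f^*$ is then the identity on \emph{all} of $\et^*(S,\Ql)$: it is the identity on $\et^0$ and $\et^4$ since $S$ is connected; it equals $\wedge^2(f^*|_{\et^1})=\Id$ on $\et^2(S,\Ql)$ by the previous lemma; and it is the identity on $\et^3(S,\Ql)$ because the cup-product pairing $\et^1\times\et^3\to\et^4$ is perfect and $f^*$-equivariant, so $f^*$ on $\et^3$ is the inverse adjoint of $f^*$ on $\et^1$. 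Consequently the Lefschetz number of $f$ is
\[L(f)=\sum_i(-1)^i\mathrm{tr}(f^*\mid \et^i(S,\Ql))=\chi(S)=1-10+45-10+1=27.\]

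Next I would introduce the Albanese morphism $a\colon S\to\Alb(S)$, normalized at a base point $s_0$ by $a(s_0)=0$. Since $\ddim\et^1(S,\Ql)=10$ (by the base-change argument used in the previous lemma), $\Alb(S)$ is an abelian variety of dimension $5$ and $a^*\colon\et^1(\Alb(S),\Ql)\xrightarrow{\ \sim\ }\et^1(S,\Ql)$ is an isomorphism. This is the step where the hypothesis $\chari(k)\neq 2$ enters: the Albanese morphism of the Fano surface of a smooth cubic threefold is injective — indeed a closed immersion — which is classical over $\C$ (Clemens--Griffiths) and, in characteristic $\neq 2$, follows from the theory of the intermediate Jacobian of the cubic threefold, cf.\ \cite{CG}, \cite{Compo}. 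The automorphism $f$ induces the unique morphism $f_A\colon\Alb(S)\to\Alb(S)$ with $a\circ f=f_A\circ a$; writing $f_A=t_c\circ h$ with $h\in\Aut(\Alb(S))$ a group automorphism and $c=f_A(0)=a(f(s_0))$, and using that a translation acts trivially on $\et^1$ of an abelian variety, I get $h^*=(a^*)^{-1}\circ f^*\circ a^*=\Id$ on $\et^1(\Alb(S),\Ql)$. Since $\et^1(\Alb(S),\Ql)$ is the $\Ql$-dual of the rational Tate module $V_l(\Alb(S))$ and the natural map $\mathrm{End}(\Alb(S))\to\mathrm{End}_{\Ql}(V_l(\Alb(S)))$ is injective, $h-\Id$ acts trivially on $V_l(\Alb(S))$ and hence $h=\Id$. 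Thus $a\circ f=t_c\circ a$, that is, $a(f(s))=a(s)+c$ for every $s\in S$.

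It then remains to show $c=0$. If $c\neq 0$ then $a(f(s))=a(s)+c\neq a(s)$ for all $s$, so $f$ has no fixed point on $S$; but the Lefschetz fixed point formula for $l$-adic cohomology ($l\neq\chari(k)$) gives $L(f)=0$ in that case, contradicting $L(f)=27$ above. Therefore $c=0$, so $a\circ f=a$, and the injectivity of $a$ yields $f=\Id$. The main obstacle in this plan is the injectivity (or at least the generic injectivity) of the Albanese map of $S$ in positive characteristic: this is exactly where $\chari(k)\neq 2$ must be invoked, resting on the positive-characteristic theory of intermediate Jacobians of cubic threefolds. A minor point to keep straight is the direction in which the Lefschetz formula is used: only the implication ``no fixed points $\Rightarrow$ Lefschetz number zero'' is needed, so no computation of local terms is required (one may also note $\Aut(S)$ is finite since $S$ is of general type, though this is not strictly necessary here).
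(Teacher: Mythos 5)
Your proof is correct and follows essentially the same route as the paper: compute the Lefschetz number $27$, embed $S$ in its Albanese (which requires $\chari(k)\neq 2$), and invoke the faithfulness of the Tate module representation to kill the induced automorphism of $\Alb(S)$. The only difference is one of ordering: the paper applies the Lefschetz fixed point theorem first to produce a fixed point $s_0$ and then normalizes the Albanese map at $s_0$ so that the induced endomorphism of $\Alb(S)$ is automatically a group homomorphism, whereas you normalize at an arbitrary base point, decompose the induced map as translation composed with a homomorphism, dispose of the homomorphism part via Tate, and only then invoke Lefschetz to kill the translation; both are valid.
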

\begin{proof}
First, we claim an automorphism $g$ of $S$ has a fixed point if it acts trivially on the first l-adic cohomology group. From the previous lemma and Poincar$\acute{e}$ duality, it follows that $g$ acts trivially on $\et^i(S,\Ql)$ for all $i$. Suppose that $g$ has no any fixed point. By the Lefschetz fixed-point theorem, we have a contradiction as follows
\[0=\#(\mathrm{Fix}(g))=\sum\limits_{i=0}^4 (-1)^i Tr(g^*,\et^i(S,\Ql))=1-10+45-10+1=27.\]
Suppose that $s_0 \in S$ is a fixed point of $g:S\rightarrow S$. Since $\chari(k)\neq 2$, the natural morphism \[\lambda:S \rightarrow \Alb(S)\] is an immersion where $\Alb(S)$ is the Albanese variety of $S$, see \cite{Bea}. Moreover, we have \[\et^1(S,\Ql)=\et^1(\Alb(S),\Ql)\]via the pullback $\lambda^*$. Assume $\lambda(s_0)=0$. By the universal property of $\Alb(S)$, we have a morphism \[h:\Alb(S) \rightarrow \Alb(S)\] such that $\lambda\circ g=h\circ \lambda$ and $h(\lambda(s_0))=\lambda(s_0)$.
Therefore, the morphism $h$ is an automorphism of the group scheme $\Alb(S)$(\cite[Chapter 2]{mum}). Obviously, we have $h^*=\Id$ on $\et^1(\Alb(S),\Ql)$. By a theorem of Tate (\cite[Chapter IV]{mum}), we know $h=\Id_{\Alb(S)}$. It implies that $g=\Id_S$.
\end{proof}

\begin{lemm} \label{re}
With the assumptions and notations as above, we have a natural diagram as follows:
\begin{equation}\label{abjcoh}
\xymatrix{\Aut(X_3) \ar@{}[rd]|-{\square} \ar[r]^j \ar[d] & \Aut(S)\ar[d]\\
\Aut(\et^3(X_3,\Ql)) \ar@{=}[r] & \Aut(\et^1(S,\Ql))}
\end{equation}
where the identification at the bottom of the diagram is the Abel-Jacobi map, see \cite{CG}. 
Moreover, the morphism $j$ is injective.

\end{lemm}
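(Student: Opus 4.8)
The plan is to construct the homomorphism $j$ geometrically, to deduce commutativity of the square from the functoriality of the Abel--Jacobi correspondence, and to prove injectivity of $j$ by a direct argument intersecting lines on $X_3$.

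\emph{Setting up the maps.} Since an automorphism $\phi$ of $X_3$ carries lines on $X_3$ to lines on $X_3$, it acts on the Hilbert scheme of lines, and restricting to the component $S=\Fa(X_3)$ gives an automorphism $j(\phi)$ of $S$; this is functorial, so $j\colon\Aut(X_3)\to\Aut(S)$ is a group homomorphism. The two vertical arrows in (\ref{abjcoh}) are the tautological actions on $\ell$-adic cohomology, and the bottom identification $\et^3(X_3,\Ql)\cong\et^1(S,\Ql)$ is the Abel--Jacobi isomorphism of Clemens--Griffiths \cite{CG}; concretely it is (a Tate twist of) the map on cohomology attached to the algebraic cycle $[P]$, where $P=\{(\ell,x)\in S\times X_3 : x\in\ell\}$ is the universal incidence variety, and it persists in positive characteristic by lifting $X_3$ to $W(k)$ and invoking the smooth and proper base change theorems, exactly as in the lemma on $\et^2(S)$ above.

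\emph{Commutativity.} Write $\alpha\colon\et^3(X_3,\Ql)\xrightarrow{\sim}\et^1(S,\Ql)$ for the Abel--Jacobi isomorphism, so that the bottom row of (\ref{abjcoh}) is conjugation by $\alpha$. Since $x\in\ell$ forces $\phi(x)\in\phi(\ell)=j(\phi)(\ell)$, the automorphism $j(\phi)\times\phi$ of $S\times X_3$ preserves $P$, hence fixes the cycle class $[P]$; the standard functoriality of the action of correspondences on étale cohomology then gives $\alpha\circ\phi^{*}=j(\phi)^{*}\circ\alpha$, which is precisely the commutativity of (\ref{abjcoh}).

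\emph{Injectivity of $j$.} Suppose $j(\phi)=\Id_{S}$, so every line $\ell\subset X_3$ satisfies $\phi(\ell)=\ell$ as a subvariety. The projection $P\to X_3$ is finite of degree $6$ (a general point of $X_3$ lies on $6$ lines, by the Bézout computation of a conic against a cubic in $\PP(T_xX_3)$), so a general point $x\in X_3$ lies on at least two distinct lines $\ell_1\neq\ell_2$, and then $\ell_1\cap\ell_2=\{x\}$; since $\phi(x)\in\phi(\ell_1)\cap\phi(\ell_2)=\ell_1\cap\ell_2$, we get $\phi(x)=x$. As $\phi$ fixes a dense open subset of the variety $X_3$, it is the identity, so $j$ is injective. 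Together with the commutativity above and Theorem \ref{ffonfanos}, this immediately yields the injectivity of $\Aut(X_3)\to\Aut(\et^3(X_3,\Ql))$ when $\chari(k)\neq 2$, which is the use to be made of the lemma.

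\emph{The pullback property and the main obstacle.} Because the bottom identification in (\ref{abjcoh}) is an isomorphism and the right vertical arrow is injective by Theorem \ref{ffonfanos}, the square is Cartesian exactly when $j$ is an isomorphism; injectivity being settled, only surjectivity of $j$ remains, i.e. the statement that every automorphism of $S$ descends to an automorphism of $X_3$. I expect this to be the one genuinely non-elementary point: it is obtained by reconstructing $X_3$ from its Fano surface, using that $S$ embeds in $\Alb(S)$, that $\Alb(S)$ is the intermediate Jacobian with its principal polarization $\Theta$, that the class of $S$ is $\Theta^{3}/3!$, and then the Clemens--Griffiths Torelli theorem; the remaining parts of the argument are purely formal.
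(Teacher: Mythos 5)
Your proof is correct and follows essentially the same route as the paper: $j$ is defined by letting a (necessarily linear) automorphism act on lines, commutativity is a formal consequence of the fact that $j(\phi)\times\phi$ preserves the incidence variety $P$ (the paper phrases this via $j(g)^{*}q_{*}p^{*}=q_{*}h^{*}p^{*}=q_{*}p^{*}g^{*}$ in the proof of Lemma~\ref{fre}, while you phrase it through the cycle class $[P]$ and functoriality of correspondences — same content), and injectivity of $j$ comes from the fact that a point is cut out by two of the lines through it. Two small remarks: $P\to X_3$ is only \emph{generically} finite of degree $6$ (it can have positive-dimensional fibers over Eckardt points), but your ``general point plus density'' argument is unaffected and is in fact a bit more careful than the paper's blanket claim that every point lies on two lines; and your final paragraph on surjectivity of $j$/Cartesianness is extraneous, since neither the statement of the lemma nor its use in Corollary~\ref{corinjcubic} requires it.
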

\begin{proof}
Let $g$ be an automorphism of $X_3$. By Lemma \ref{autlin}, the automorphism $g$ of $X_3$ is linear. Therefore, the automorphism $g$ maps lines to lines. It gives rise to a natural morphism $j:\Aut(X_3)\rightarrow \Aut(S)$. 

The proof of the commutativity of the diagram is similar to the proof of Lemma \ref{fre}, so we postpone the proof to Section 5. On the other hand, for a point $p\in X_3$, there are at least two lines $l_1$ and $l_2$ passing through $p$ (i.e., $p=l_1 \cap l_2$). If $j(g)$ is the identity, then we have $$g(p)=j(g)(l_1)\cap j(g)(l_2)=l_1\cap l_2=p.$$It follows that the map $g$ is the identity.
\end{proof}

\begin{cor} \label{corinjcubic}
Let $X_3$ be a smooth cubic threefold over an algebraically closed field $k$. Suppose that $l$ is a prime different from the characteristic of $k$. Then the automorphism group $\Aut(X_3)$ acts on $\et^3(X_3,\Ql)$ faithfully.
\end{cor}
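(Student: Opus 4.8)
The plan is to reduce the statement, via the Abel--Jacobi correspondence, to the faithfulness of $\Aut(S)$ on $\et^1(S,\Ql)$ proved in Theorem~\ref{ffonfanos}, where $S=\Fa(X_3)$ is the Fano surface of lines of $X_3$. First I would recall that, by Lemma~\ref{autlin}, every automorphism $g$ of a smooth cubic threefold is linear and hence sends lines to lines, producing the homomorphism $j\colon\Aut(X_3)\to\Aut(S)$ of Lemma~\ref{re}; this $j$ is injective because through every point of $X_3$ there pass at least two lines, so a $g$ with $j(g)=\Id_S$ fixes every point of $X_3$. The square~\eqref{abjcoh} then shows that if $g^*=\Id$ on $\et^3(X_3,\Ql)$, then $j(g)^*=\Id$ on $\et^1(S,\Ql)$, the bottom identification being the $\Aut$-equivariant Abel--Jacobi isomorphism of \cite{CG}. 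When $\chari(k)\neq 2$, Theorem~\ref{ffonfanos} forces $j(g)=\Id_S$, and injectivity of $j$ then gives $g=\Id_{X_3}$, which is the desired faithfulness.

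The point not covered by this formal argument is characteristic $2$, which I expect to be the main obstacle: Theorem~\ref{ffonfanos} invokes the theorem of \cite{Bea} that the Albanese morphism $\lambda\colon S\to\Alb(S)$ is an immersion, stated only for $\chari(k)\neq 2$. Most of the argument nevertheless survives in characteristic $2$. From $\et^2(S,\Ql)=\wedge^2\et^1(S,\Ql)$ and Poincar\'e duality one still gets $j(g)^*=\Id$ on every $\et^i(S,\Ql)$, so the Lefschetz fixed-point formula (with $\chi(S)=1-10+45-10+1=27\neq 0$) produces a fixed line $\ell_0\in S$; the induced endomorphism $h$ of the abelian variety $\Alb(S)$ fixes $\lambda(\ell_0)$ and acts trivially on $\et^1(\Alb(S),\Ql)$, hence $h=\Id_{\Alb(S)}$ by Tate's theorem. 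The only missing input is then that $\lambda$ is injective on $k$-points in characteristic $2$; granting this, $\lambda\circ j(g)=\lambda$ forces $j(g)=\Id_S$ since $S$ is reduced and $k$ is algebraically closed, and one concludes as before.

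To close the characteristic-$2$ gap I would either extract injectivity of $\lambda$ on $k$-points from the Clemens--Griffiths analysis of incidence correspondences on the Fano surface (a statement less delicate than the differential injectivity excluded in characteristic $2$), or simply note that $\chari(k)=2$ is an instance of the smooth case of Theorem~\ref{mainthm}, which Sections~4--5 establish by lifting automorphisms to characteristic zero. In summary, the characteristic $\neq 2$ statement is essentially a diagram chase through results already in hand, while the real work is supplying a characteristic-$2$ substitute for the immersion property of the Albanese map.
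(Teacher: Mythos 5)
Your argument for $\chari(k)\neq 2$ is correct and is the same as the paper's: invoke Lemma~\ref{re} (the Abel--Jacobi square~\eqref{abjcoh} together with injectivity of $j$) and then Theorem~\ref{ffonfanos}. You also correctly locate the obstruction in characteristic $2$, namely that the immersion property of the Albanese map $\lambda\colon S\to\Alb(S)$ from \cite{Bea} is only available away from characteristic $2$.

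However, both of your proposed ways to close the characteristic-$2$ gap fail. The first (extracting pointwise injectivity of $\lambda$ from the Clemens--Griffiths incidence analysis) is speculative and unsubstantiated: \cite{CG} works over $\C$, and you give no argument that the needed set-theoretic injectivity survives to characteristic $2$. The second is outright circular and also factually wrong about the structure of the paper: Sections~4--5 establish the lifting argument only for \emph{smooth cubic fourfolds} (Theorem~\ref{mainthmmap}, via the symplectic structure on $\Fa(X_3)$ and the deformation-invariance argument of Lemma~\ref{fre}) and for \emph{Lefschetz nodal cubics with $\chari(k)\neq 2,3$} (Theorem~\ref{node}); neither covers smooth cubic threefolds in characteristic $2$. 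Appealing to Theorem~\ref{mainthm} itself would be circular, since Corollary~\ref{corinjcubic} is a constituent of its proof. The paper does not fill this gap internally either: its proof of the corollary reads, for $\chari(k)=2$, simply ``we refer to \cite{PAN},'' i.e., the companion paper on complete intersections. So the honest statement is that the $\chari(k)\neq 2$ part is a routine diagram chase from what is already in hand, while the characteristic-$2$ case genuinely requires external input (a lifting argument adapted to threefolds, or a characteristic-free substitute for Beauville's immersion theorem), none of which your proposal actually supplies.
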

\begin{proof}
Assume $\chari(k)\neq 2$. The corollary follows from the previous lemma and Theorem \ref{ffonfanos}. For $\chari(k)=2$, we refer to \cite{PAN}.
\end{proof}
\section {The First de Rham Cohomology Groups}
Our main result of this section is Proposition \ref{nsgroup}. The proof is tedious. We suggest the reader to skip it for the first reading.

Suppose that $X_3$ is a smooth cubic fourfold in $\PP^{5}$ over an algebraically closed field k. Denote by $F(X_3)$ the Fano variety of lines of $X_3$. We use the notations following \cite{Compo}. 

Let $V$ be a vector space of dimension $6$ over $k$. Denote by $P$ the projective space $\PP(V)=\PP^5$. Let $G$ be the Grassmannian $\textbf{Grass}_2(V)$. The fundamental sequence on $G$ is:
\begin{equation} \label{fun}
0\rightarrow M\rightarrow V_G\rightarrow Q\rightarrow 0
\end{equation}
where $M$ is the universal subbundle and $Q$ is the quotient bundle. Denote by $\G$ the incidence correspondence subscheme of $P\times G$, see \cite[Page 36]{Compo}. In particular, we have \[\G=\{(x,l)|x\in l\}\] and two maps $p$ and $q$ as follows
\[\xymatrix{G &\PP(Q)=\G=\PP(K) \ar[l]_>>>>{q} \ar[r]^<<<{p}& P}\]
where $K=\Omega_{P}^1(1)$. Note that there is a short exact sequence \[0\rightarrow K\rightarrow V_P\rightarrow \OO_P(1)\rightarrow 0\]where $V_P=\OO_P^{\oplus 6}$. Let R (resp. N) be the fundamental sheaf $\OO_{\PP(Q)}(1)=p^*\OO_P(1)$ (resp. $\OO_{\PP(K)}(1)$). We denote $q^*Q$ by $Q_{\G}$.
We summarize some results from \cite{Compo} in the following proposition.
\begin{prop}\cite[5.2.2-5.4.2]{Compo} \label{comprop}\\
We have a short exact sequence \[0\rightarrow N\rightarrow Q_{\G}\rightarrow R\rightarrow 0.\] Therefore, we have $\wedge^2 Q_{\G}=N \otimes R$. Moreover, the dualizing sheaf $\omega_{\G/k}$ is $N^{-5}\otimes R^{-7}$. We have \[\wedge ^j E(n)=\wedge^{(4-j)}(\Sym_3(Q))\otimes (\wedge^2 Q)^{\otimes (n-6)}\]where $E=\Sym_3(Q)^{\vee}$.
\end{prop}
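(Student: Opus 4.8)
The plan is to extract all four statements from the two projective-bundle structures $q\colon\G=\PP(Q)\to G$ and $p\colon\G=\PP(K)\to P$ together with the Euler sequence of $P=\PP^5$, so that the proof is characteristic-free and uses no transcendental input. Throughout, $Q$ has rank $2$ and $K=\Omega^1_P(1)$ has rank $5$, so $\G$ has dimension $9$; the only genuinely delicate point is keeping the two projectivization conventions straight and matching up the tautological line bundles, after which everything reduces to determinant arithmetic.

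\emph{The exact sequence and $\wedge^2Q_\G$.} The relative tautological sequence of $q$ is $0\to S\to Q_\G\to R\to 0$ with $S$ a line bundle and $R=\OO_{\PP(Q)}(1)=p^*\OO_P(1)$, so it suffices to identify $S$ with $N$. For this I would note that the $\PP^1$-bundle $q$ has tautological line subbundle $p^*\OO_P(-1)\subset\OO_\G^{\oplus 6}$ (a point of $\G$ over a line $l$ is a point of $l$, i.e.\ a line inside the corresponding $2$-plane), and feed this into the pulled-back Euler sequence $0\to p^*\OO_P(-1)\to\OO_\G^{\oplus 6}\to p^*T_P(-1)\to 0$. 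Since $T_P(-1)^\vee=\Omega^1_P(1)=K$, this exhibits $\G$ as the $\PP^4$-bundle $\PP(K)$ over $P$ in such a way that $S$ becomes the relative tautological bundle $\OO_{\PP(K)}(1)=N$, giving $0\to N\to Q_\G\to R\to 0$. Taking the top exterior power of this rank-$2$ bundle then yields $\wedge^2Q_\G=\det Q_\G=N\otimes R$.

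\emph{The dualizing sheaf.} I would compute $\omega_{\G/k}$ via $p\colon\G=\PP(K)\to P$ and adjunction, $\omega_{\G/k}=p^*\omega_{P/k}\otimes\omega_{\G/P}$. Here $\omega_{P/k}=\OO_P(-6)$, so $p^*\omega_{P/k}=R^{-6}$, and the relative canonical of the projectivization of a rank-$5$ bundle is $\omega_{\G/P}=\OO_{\PP(K)}(-5)\otimes p^*\det K=N^{-5}\otimes p^*\det K$; since $\det K=\det\Omega^1_P\otimes\OO_P(5)=\OO_P(-6)\otimes\OO_P(5)=\OO_P(-1)$, this equals $N^{-5}\otimes R^{-1}$. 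Hence $\omega_{\G/k}=R^{-6}\otimes N^{-5}\otimes R^{-1}=N^{-5}\otimes R^{-7}$. As a check, the same answer drops out of $q\colon\G\to G$ from $\omega_{G/k}=(\wedge^2Q)^{\otimes-6}$ (the canonical bundle of $\mathrm{Grass}(2,6)$), $\omega_{\G/G}=R^{-2}\otimes q^*(\wedge^2Q)$, and the identity $\wedge^2Q_\G=N\otimes R$ just established.

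\emph{The exterior powers of $E$.} This is linear algebra: $E=\Sym_3(Q)^\vee$ has rank $4$ since $Q$ has rank $2$, and for any rank-$4$ bundle $F$ the perfect pairing $\wedge^jF\otimes\wedge^{4-j}F\to\wedge^4F=\det F$ gives $\wedge^j(F^\vee)\cong\wedge^{4-j}F\otimes(\det F)^{-1}$. Applying this with $F=\Sym_3(Q)$, using $\det\Sym_3(Q)=(\wedge^2Q)^{\otimes 6}$ (the splitting principle: if $Q$ splits as a sum of line bundles with product $\wedge^2Q$, then the weights of $\Sym_3(Q)$ sum to $6$ times that class), and twisting by the Plücker generator $(\wedge^2Q)^{\otimes n}$, one obtains $\wedge^jE(n)=\wedge^{4-j}(\Sym_3(Q))\otimes(\wedge^2Q)^{\otimes(n-6)}$. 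The main obstacle, as noted, is purely notational — pinning down the conventions for $\PP(Q)$ and $\PP(K)$ and the resulting identification $S=N$; everything else is formal.
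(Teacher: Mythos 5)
This proposition is quoted in the paper directly from Altman--Kleiman \cite[5.2.2--5.4.2]{Compo} with no proof given, so there is nothing in-paper to compare against; your derivation from the two projective-bundle structures on $\G$ is correct and is the standard one. All four claims check out as you describe: the tautological sequence of $q\colon\PP(Q)\to G$ for the rank-two bundle $Q$, taking its determinant for $\wedge^2 Q_\G=N\otimes R$, the factorization $\omega_{\G/k}=p^*\omega_P\otimes\omega_{\G/P}$ together with the relative-canonical formula $\omega_{\PP(K)/P}=\OO_{\PP(K)}(-5)\otimes p^*\det K$ and $\det K=\OO_P(-1)$, and the rank-four self-duality $\wedge^jF^\vee\cong\wedge^{4-j}F\otimes(\det F)^{-1}$ combined with $\det\Sym_3 Q=(\wedge^2Q)^{\otimes 6}$. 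The one step I would make fully explicit is the identification of $\ker(Q_\G\to R)$ with $N$: the cleanest route is the snake lemma applied to the two presentations of $R$ on $\G$, namely the surjection $V_\G\twoheadrightarrow q^*Q\twoheadrightarrow R$ (with kernel $q^*M$ of the first arrow) and the pulled-back sequence $0\to p^*K\to V_\G\to R\to 0$, which yields $0\to q^*M\to p^*K\to\ker(Q_\G\to R)\to 0$ and so exhibits $\ker(Q_\G\to R)$ as the universal rank-one quotient $\OO_{\PP(K)}(1)=N$ of $p^*K$. Your route through the dual Euler sequence $0\to p^*\OO_P(-1)\to\OO_\G^{\oplus 6}\to p^*T_P(-1)\to 0$ is a dualization of the same argument and does work, but as written it leaves the final matching of $S$ with $N$ slightly implicit, which is exactly the convention-dependent point you flag.
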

\begin{lemm} \label{vanish}
Some cohomology groups are zeros as follows.
\begin{enumerate}
\item $\Ho^i(\G,N^s\otimes R^t)=0$ if $-1\geq s \geq -4$, or if $s=0$ and $i\notin \{0,5\}$, or if $s=0$, $i=0$ and $t\leq -1$, or if $s=1$ and $i\neq 1$, or if $s=1, i=1$ and $t\neq -1$, or if $s=-5$ and $i\notin \{4,9\}$, or if $s=-6$ and $i\notin \{4,8,9\}$.
\item $\Ho^i(\G,Q_{\G}\otimes N^s\otimes R^t)=0$ if $-2\geq s \geq -4$, or if $s=-5$ and $i \notin \{4,9\}$.
\item $\Ho^i(\G, \Sym_2(Q_{\G})\otimes N^s\otimes R^t)=0$ if $s=-3$ or $s=-4$, or if $s=-5$, $i\notin \{9,4\}$, or if $s=-5$, $i=4$ and $t\leq -4$, or if $s=-5$, $i=9$ and $t\geq -8$, or if $s=-6$ and $i\notin \{8,9,4\}$.
\item $\Ho^i(\G, \Sym_3(Q_{\G})\otimes N^s\otimes R^t)=0$ if $s=-5$ and $i\notin \{4,9\}$, or if $s=-5$, $i=4$ and $t\leq -5$, or if $s\in \{-6,-7\}$ and $i\in \{2,3\}$.
\item $\Ho^i(\G, \wedge^2\Sym_2(Q_{\G})\otimes N^s\otimes R^t)=0$ if $s\in \{-4,-5\}$, or if $s=-3$ and $i\notin \{0,5\}$, or if $s=-3$, $i=-5$ and $t\geq -6$.
\item $\Ho^i(\G, \wedge^2 \Sym_3(Q_{\G})\otimes N^s\otimes R^t)=0$ if $s=-5$ and $i\notin \{0,5\}$, or if $s=-5$, $i=5$ and $t\geq -6$, or if $s=-6, i\notin \{4,9\}$, or if $s=-6$, $i=4$ and $t\leq -5$, or if $s=-6$, $i=9$ and $t\geq -9$ or if $s=-7$ and $i\in \{1,2\}$.
\end{enumerate}
\end{lemm}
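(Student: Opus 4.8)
The plan is to reduce everything to item (1) and then to compute item (1) by pushing forward along the two projective bundle maps $q\colon\G=\PP(Q)\to G$ and $p\colon\G=\PP(K)\to P$ of Proposition \ref{comprop}, invoking Borel--Weil--Bott on the Grassmannian $G$ and Bott vanishing on $P=\PP^5$. First I would observe that items (2)--(6) follow from item (1). The short exact sequence $0\to N\to Q_\G\to R\to 0$ of Proposition \ref{comprop} induces finite filtrations on $Q_\G$, on $\Sym_2(Q_\G)$, $\Sym_3(Q_\G)$ and on their second exterior powers whose associated graded pieces are line bundles of the form $N^a\otimes R^b$; tensoring these filtrations by $N^s\otimes R^t$ and running the resulting long exact cohomology sequences shows that the cohomology of each twisted bundle vanishes in a given degree whenever the cohomology of every $N^{a+s}\otimes R^{b+t}$ occurring does. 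Since each graded piece is only a line bundle, vanishing of one of them propagates through the extensions with no further condition, so the only subtlety in this step is a combinatorial one: matching the index sets and the boundary conditions on $t$ in (2)--(6) against the finitely many instances of item (1) that arise.

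For item (1) itself I would split on $s$. When $s=0$ the sheaf is $R^t=p^\ast\OO_P(t)$; since $p$ is a $\PP^4$-bundle one has $Rp_\ast\OO_\G=\OO_P$, hence $\Ho^i(\G,R^t)=\Ho^i(\PP^5,\OO_P(t))$, and the assertions ($\Ho^i=0$ unless $i\in\{0,5\}$, and $\Ho^0=0$ for $t\le -1$) are exactly Bott's computation on $\PP^5$. When $s\ne 0$, the relation $N\otimes R=\wedge^2 Q_\G$ lets me write $N^s\otimes R^t=R^{\,t-s}\otimes q^\ast((\wedge^2 Q)^{\otimes s})$, a twist of a fixed power of the relative $\OO(1)$ for $q$ by a line bundle pulled back from $G$. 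The $\PP^1$-bundle pushforward formula then evaluates $Rq_\ast$: in non-negative relative degree it is a symmetric power of $Q$ in cohomological degree $0$; in relative degree $-1$ it vanishes; in relative degree $\le -2$ it is a symmetric power of $Q^\vee$ twisted by a power of $\wedge^2 Q$, in cohomological degree $1$. Each group in item (1) thereby becomes the cohomology of a homogeneous bundle $\Sym^mQ\otimes(\wedge^2Q)^{\otimes c}$ or $\Sym^mQ^\vee\otimes(\wedge^2Q)^{\otimes c}$ on $G$, which Borel--Weil--Bott computes: the associated $\mathrm{GL}(V)$-weight is either singular after the $\rho$-shift, forcing all cohomology to vanish, or regular, giving a single nonzero group whose degree is the length of the Weyl element straightening it. The index sets $\{0,5\}$, $\{4,9\}$, $\{4,8,9\}$, and the accompanying inequalities on $t$, are precisely what this bookkeeping returns. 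For the groups concentrated in high cohomological degree I would alternatively use Serre duality on the smooth $9$-fold $\G$ with $\omega_{\G/k}=N^{-5}\otimes R^{-7}$ (Proposition \ref{comprop}) to trade a vanishing in degree $i$ for one in degree $9-i$ of the dual sheaf, reducing to the low-degree cases already treated.

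One input is characteristic-sensitive: Borel--Weil--Bott fails in general over a field of positive characteristic. To handle this I would lift the whole incidence diagram $\G\subset P\times G$ — together with $M$, $Q$, $K$, $N$, $R$, all of which are defined and smooth over $\Z$ — to the Witt ring $W(k)$, and use flatness of $p$ and $q$ together with semicontinuity and constancy of Euler characteristics to deduce the stated vanishings over $k$ from the corresponding Bott-theoretic vanishings at a characteristic-zero point; one may instead quote Kempf's vanishing theorem for the dominant pieces. Since the $Rq_\ast$ and $Rp_\ast$ reductions are characteristic-free, this is the only place characteristic enters.

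The main obstacle is organisational rather than conceptual. Decomposing $\Sym_2(Q_\G)$, $\Sym_3(Q_\G)$ and their second exterior powers into $N$-$R$ graded pieces produces a sizeable table of homogeneous bundles on $G$, each of which must be run through Bott's algorithm and then through the $\PP^1$-bundle degree shift; the narrow index sets and the boundary inequalities on $t$ leave essentially no slack, so the real labour is the case-by-case verification that no off-by-one error enters anywhere.
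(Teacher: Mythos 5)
Your reduction of items (2)--(6) to item (1) via filtrations whose graded pieces are line bundles $N^a\otimes R^b$ is essentially the paper's strategy (the paper uses a ladder of short exact sequences from Proposition~\ref{comprop} and \cite[5.11.5, p.~42]{Compo} rather than filtering all the way to line bundles in one step, but the effect is the same, modulo bookkeeping). The genuine divergence is in item (1): you push forward along $q\colon\G=\PP(Q)\to G$ and invoke Borel--Weil--Bott on the Grassmannian, whereas the paper pushes forward along $p\colon\G=\PP(K)\to P=\PP^5$ using the formulas \cite[5.7.1, 5.7.2]{Compo}. In the paper's route, $R^jp_*(N^s)$ vanishes outright for $-4\le s\le -1$ (the fiber is $\PP^4$), while $s\in\{0,1,-5,-6\}$ lands on twists $\Omega^j_{\PP^5}(k)$ whose cohomology is handled by the elementary Bott formula on projective space --- which, crucially, holds over $\Z$ and is therefore characteristic-independent.

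This is where your proposal has a real gap. You correctly note that Borel--Weil--Bott can fail in positive characteristic, but the fix you sketch does not work. Semicontinuity gives only $h^i(k)\ge h^i(K)$, so vanishing at the characteristic-zero generic point does \emph{not} force vanishing at the special fiber; constancy of the Euler characteristic constrains only the alternating sum and hence cannot be combined with semicontinuity to conclude $h^i(k)=0$ unless one already controls all the other $h^j(k)$, which is exactly what is in question; and Kempf vanishing covers the dominant weights only, while many of the homogeneous bundles $\Sym^mQ^\vee\otimes(\wedge^2Q)^{\otimes c}$ that emerge from the $\PP^1$-bundle pushforward are either anti-dominant (where one can still use Serre duality) or, more seriously, give \emph{singular} $\rho$-shifted weights, precisely the case where vanishing is known to fail for some homogeneous spaces in small positive characteristic. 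Since the lemma is later used inside Proposition~\ref{nsgroup} over an arbitrary algebraically closed field, this is not cosmetic. If you want to keep the $q$-pushforward route you would need to verify the singular-weight cases on $\textbf{Grass}_2(V)$ by hand (or by the same degeneration argument the paper uses elsewhere, but then done per-bundle and per-degree). The more economical repair is to follow the paper and push forward along $p$ instead, which lands everything on $\PP^5$ where the required vanishings are characteristic-free.
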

\begin{proof}
The lemma follows from Proposition \ref{comprop}, the formula \cite[5.7.1, 5.7.2]{Compo}, Serre duality and the Bott vanishing theorem. The proof is straightforward but tedious.

\begin{enumerate}
\item By \cite[5.7.1]{Compo}, the cohomology groups vanish if $-1\geq s \geq -4$. If $s=0$ and $i\notin\{0,5\}$ or if $s=i=0$ and $t\leq -1$, then, by \cite[5.7.1]{Compo}, we have $\Ho^i(\G,N^s\otimes R^t)=\Ho^i(P,\OO_P(t))=0$. If $s=1$ then, by \cite[5.7.1,5.7.2]{Compo}, we have that $\Ho^i(\G,N^s\otimes R^t)=\Ho^i(P,\Omega^1_P(t+1))=0$ for $i\neq 1$ or \[i=1 \text{~but~} t\neq-1\]where we use the Bott vanishing theorem (see \cite[Bott's Formula]{Bott}). \\
For $s=-5$ and $i\notin \{4,9\}$, we conclude $\Ho^i(N^{-5}\otimes R^{t})=0$ by Serre dualiy and \cite[5.7.1]{Compo}. In fact, we have\[\Ho^i(N^{-5}\otimes R^{t})=\Ho^{9-i}(\G, \omega_{\G} \otimes N^5R^{-t})=\Ho^{9-i}(P,\OO_P(-t-7))=0.\]
For $s=-6$ and $i\notin \{4,8,9\}$, we conclude $\Ho^i(N^{s}\otimes R^{t})=0$ by Serre dualiy and \cite[5.7.1]{Compo}. In fact, we have \[\Ho^i(N^{-6}\otimes R^{t})=\Ho^{9-i}(\G, \omega_{\G} \otimes N^6R^{-t})=\Ho^{9-i}(P,\Omega^1_P(-t-6))=0\]by \cite[5.7.1]{Compo} and the Bott vanishing theorem. \\
In summary, we have proved (i).\\

\item Applying the short exact sequence in Proposition \ref{comprop} and \cite[5.7.1]{Compo}, 
we have a long exact sequence as follows\[\ldots \rightarrow (0=)\Ho^i(\G,N^{s+1}\otimes R^{t}) \rightarrow \Ho^i(\G,Q_{\G}\otimes N^{s}\otimes R^{t})\rightarrow \Ho^i(\G,N^{s}\otimes R^{t+1})(=0)\rightarrow \ldots \] if $-2\geq s \geq -4$. \\
For $s=-5$ and $i\notin \{9,4\}$, it follows from (i) that $\Ho^i(\G,N^{s+1}\otimes R^{t})=0$ and $\Ho^i(\G,N^{s}\otimes R^{t+1})=0$. In summary, we have proved (ii).\\

\item By \cite[5.11.5]{Compo}, we have a short exact sequence as follows
\begin{equation}\label{sexsq}
0\rightarrow Q_{\G}\otimes N^{s+1}\otimes R^{t} \rightarrow \Sym_2(Q_{\G})\otimes N^s\otimes R^t \rightarrow N^s\otimes R^{t+2}\rightarrow 0.
\end{equation}
Using (ii), we have $ \Ho^i(\G,\Sym_2(Q_{\G})\otimes N^{s}\otimes R^{t})=0$ if $s=-3$ or $-4$.\\

For $s=-5$ and $i\notin \{4,9\}$, we have the long exact sequence associated to (\ref{sexsq}) as follows:
\[\ldots \rightarrow \Ho^i(Q_{\G}\otimes N^{s+1}R^t )\rightarrow \Ho^i(\Sym_2(Q_{\G})\otimes N^{s} R^{t}) \rightarrow \Ho^i(N^{s}R^{t+2}) \rightarrow \ldots\]
where  $\Ho^i(N^{-5}R^{t+2})=0$ and $\Ho^i(Q_{\G}\otimes N^{-4}R^t )=0$ by (i) and (ii).\\
For $s=-5$, $i=4$ and $t\leq -4$, we have $\Ho^i(Q_{\G}\otimes N^{-4}R^t )=0$ by (ii) and \[\Ho^i(N^{-5}R^{t+2})=\Ho^{9-i}(R^{-t-9})=\Ho^5(P,\OO_P(-t-9))=0\]by \cite[5.7.1]{Compo}.\\
For $s=-5$, $i=9$ and $t\geq -8$, we know that $\Ho^i(Q_{\G}\otimes N^{-4}R^t )=0$ and \[\Ho^i(N^{-5}R^{t+2})=\Ho^{9-i}(R^{-t-9})=\Ho^0(P,\OO_P(-t-9))=0\]by \cite[5.7.1]{Compo}.\\
Similarly, for $s=-6$ and $i\notin \{4,8,9\}$, it follows from (i) and (ii) that $\Ho^i(Q_{\G}\otimes N^{-5}R^t )=0$ and $\Ho^i(\G,N^{-6}R^{t+2})=0$. We show (iii).
\item By \cite[Page 42]{Compo}, we have a short exact sequence
\begin{equation}\label{eq3}
0\rightarrow \Sym_2(Q_{\G})\otimes N\rightarrow \Sym_3(Q_{\G})\rightarrow R^3 \rightarrow 0.
\end{equation}
It gives rise to a long exact sequence after tensoring $N^sR^t$ as follows.
\[\ldots\rightarrow \Ho^i(\Sym_2(Q_{\G})\otimes N^{s+1}R^t) \rightarrow \Ho^i(\Sym_3(Q_{\G})\otimes N^s R^t) \rightarrow \Ho^i(N^s\otimes R^{t+3})\rightarrow \ldots\]
If $s=-5$ and $i\notin \{4,9\}$, then we have \[\Ho^i(N^s\otimes R^{t+3})=0 \text{~and~} \Ho^i(\Sym_2(Q_{\G})\otimes N^{s+1}R^t)=0\]by (i) and (iii).
If $s=-5$, $i=4$ and $t\leq -6$, then we have \[\Ho^i(\Sym_2(Q_{\G})\otimes N^{s+1}R^t)=0\] by (iii) and the fact \[\Ho^i(N^s\otimes R^{t+3})=\Ho^{9-i}(\G, R^{-t-10})\]\[=\Ho^5(P,\OO_P(-t-10))=\Ho^0(P,\OO_P(t+4))=0\] where we use Serre duality and \cite[5.7.1]{Compo}.\\
For $s\in\{-6, -7\}$ and $i\in \{2,3\}$, the exact sequence (\ref{eq3}) gives rise to an exact sequence
\[\Ho^i(\Sym_2(Q_{\G})\otimes N^{s+1}\otimes R^t) \rightarrow \Ho^i(\Sym_3(Q_{\G})\otimes N^{s}\otimes R^t) \rightarrow \Ho^i(N^s\otimes R^{t+3})\] where the first term is zero by (iii). We claim the last term is also zero. In fact, it follows from (i) for $s=-6$. For $s=-7$, we have \[\Ho^i(\G, N^s\otimes R^{t+3})=\Ho^{9-i}(\G, N^{-s-5}\otimes R^{-t-10})\]\[=\Ho^{9-i}(\PP^5, \Sym_2(K)(-t-9))=0\]by Serre duality and \cite[5.7.1]{Compo}. Therefore, we have proved (iv).

\item According to \cite[Lemma 2.10, 5.11.5]{Compo}, we have a short exact sequence (\cite[Page 42]{Compo})
\[0\rightarrow N^{s+3} \otimes R^{t+1}\rightarrow \wedge^2\Sym_2(Q_{\G})\otimes N^s\otimes R^t\rightarrow Q_{\G}\otimes N^{s+1}\otimes R^{t+2}\rightarrow 0.\]It gives rise to a long exact sequence
\[\ldots\rightarrow \Ho^i(N^{s+3}R^{t+1})\rightarrow \Ho^i(\wedge^2 \Sym_2(Q_{\G})\otimes N^s R^t)\rightarrow \Ho^i( Q_{\G}\otimes N^{s+1} R^{t+2}) \rightarrow \ldots\] We notice that if $s=-3$, $i=5$ and $t\geq -6$, then $\Ho^i(N^{s+3}\otimes R^{t+1})=\Ho^i(P,\OO_P(t+1))=0$ by \cite[5.7.1]{Compo}. By (i) and (ii), we conclude that \[ \Ho^i(\G, N^{s+3}R^{t+1})=\Ho^i( \G,Q_{\G}\otimes N^{s+1} R^{t+2}) =0\]under the hypotheses of (v). Therefore, we have that \[\Ho^i(\G, \wedge^2 \Sym_2(Q_{\G})\otimes N^s R^t)=0.\]We have proved (v).
\item As in \cite[Page 42]{Compo}, we have the following short exact sequence\[0\rightarrow \wedge^2 \Sym_2(Q_{\G})\otimes N^{s+2}\otimes R^t \rightarrow \wedge^2 \Sym_3(Q_{\G})\otimes N^s \otimes R^t \rightarrow \Sym_2(Q_{\G})\otimes N^{s+1}\otimes R^{t+1} \rightarrow 0.\] which gives rise to a long exact sequence
\[\ldots \rightarrow \Ho^i(\wedge^2 \Sym_2 \otimes N^{s+2}R^t) \rightarrow  \Ho^i(\wedge^2 \Sym_3\otimes N^s R^t)\rightarrow  \Ho^i(\Sym_2\otimes N^{s+1}R^{t+1})\rightarrow \ldots\]
By (iii) and (v), we conclude that
\begin{center}
 $\Ho^i(\G,\wedge^2 \Sym_2(Q_{\G})\otimes N^{s+2}\otimes R^t )$ and $\Ho^i(\G, \Sym_2(Q_{\G})\otimes N^{s+1}\otimes R^{t+1})$
\end{center}are zeros under the hypotheses of (vi). We have proved (vi).
\end{enumerate}
\end{proof}

\begin{prop}\label{nsgroup}With the notations as above, we have
\[\Ho^1(\Fa(X_3),\OO_{\Fa(X_3)})=\Ho^0(\Fa(X_3),\Omg^1_{\Fa(X_3)})=\Ho^3(\Fa(X_3),\OO_{\Fa(X_3)})=0.\]Therefore, the N\'eron-Severi group $\NS(\Fa(X_3))$ of $\Fa(X_3)$ is torsion free. Moreover, we have that $\Ho^2(\Fa(X_3),\OO_{\Fa(X_3)})=k.$
\end{prop}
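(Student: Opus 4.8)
The plan is to compute the cohomology groups $\Ho^i(\Fa(X_3),\OO_{\Fa(X_3)})$ and $\Ho^0(\Fa(X_3),\Omg^1_{\Fa(X_3)})$ via a Koszul-type resolution of the structure sheaf of $\Fa(X_3)$ (and of its cotangent sheaf) inside the incidence variety $\G$, then feed the terms into the massive vanishing statement of Lemma \ref{vanish}. Concretely, $\Fa(X_3)\subset G$ is the zero locus of a section of $E^\vee=\Sym_3(Q)$ cut out by the cubic equation, so pulling back to $\G$ one gets a Koszul complex resolving $\OO_{\Fa(X_3)}$ by the sheaves $\wedge^j E(n)$, and Proposition \ref{comprop} rewrites each $\wedge^j E(n)$ in terms of $\wedge^{4-j}\Sym_3(Q_\G)$ twisted by powers of $N$ and $R$ (via $\wedge^2 Q_\G = N\otimes R$). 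First I would pin down the exact twist: identify $\Fa(X_3)$ as the vanishing of the tautological cubic section, so that the relevant $n$ in $\wedge^j E(n)$ is determined, and transfer the computation along $q:\G\to G$ (which is a $\PP^1$-bundle, so $Rq_*$ is transparent) — or, following \cite{Compo} directly, work on $\G$ throughout where $R$ and $N$ give honest line bundles.

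Next I would run the hypercohomology spectral sequence of the Koszul resolution. For $\Ho^1(\OO)$ and $\Ho^3(\OO)$ the claim is that every contributing term $\Ho^i(\G,\wedge^{4-j}\Sym_3(Q_\G)\otimes N^s\otimes R^t)$ with $i-j$ in the relevant range vanishes by the appropriate clause of Lemma \ref{vanish} — this is exactly what parts (i)–(vi) were engineered to provide, with the various ranges of $s$ (namely $s$ running over $\{-4,\dots,-7\}$ roughly, matching the $-6$ shift from $(\wedge^2 Q_\G)^{\otimes(n-6)}$) chosen to kill precisely these contributions. For $\Ho^0(\Omg^1_{\Fa(X_3)})$ I would instead resolve $\Omg^1_{\Fa(X_3)}$ using the conormal sequence $0\to N^\vee_{\Fa/G}\to \Omg^1_G|_{\Fa}\to \Omg^1_{\Fa}\to 0$ together with the Koszul resolutions of $\Omg^1_G|_{\Fa}$ and of $N^\vee_{\Fa/G}=\Sym_3(Q)|_{\Fa}$; the global $1$-forms on $G$ vanish and the remaining terms are again handled by Lemma \ref{vanish}. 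Once $h^{1,0}=h^{0,1}=h^{0,3}=0$ are established, torsion-freeness of $\NS(\Fa(X_3))$ follows from the exponential/Kummer sequence and the fact (Lemma \ref{cristorsionfree}, or its characteristic-zero analogue Lemma \ref{torsionfree}) that the relevant second cohomology has no torsion: with $h^{0,1}=0$ the group $\Pic^0$ vanishes, so $\NS=\Pic$ injects into $\Ho^2$, which is torsion-free. Finally $\Ho^2(\OO_{\Fa(X_3)})=k$ is one-dimensional because $\Fa(X_3)$ is a holomorphic symplectic fourfold (Beauville–Donagi), or, staying algebraic, because the same Koszul computation leaves exactly a one-dimensional contribution in cohomological degree $2$; I would simply identify the surviving term $\Ho^i(\G,\wedge^{4-j}\Sym_3(Q_\G)\otimes N^s\otimes R^t)$ that Lemma \ref{vanish} does \emph{not} annihilate and check it is $1$-dimensional.

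The main obstacle I expect is bookkeeping rather than conceptual: getting the twists exactly right in the Koszul resolution (the precise value of $n$, the effect of $Rq_*$ versus working on $\G$, and the Euler-characteristic shift by $\omega_{\G/k}=N^{-5}\otimes R^{-7}$ when Serre duality is invoked), and then matching each of the $O(20)$ terms across the pages of the spectral sequence to the correct sub-case of Lemma \ref{vanish} — which is why the author warns the proof is tedious. A secondary subtlety is that in positive characteristic one must know the relevant $\Ext$/hypercohomology spectral sequences degenerate or at least that no differentials can resurrect a killed term; since all the individual $E_1$-terms in the dangerous total degrees vanish outright, this is automatic, but I would state it explicitly. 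The only genuinely non-formal input is the dimension count giving $\Ho^2(\OO)=k$, and there the cleanest route is to combine the vanishing of the neighboring groups with the known Euler characteristic $\chi(\OO_{\Fa(X_3)})$ (computable by Hirzebruch–Riemann–Roch, or lifted from the complex case as in the Fano-surface lemma above) to force the middle term to be exactly $1$.
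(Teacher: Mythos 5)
Your overall plan matches the paper's proof closely: Koszul resolution of $\OO_{\Fa(X_3)}$ and the syzygy argument for the $\Ho^i(\OO)$'s, the conormal sequence $0\to E|_{\Fa}\to\Omega^1_G|_{\Fa}\to\Omega^1_{\Fa}\to 0$ for $\Ho^0(\Omega^1_{\Fa})$, and then torsion-freeness of $\NS$ via an injection into a torsion-free second cohomology group. One organizational difference: for the $\Ho^i(\Fa,\OO_{\Fa})$ and the one-dimensionality of $\Ho^2(\OO_{\Fa})$, the paper does \emph{not} route through Lemma~\ref{vanish} on $\G$ at all; it simply cites the vanishing (and the one non-vanishing $\Ho^4(G,\wedge^2E)=k$) of \cite[Theorem~5.1]{Compo} directly on $G$. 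The machinery of Lemma~\ref{vanish} is reserved for the harder $\Omega^1_{\Fa}$ computation via Lemmas~\ref{v1} and~\ref{v2}. Your alternative of deriving these on $\G$ from scratch is morally equivalent but duplicates work Altman--Kleiman already did.

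The genuine soft spot is the torsion-freeness step. You invoke Lemmas~\ref{torsionfree} and~\ref{cristorsionfree}, but both of those are proved \emph{for smooth hypersurfaces in $\PP^{n+1}$} (Lefschetz hyperplane theorem, or lifting a hypersurface and comparing Hodge numbers) and do not apply to $\Fa(X_3)$, which is a codimension-four subvariety of the Grassmannian. What the paper actually does is re-establish the needed torsion-freeness for $\Fa(X_3)$ from the Hodge-number vanishings just proved: from $h^{0,1}=h^{1,0}=0$ one gets $\Ho^1_{\DR}(\Fa(X_3))=0$, then the universal coefficient sequence for crystalline cohomology forces $\cris^2(\Fa(X_3)/W)$ torsion-free, killing $p$-torsion via Illusie--Deligne; separately, the $l$-torsion is killed by the Kummer sequence together with torsion-freeness of $\et^2(\Fa(X_3),\Z_l)$, which is itself deduced from simple-connectedness of $\Fa(X_3)_{\C}$ and smooth/proper base change from a lift. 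You should make the $p$-versus-$l$ dichotomy explicit and replace the inapplicable citations with the arguments above; an ``exponential sequence'' appeal in particular does not survive into positive characteristic.
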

\begin{proof}
First of all, we show that $\Ho^1(\Fa(X_3),\OO_{\Fa(X_3)})=\Ho^3(\Fa(X_3),\OO_{\Fa(X_3)})=0$. In fact, the Fano variety $\Fa(X_3)$ of $X_3$ is defined by a regular section $s$ of the vector bundle $\Sym_3(Q)=E^{\vee}$ which is of rank $4$, see \cite[Theorem 1.3]{Compo}. So we have the following exact sequence
\begin{equation}\label{ex}
\xymatrix{0\ar[r] &\wedge ^4 E \ar[r] &\wedge^3 E\ar[r] &\wedge^2 E\ar[r] & E\ar[r]^{s} & \OO_{G}\ar[r] & \OO_{F}\ar[r] & 0.}
\end{equation}

By the syzygy argument, in order to prove that $\Ho^1(F,\OO_F)=0$, it suffices to prove that \[\Ho^1(G,\OO_G)=\Ho^2(G,E)=\Ho^3(G,\wedge^2 E)=\Ho^4(G,\wedge^3 E)=\Ho^5(G, \wedge^4 E)=0.\]
This is verified by \cite[Theorem 5.1]{Compo}. Similarly, to prove that $\Ho^3(\Fa,\OO_{\Fa})=0$, it suffices to prove that
\[\Ho^3(G,\OO_G)=\Ho^4(G,E)=\Ho^5(G,\wedge^2 E)=\Ho^6(G,\wedge^3 E)=\Ho^7(G, \wedge^4 E)=0\]
which is also verified by \cite[Theorem 5.1]{Compo}. To prove that $\Ho^2(F,\OO_F)=k$, it suffices to prove that
\[\Ho^2(\OO_G)=\Ho^3(\OO_G)=\Ho^2(E)\]\[=\Ho^3(E)=\Ho^4(\wedge^3 E)=\Ho^5(\wedge^3 E)\]\[=\Ho^5(\wedge^4 E)=\Ho^6(\wedge^4 E)=0\] and $\Ho^4(G,\wedge ^2 E)=k$ which are verified by \cite[Theorem 5.1]{Compo}.
\\
Apply Lemma \ref{v1} and Lemma \ref{v2} to the long exact sequence of cohomology groups associated to the short exact sequence
\[0\rightarrow E|_{\Fa(X_3)}\rightarrow \Omega_{G}^1|_{\Fa(X_3)} \rightarrow \Omega_{{\Fa(X_3)}}^1\rightarrow 0.\]Note that $E|_{\Fa(X_3)}$ is the conormal bundle of ${\Fa(X_3)}$ in $G$. We conclude that $\Ho^0(\Fa(X_3),\Omg^1_{\Fa(X_3)})=0$.

In summary, we conclude $\Ho^1_{\DR}(\Fa(X_3))=0$ since the Hodge numbers\[\Ho^1(\Fa(X_3),\OO_{\Fa(X_3)})=\Ho^0(\Fa(X_3),\Omg^1_{\Fa(X_3)})\]are zeros. 

By the universal coefficient theorem of crystalline cohomology, we know $\cris^2(\Fa(X_3)/W)$ is torsion-free. By a theorem of Illusie and Deligne, see \cite[Remark 3.5]{delsur} and \cite{Ill}, we have an injection \[\NS(\Fa(X_3))\otimes \Z_p \hookrightarrow \cris^2(\Fa(X_3)/W).\] We conclude that $\NS(\Fa(X_3))$ is $p$-torsion-free. On the other hand, we have the short exact sequence \cite{Milne}
\[0\rightarrow \NS(\Fa(X_3))\otimes \Z_l\rightarrow \et^2(\Fa(X_3),\Z_l(1))\rightarrow T_l(\mathrm{Br}(\Fa(X_3)))\rightarrow 0.\] We claim that $\et^2(\Fa(X_3),\Z_l(1))$ is torsion free. Therefore, the group $\NS(\Fa(X_3))$ is torsion-free.

In fact, the variety $\Fa(X_3)_{\mathbb{C}}$ is simply connected. By the universal coefficient theorem, we have
\[\Ho^2_{sing}(\Fa(X_3)_{\mathbb{C}},\Z_l)=Hom(\Ho_2(\Fa(X_3)_{\mathbb{C}},\Z),\Z_l)=\lim_{\leftarrow n } Hom(\Ho_2(\Fa(X_3)_{\mathbb{C}},\Z), \Z/l^n\Z)\]
\[=\lim_{\leftarrow n} \et^2 (\Fa(X_3)_{\mathbb{C}},\Z/l^n\Z)=\et^2(\Fa(X_3)_{\mathbb{C}},\Z_l).\]
Since $\Ho^2_{sing}(\Fa(X_3)_{\mathbb{C}},\Z)$ is torsion free, the group $\Ho^2_{sing}(\Fa(X_3)_{\mathbb{C}},\Z_l)$ is torsion-free. We show the claim holds in characteristic zero. Assume $\mathrm{char}(k)>0$. Since $\Fa(X_3)$ over $k$ has a lifting from positive characteristic to characteristic zero, the claim holds for $\Fa(X_3)$ over $k$.
\end{proof}
\begin{lemm} \label{v1}
With the notations as above, we have $\Ho^1(F,E|_F)=0$.
\end{lemm}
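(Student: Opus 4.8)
The plan is to resolve the sheaf $E|_F$ on $G$ by a Koszul-type complex and thereby reduce the statement to a short list of cohomology vanishings on the Grassmannian. Since $F=\Fa(X_3)$ is the zero locus of the regular section $s$ of $\Sym_3(Q)=E^{\vee}$ (a bundle of rank $4$), the complex (\ref{ex}) resolves $\OO_F$ by locally free $\OO_G$-modules. Tensoring (\ref{ex}) with the locally free sheaf $E$ — an exact operation — gives a resolution
\[0\to \wedge^4 E\otimes E\to \wedge^3 E\otimes E\to \wedge^2 E\otimes E\to E\otimes E\to E\to E|_F\to 0\]
of $E|_F$ on $G$.

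Next I would perform the same syzygy/hypercohomology bookkeeping as in the proof of Proposition \ref{nsgroup}: split the resolution into short exact sequences $0\to Z_{j+1}\to \wedge^{j}E\otimes E\to Z_{j}\to 0$ (reading $\wedge^0 E\otimes E$ as $E$, with $Z_0=E|_F$ and $Z_4=\wedge^4 E\otimes E$) and chase the associated long exact sequences. This shows $\Ho^1(F,E|_F)=0$ provided that
\[\Ho^1(G,E)=\Ho^2(G,E\otimes E)=\Ho^3(G,\wedge^2 E\otimes E)=\Ho^4(G,\wedge^3 E\otimes E)=\Ho^5(G,\wedge^4 E\otimes E)=0.\]

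To verify these I would decompose each bundle into irreducible homogeneous bundles on $G=\textbf{Grass}_2(V)$. Using $E=\Sym_3(Q)^{\vee}$, the Pieri rule for $\wedge^{j}E\otimes E$, and the plethystic identities for $\Sym_2(\Sym_3)$ and $\wedge^2(\Sym_3)$ of the rank-two bundle $Q$, every summand takes the form $\Sym_m(Q)\otimes(\wedge^2 Q)^{\otimes c}$, whose cohomology is governed by Bott's theorem — equivalently, by the formulas and tables in \cite[5.7.1, 5.7.2, Theorem 5.1]{Compo}. For each of the finitely many summands occurring in the relevant degree, the weight shifted by $\rho$ either has a repeated entry, so that all its cohomology vanishes, or is regular with Bott index different from that degree; in both cases the contribution vanishes.

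The only real obstacle is the size of this last step: the plethysms $\wedge^{j}(\Sym_3 Q^{\vee})\otimes \Sym_3 Q^{\vee}$ produce a moderate number of homogeneous summands, and the Bott condition has to be checked for each, which makes the argument tedious but entirely mechanical. No idea beyond the Koszul resolution and Bott vanishing is needed.
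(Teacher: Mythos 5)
Your reduction is exactly the paper's: tensor the Koszul complex \eqref{ex} with the locally free sheaf $E$ to get a resolution of $E|_F$ on $G$, split it into short exact sequences, and chase long exact sequences to reduce $\Ho^1(F,E|_F)=0$ to the five vanishings $\Ho^{j+1}(G,\wedge^{j}E\otimes E)=0$ for $j=0,\dots,4$. The difference is in how you propose to establish those vanishings. The paper does not decompose $\wedge^{j}E\otimes E$ into irreducibles; instead it rewrites each $\wedge^{j}E\otimes E$ as a derived pushforward from the flag variety $\G=\PP(Q)$ using the identity $E=R^1q_*(R^{-5})\otimes\wedge^2 Q$ and Proposition~\ref{comprop}, then works on $\G$ with the short exact sequences $0\to N\to Q_\G\to R\to 0$ and their symmetric/exterior consequences, ultimately landing in cohomology of line bundles on $\G$ that Altman--Kleiman's formulas 5.7.1--5.7.2 push down to $\PP^5$ (this is all packaged in Lemma~\ref{vanish}, and Lemma~\ref{pullback}). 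You instead propose to decompose $\wedge^{j}E\otimes E$ fully into irreducible homogeneous bundles $\Sym_m(Q)\otimes(\wedge^2 Q)^{\otimes c}$ using $E=\Sym_3(Q^\vee)=\Sym_3(Q)\otimes(\wedge^2 Q)^{-3}$, the identity $\wedge^{j}E=\wedge^{4-j}\Sym_3(Q)\otimes(\wedge^2 Q)^{-6}$ from \cite[5.4.2]{Compo}, Clebsch--Gordan for rank-two $Q$, and the small plethysms of $\Sym_3$, and then apply Bott's theorem on $G=\mathrm{Grass}_2(V)$ directly to each summand. This is valid: since cohomology distributes over direct sums, the paper's proved vanishings force each irreducible summand to have zero cohomology in the relevant degree, so the required Bott indices do indeed miss the target degrees. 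What the paper's route buys is avoiding full plethysm decompositions by passing through short exact sequences; what your route buys is a more uniform and self-contained computation (Bott on $G$ once and for all, no appeal to the $R^1q_*$ description of $E$), at the cost of writing out roughly a dozen irreducible summands with their weights. Both are correct; the choice is a matter of which bookkeeping one prefers.
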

\begin{proof}
By the exact sequence (\ref{ex}) and the syzygy arugment, it suffices to prove that
\[\Ho^5(G,\wedge^4 E\otimes E)=\Ho^4(\wedge^3 E\otimes E)=\Ho^3 (\wedge^2 E\otimes E)=\Ho^2(E\otimes E)=\Ho^1(G, E)=0.\]
\begin{enumerate}
\item We claim $\Ho^5(G,\wedge^4 E\otimes E)=0$. In fact, we have $\wedge ^4 E=(\wedge ^2 Q)^{-6}$ by Proposition \ref{comprop}. It follows from \cite[a formula for E Page 43]{Compo} that  \[E=R^1q_*(R^{-5})\otimes \wedge^2 Q.\] By the fact $R^0q_*(R^{-5})=0$ and Proposition \ref{comprop}, we conclude that $\wedge^4 E\otimes E$ is\[R^1q_*(R^{-5})\otimes (\wedge^2 Q)^{-5}=Rq_*(R^{-5}\otimes q^*(\wedge^2 Q)^{-5})=Rq_*(R^{-10}\otimes N^{-5}).\]Therefore, we show that
\[\Ho^5(G,\wedge^4 E\otimes E)=\Ho^5(\G, R^{-10}\otimes N^{-5})=\Ho^4(\G, R^3)=\Ho^4(P,\OO_P(3))=0.\]
\item We claim $\Ho^4(\wedge^3 E\otimes E)=0$. In fact, by Proposition \ref{comprop} and \cite[a formula of E in Page 43]{Compo}, we conclude that $\wedge^3 E\otimes E$ is
\[ \Sym_3(Q)\otimes (\wedge^2 Q)^{-5} \otimes R^1q_*(R^{-5})=\Sym_3(Q)\otimes (\wedge^2 Q)^{-5} \otimes Rq_*(R^{-5})\]
\[=Rq_*(\Sym_3(Q_{\G})\otimes N^{-5}R^{-10}).\]
Therefore, by Lemma \ref{vanish} (iv), we conclude that \[\Ho^4(G,\wedge^3 E\otimes E)=\Ho^4(\G,\Sym_3(Q_{\G})\otimes N^{-5}R^{-10})=0.\]

\item We claim that $\Ho^3(G,\wedge^2 E\otimes E)=0$. In fact, as before
\[\wedge^2 E\otimes E=\wedge^2 \Sym_3(Q)\otimes (\wedge ^2 Q)^{-5}\otimes Rq_*(R^{-5})\]\[=Rq_*(\wedge^2 \Sym_3(Q_{\G})\otimes N^{-5}\otimes R^{-10}).\]It follows that \[\Ho^3(G,\wedge^2 E\otimes E)=\Ho^3(\G,\wedge^2 \Sym_3(Q_{\G})\otimes N^{-5}\otimes R^{-10})=0.\]
The last equality $\Ho^3(\G,\wedge^2 \Sym_3(Q_{\G})\otimes N^{-5}\otimes R^{-10})=0$ follows from Lemma \ref{vanish} (vi).

\item We claim that $\Ho^2(G,E\otimes E)=0$. In fact, as before, we have that
\[E\otimes E=Rq_*(R^{-5}\otimes \wedge^2 Q_{\G}\otimes q^*E)\]\[=Rq_*(R^{-4}\otimes N \otimes q^*E)=Rq_*(R^{-4}\otimes N \otimes \Sym_3(Q_{\G})^{\vee})\]
where the last equality follows from $q^* E=\Sym_3(Q_{\G})^{\vee}$. Therefore, we have that $$\Ho^2(G,E\otimes E)=\Ho^2(\G, \Sym_3(Q_{\G})^{\vee}\otimes N\otimes R^{-4}).$$
The short exact sequence (\ref{eq3}) gives rise to a short exact sequence as follows
\[0 \rightarrow N\otimes R^{-7} \rightarrow \Sym_3(Q_{\G})^{\vee}\otimes N\otimes R^{-4}\rightarrow \Sym_2(Q_{\G})^{\vee}\otimes R^{-4}\rightarrow 0.\]By Lemma \ref{vanish} (i), we have $\Ho^2(\G,N\otimes R^{-7})=0$. To prove the claim, it suffices to prove that $\Ho^2(\G, \Sym_2(Q_{\G})^{\vee}\otimes R^{-4})=0$.  In fact, by the exact sequence \cite[5.11.5]{Compo}, we have an exact sequence as follows
\[ 0\rightarrow R^{-6}\rightarrow \Sym_2(Q_{\G})^{\vee}\otimes R^{-4}\rightarrow Q_{\G}^{\vee}\otimes N^{-1}\otimes R^{-4}\rightarrow 0.\]
By Lemma \ref{vanish} (i), we have $\Ho^2(\G,R^{-6})=0$. 

In the following, we show that $\Ho^2(\G, Q_{\G}^{\vee}\otimes N^{-1}\otimes R^{-4})=0$. In fact, by Proposition \ref{comprop}, we have a short exact sequence as follows
\[0\rightarrow N^{-1}\otimes R^{-5}\rightarrow Q_{\G}\otimes N^{-1}\otimes R^{-4} \rightarrow N^{-2}\otimes R^{-4} \rightarrow 0.\]
We conclude that  $\Ho^2(\G, Q_{\G}^{\vee}\otimes N^{-1}\otimes R^{-4})=0$ since \[\Ho^2(N^{-1}\otimes R^{-5})=\Ho^2(N^{-2}\otimes R^{-4})=0\]by Lemma \ref{vanish} (i). Therefore, we show that $\Ho^2(\G, \Sym_2(Q_{\G})^{\vee}\otimes R^{-4})=0$.
\item $\Ho^1(G,E)=0$ follows from \cite[Theorem 5.1]{Compo}. 
\end{enumerate}

\end{proof}
\begin{lemm}\label{pullback}
For any vector bundle $V$ on $G$, we have $\Ho^i(\G,q^*V)=\Ho^i(G,V)$.
\end{lemm}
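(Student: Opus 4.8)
The plan is to exploit the fact that $q\colon \G=\PP(Q)\to G$ is a projective bundle over $G$ — here $Q$ has rank $4$, so the fibres are copies of $\PP^3$ — and that for such a bundle the higher direct images of the structure sheaf are as simple as possible. First I would record the standard computation
\[
q_*\OO_{\G}=\OO_G, \qquad R^jq_*\OO_{\G}=0 \text{ for } j>0,
\]
which follows fibrewise from $\Ho^0(\PP^3,\OO_{\PP^3})=k$ and $\Ho^j(\PP^3,\OO_{\PP^3})=0$ for $j>0$, combined with the theorem on cohomology and base change (applicable since $q$ is proper and flat between noetherian schemes).

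Next I would invoke the projection formula. Since $V$ is locally free, hence flat over $\OO_G$, we get
\[
R^jq_*(q^*V)\;\cong\;V\otimes_{\OO_G}R^jq_*\OO_{\G},
\]
so that $q_*(q^*V)\cong V$ and $R^jq_*(q^*V)=0$ for all $j>0$.

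Finally I would plug this into the Leray spectral sequence for $q$,
\[
E_2^{p,j}=\Ho^p\!\left(G,\,R^jq_*(q^*V)\right)\;\Longrightarrow\;\Ho^{p+j}(\G,\,q^*V).
\]
Because every row with $j>0$ vanishes, the spectral sequence degenerates at $E_2$ and collapses onto the $j=0$ edge, giving the desired identification
\[
\Ho^i(\G,\,q^*V)\;=\;\Ho^i\!\left(G,\,q_*(q^*V)\right)\;=\;\Ho^i(G,V).
\]

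There is no real obstacle here: this is a routine application of the projection formula and the Leray spectral sequence. The only points requiring a word of care are that the projection formula for \emph{higher} direct images is being used, which is legitimate because $V$ is a vector bundle and therefore flat, and that the base-change computation of $R^jq_*\OO_{\G}$ is valid because all schemes in sight are noetherian and $q$ is proper and flat — all of which hold automatically in the present geometric setup.
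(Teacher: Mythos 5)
Your argument is correct and is essentially the paper's proof: both rely on $q_*\OO_{\G}=\OO_G$, the vanishing $R^jq_*\OO_{\G}=0$ for $j>0$, and the projection formula, with the paper phrasing the final step via $Rq_*$ rather than spelling out the Leray spectral sequence. One small slip: $Q$ has rank $2$ (so $q$ is a $\PP^1$-bundle, consistent with $\dim\G=9$ and $\dim G=8$), not rank $4$ with $\PP^3$ fibres; this does not affect the argument, since the needed direct-image computation holds for any projective bundle.
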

\begin{proof}
The lemma follows from the projection formula and $R^iq_*\OO_{\G}=0$ for $i>0$. In fact, we have
\[\Ho^i(G,V)=\Ho^i(G,V\otimes q_*\OO_G)=\Ho^i(G,q_*q^*V)=\Ho^i(G,Rq_*q^*V)=\Ho^i(\G,q^*V)\]where the second equality follows from $q_*\OO_{\G}=\OO_{G}$.
\end{proof}

\begin{lemm}\label {v2}
With the notations as above, we have that
$\Ho^0(F,\Omega^1_G|_F)=0$.

\end{lemm}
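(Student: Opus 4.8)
The plan is to reduce the vanishing of $\Ho^0(F,\Omega^1_G|_F)$ to cohomology computations on the Grassmannian $G$ via the Koszul resolution \eqref{ex}, exactly in the spirit of Lemma \ref{v1}. Since $F = \Fa(X_3)$ is cut out of $G$ by a regular section of $\Sym_3(Q) = E^\vee$, tensoring the Koszul complex \eqref{ex} by the vector bundle $\Omega^1_G$ gives a resolution of $\Omega^1_G|_F$ by the bundles $\Omega^1_G$, $E\otimes\Omega^1_G$, $\wedge^2 E\otimes\Omega^1_G$, $\wedge^3 E\otimes\Omega^1_G$, $\wedge^4 E\otimes\Omega^1_G$. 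By the usual syzygy (hypercohomology spectral sequence) argument, $\Ho^0(F,\Omega^1_G|_F)=0$ will follow once I check
\[\Ho^0(G,\Omega^1_G)=\Ho^1(G,E\otimes\Omega^1_G)=\Ho^2(G,\wedge^2 E\otimes\Omega^1_G)=\Ho^3(G,\wedge^3 E\otimes\Omega^1_G)=\Ho^4(G,\wedge^4 E\otimes\Omega^1_G)=0.\]
The first of these is immediate since $G$ is a Grassmannian (or one invokes Bott). The remaining four are the substance.

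To evaluate them I would push everything down to $\G$ as in Lemma \ref{v1}: using Lemma \ref{pullback} I may compute on $\G$ after pulling back, and then use the identity $E=R^1q_*(R^{-5})\otimes\wedge^2 Q$ together with the projection formula to rewrite each $\wedge^j E\otimes\Omega^1_G$ as $Rq_*$ of an explicit bundle of the form $(\text{Schur functor of }Q_\G)\otimes N^s\otimes R^t$ tensored with $q^*\Omega^1_G$, and then express $q^*\Omega^1_G$ via the relative cotangent sequence $0\to q^*\Omega^1_G\to\Omega^1_{\G/k}\to\Omega^1_{\G/G}\to 0$ (with $\Omega^1_{\G/G}$ a twist of $N$ and $R$, since $\G=\PP(Q)=\PP(K)$). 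This reduces every term to cohomology of the standard bundles appearing in Lemma \ref{vanish}, or to things directly accessible from \cite[5.7.1, 5.7.2]{Compo}, Serre duality on $\G$, and Bott vanishing. The bookkeeping parallels the cases in Lemma \ref{vanish} and Lemma \ref{v1}(i)--(iv).

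The main obstacle I anticipate is purely combinatorial rather than conceptual: $\Omega^1_G$ is not a line bundle but a rank-$8$ bundle on an $8$-dimensional Grassmannian, and its Schur-functor decomposition interacts with the twists $N^s\otimes R^t$ in a way that is more delicate than the line-bundle twists handled in Lemma \ref{vanish}. Concretely, $\Omega^1_G = M\otimes Q^\vee$ (where $M$ is the universal subbundle and $Q$ the quotient), so each $\wedge^j E\otimes\Omega^1_G$ becomes a sum of terms $\Sym_a(Q)\otimes\wedge^b(\text{something})\otimes M\otimes Q^\vee$, and one must either Littlewood--Richardson these into irreducible $GL(V)$-summands and apply Bott's algorithm weight by weight, or set up auxiliary short exact sequences (using $0\to M\to V_G\to Q\to 0$ and its exterior/symmetric powers) to dévissage $M\otimes Q^\vee$ into pieces already controlled. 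I expect the cleanest route is to tensor the fundamental sequence \eqref{fun} and its Koszul-type consequences with $\wedge^j E$ and chase the resulting long exact sequences, reducing each required vanishing to instances of Lemma \ref{vanish}, at the cost of several pages of the same ``straightforward but tedious'' case analysis the author has already flagged. There are no new ideas needed beyond what is in Lemma \ref{vanish}; the risk is only in the sheer number of $(s,t,i)$ triples to verify and in correctly tracking the twist $R^{-5}$ coming from the identity for $E$.
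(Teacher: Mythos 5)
Your plan is the same as the paper's: tensor the Koszul resolution \eqref{ex} by $\Omega^1_G$, reduce via the syzygy argument to the five vanishings $\Ho^j(G,\wedge^j E\otimes\Omega^1_G)=0$ for $j=0,\dots,4$, rewrite $\Omega^1_G=M\otimes Q^\vee$, push forward to $\G$ via Lemma~\ref{pullback} and the identity $E=R^1q_*(R^{-5})\otimes\wedge^2 Q$, and dévissage $M\otimes Q^\vee$ through the fundamental sequence \eqref{fun}. The ``several pages of tedious case analysis'' you defer is exactly the content of the paper's auxiliary Lemma~\ref{v} (eight further vanishings on $\G$ beyond those in Lemma~\ref{vanish}), which the paper checks one by one using the short exact sequences from Proposition~\ref{comprop}, Serre duality, \cite[5.7.1]{Compo}, and Bott vanishing; your sketch correctly identifies all the needed ingredients but does not carry out that verification.
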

\begin{proof}
By the exact sequence (\ref{ex}), we have an exact sequence as follows
\[\xymatrix{\wedge^4E \otimes \Omega_G\ar@{^{(}->}[r] & \wedge^3E\otimes \Omega_G\ar[r] & \wedge^2E\otimes \Omega_G\ar[r] & E\otimes \Omega_G\ar[r] & \Omega_G\ar@{->>}[r] &\Omega_G|_F}.\]
By the syzygy arugment, to prove the lemma, it suffices to show that
\[\Ho^4(\wedge^4E\otimes \Omega_G)=\Ho^3(\wedge^3E\otimes \Omega_G)=\Ho^2(\wedge^2 E\otimes \Omega_G)=\Ho^1(E\otimes \Omega_G)=\Ho^0(G, \Omega_G)=0.\] We first notice that $\Omega_G^1=Q^{\vee}\otimes M$.
\begin{enumerate}
\item By the exact sequence (\ref{fun}), we have that \[0\rightarrow Q^{\vee}\otimes M \rightarrow Q^{\vee} \rightarrow Q^{\vee}\otimes Q\rightarrow 0.\] We claim $\Ho^0(G,Q^{\vee})=0$. Therefore, we have $\Ho^0(G,\Omega_G^{1})=\Ho^0(G,Q^{\vee}\otimes M)=0$. In fact, by the exact sequence in Proposition \ref{comprop}, we have a short exact sequence \[0\rightarrow N^{-1}\rightarrow Q_{\G}^{\vee}\rightarrow R^{-1}\rightarrow 0.\]It follows from Lemma \ref{vanish} (i) that \[\Ho^0(\G,R^{-1})=\Ho^{0}(\G, N^{-1})=0.\]We have proved the claim.

\item We claim that $\Ho^1(G,E\otimes \Omega^1_{G})=0$. In fact,  as in \cite[Page 43]{Compo}, we have $E=R^1q_*(R^{-5})\otimes \wedge^2 Q$. Denote by $M_{\G}$ the pullback of $M$ via $q$. We conclude that \[\Ho^1(E\otimes \Omega_G)=\Ho^1(G,Rq_*(R^{-5}\otimes \wedge^2 Q_{\G}\otimes Q_{\G}^{\vee}\otimes M_{\G}))\]\[=\Ho^1(\G,R^{-5}\otimes Q_{\G}\otimes M_{\G})\] where the last equality follows from $Q_{\G}=\wedge^2Q_{\G}\otimes Q_{\G}^{\vee}$. By the short exact sequence\[ 0\rightarrow M_{\G}\otimes Q_{\G}\otimes R^{-5} \rightarrow Q_{\G}^{\oplus 6}\otimes R^{-5} \rightarrow Q_{\G}\otimes Q_{\G}\otimes R^{-5}\rightarrow 0,\]
to prove the claim, it suffices to prove that \[\Ho^0(\G,Q_{\G}\otimes Q_{\G}\otimes R^{-5})=\Ho^1(\G,Q_{\G}\otimes R^{-5})=0.\]In fact, it follows from Lemma \ref{v} (i) below.

\item We claim that $\Ho^2(G, \wedge^2 E\otimes \Omega_{G})=0$. In fact, by \cite[5.4.2]{Compo} and Lemma \ref{pullback}, we have
\[\Ho^2(G,\wedge^2 E\otimes \Omega_{G})=\Ho^2(\G,\wedge^2\Sym_3(Q_{\G})\otimes (\wedge^2 Q_{\G})^{-6}\otimes Q_{\G}^{\vee}\otimes M_{\G}).\]From the short exact sequence
\begin{equation} \label{eq1}
0\rightarrow M_{\G}\rightarrow V_{\G}\rightarrow Q_{\G}\rightarrow 0,
\end{equation}
 we have the exact cohomology sequence as follows
\[\Ho^1(\wedge^2\Sym_3(Q_{\G})\otimes (\wedge^2 Q_{\G})^{-6}\otimes Q_{\G}^{\vee}\otimes Q_{\G})\rightarrow \Ho^2(\wedge^2\Sym_3(Q_{\G})\otimes (\wedge^2 Q_{\G})^{-6}\otimes Q_{\G}^{\vee}\otimes M_{\G})\] \[\rightarrow \Ho^2(\wedge^2\Sym_3(Q_{\G})\otimes (\wedge^2 Q_{\G})^{-6}\otimes Q_{\G}^{\vee})^{\oplus 6}.\]By Lemma \ref{v} (ii) and (iii) below, we know the first and last term of this cohomology sequence are zeros. Therefore, we prove the claim.

\item We claim $\Ho^3(\wedge^3E\otimes \Omega_G)=0$. In fact, by \cite[5.4.2]{Compo}, Proposition \ref{comprop} and Lemma \ref{pullback}, we have
\[\Ho^3(G,\wedge^3E\otimes \Omega_G)=\Ho^3(\G, \Sym_3(Q_{\G})\otimes N^{-6}\otimes R^{-6}\otimes Q_{\G}^{\vee}\otimes M_{\G}).\] From the short exact sequence (\ref{fun}), we have an exact cohomology sequence
\[\Ho^2(\Sym_3(Q_{\G})\otimes N^{-6}\otimes R^{-6}\otimes Q_{\G}^{\vee}\otimes Q_{\G})\rightarrow \Ho^3(\G, \Sym_3(Q_{\G})\otimes N^{-6}\otimes R^{-6}\otimes Q_{\G}^{\vee}\otimes M_{\G})\] \[\rightarrow \Ho^3(\Sym_3(Q_{\G})\otimes N^{-6}\otimes R^{-6}\otimes Q_{\G}^{\vee})^{\oplus 6}.\] Since the first term and last term of this sequence are zeros by Lemma \ref{v} (iv) and (v), we prove the claim.

\item We claim $\Ho^4(\wedge^4E\otimes \Omega_G)=0$. In fact, by \cite[5.4.2]{Compo} and Lemma \ref{pullback}, we have
\[\Ho^4(\wedge^4E\otimes \Omega_G)=\Ho^4(\G,N^{-6}\otimes R^{-6}\otimes Q_{\G}^{\vee} \otimes M_{\G}).\]The exact sequence (\ref{fun}) gives rise to an exact cohomology sequence as follows
\[\Ho^3(N^{-6}\otimes R^{-6} \otimes Q_{\G}\otimes Q_{\G}^{\vee})\rightarrow \Ho^4(N^{-6}\otimes R^{-6} \otimes M_{\G}\otimes Q_{\G}^{\vee}) \rightarrow \Ho^4(Q_{\G}\otimes N^{-6}\otimes R^{-6})^{\oplus 6}\]where the first and last terms are zeros by Lemma \ref{v} (vii) (viii) below. Therefore, we prove the claim.

In summary, we prove the Lemma.

\end{enumerate}

\end{proof}

\begin{lemm}\label{v}
With the notations as before, we have that
\begin{enumerate}
\item $\Ho^0(\G, Q_{\G}\otimes Q_{\G}\otimes R^{-5})=\Ho^1(\G,Q_{\G}\otimes R^{-5})=0$
\item $\Ho^2(\G, \wedge^2\Sym_3(Q_{\G})\otimes (\wedge^2 Q_{\G})^{-6}\otimes Q_{\G}^{\vee})=0$
\item $\Ho^1(\G, \wedge^2\Sym_3(Q_{\G})\otimes (\wedge^2 Q_{\G})^{-6}\otimes Q_{\G}^{\vee}\otimes Q_{\G})=0$
\item $ \Ho^3(\G, \Sym_3(Q_{\G})\otimes N^{-6}\otimes R^{-6}\otimes Q_{\G}^{\vee})=0$
\item $\Ho^2(\G, \Sym_3(Q_{\G})\otimes N^{-6}\otimes R^{-6}\otimes Q_{\G}^{\vee}\otimes Q_{\G})=0$
\item $\Ho^5(\PP^5, \Sym_2(K)(-1))=0$
\item $ \Ho^4(\G, Q_{\G}\otimes N^{-6}\otimes R^{-6})=0$
\item $ \Ho^3(\G, N^{-6}\otimes R^{-6} \otimes Q_{\G}\otimes Q_{\G}^{\vee})=0$
\end{enumerate}
\end{lemm}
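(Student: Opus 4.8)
The plan is to establish each of the eight vanishing statements in Lemma \ref{v} by the same mechanism already exploited throughout Section 3: translate every bundle on $\G$ into a combination of the line bundles $N$, $R$ and the tautological bundles $Q_\G$, $M_\G$ via Proposition \ref{comprop} and the exact sequences \eqref{fun}, \eqref{eq3}, \cite[5.11.5]{Compo}, then reduce to the vanishing statements already collected in Lemma \ref{vanish}, invoking Serre duality on $\G$ (with $\omega_{\G/k}=N^{-5}\otimes R^{-7}$) and Bott vanishing on $P=\PP^5$ whenever a cohomology group is pushed down to $\PP^5$. Concretely, for (i) I would use the defining sequence $0\to N\to Q_\G\to R\to 0$ of Proposition \ref{comprop} to replace $Q_\G\otimes Q_\G\otimes R^{-5}$ and $Q_\G\otimes R^{-5}$ by extensions built from $N^a R^b$ with $a\in\{0,1\}$, $b\in\{-5,-4,-3\}$, and then apply Lemma \ref{vanish}(i)–(ii); note the degree-zero ($\Ho^0$) and degree-one ($\Ho^1$) cases are exactly the ranges where Lemma \ref{vanish}(i) (for $s=0$, $t\le -1$; for $s=1$, $t\neq -1$) and Lemma \ref{vanish}(ii) apply.

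For (ii) and (iii), I would first rewrite $\wedge^2\Sym_3(Q_\G)\otimes(\wedge^2 Q_\G)^{-6}$ as $\wedge^2\Sym_3(Q_\G)\otimes N^{-6}\otimes R^{-6}$ using $\wedge^2 Q_\G=N\otimes R$ from Proposition \ref{comprop}, so that these become $\Ho^2(\G,\wedge^2\Sym_3(Q_\G)\otimes N^{-6}\otimes R^{-6}\otimes Q_\G^\vee)$ and $\Ho^1$ of the same twisted by an extra $Q_\G$; then dualize $Q_\G^\vee=Q_\G\otimes(\wedge^2 Q_\G)^{-1}=Q_\G\otimes N^{-1}\otimes R^{-1}$, reducing (ii) to $\Ho^2(\G,\wedge^2\Sym_3(Q_\G)\otimes Q_\G\otimes N^{-7}\otimes R^{-7})$, which after one application of $0\to N\to Q_\G\to R\to 0$ lands in Lemma \ref{vanish}(vi) with $s=-7$, $i=2$ (and $s=-6$, which is also covered). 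Statement (iii) carries an extra factor of $Q_\G$, i.e. $Q_\G\otimes Q_\G^\vee$; I would split $Q_\G\otimes Q_\G^\vee$ as an extension of $\OO_\G$ by $\mathrm{Sym}$-type pieces — more simply, use $Q_\G\otimes Q_\G^\vee = \mathrm{End}(Q_\G)$ which sits in $0\to\OO_\G\to Q_\G\otimes Q_\G^\vee\to \mathfrak{sl}(Q_\G)\to 0$, or just tensor the two-step filtration of $Q_\G$ against $Q_\G^\vee$ — and reduce again to Lemma \ref{vanish}(v)–(vi). Items (iv), (v), (vii), (viii) are the analogous computations with $\Sym_3(Q_\G)$ (resp. $Q_\G$) in place of $\wedge^2\Sym_3(Q_\G)$: dualize $Q_\G^\vee$, use \eqref{eq3} or the sequence $0\to\Sym_2 Q_\G\otimes N\to \Sym_3 Q_\G\to R^3\to 0$, and feed the pieces into Lemma \ref{vanish}(iii)–(iv). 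Item (vi), $\Ho^5(\PP^5,\Sym_2(K)(-1))=0$ with $K=\Omega^1_P(1)$, is a pure Bott-vanishing / Serre-duality statement on $\PP^5$: by Serre duality it equals $\Ho^0(\PP^5,\Sym_2(K)^\vee(1-6)\otimes\omega_P^{-1})^\vee=\Ho^0(\PP^5,\Sym_2(T_P(-1))(-5)\otimes\OO(6))^\vee$, which after simplification is a space of sections of a negative or Bott-vanishing twist of $\Sym_2$ of the tangent bundle and hence zero; I would simply cite \cite[Bott's Formula]{Bott} and \cite[5.7.1]{Compo} as is done elsewhere.

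The main obstacle will be purely bookkeeping: each item requires peeling off several short exact sequences, and one must track the exact twist $(s,t)$ at every stage to make sure it falls into one of the (rather restrictive) ranges allowed in Lemma \ref{vanish} — in particular the $i=4,9$ and $i=8,9$ exceptional cohomological degrees for $s=-5,-6$, and the sign conditions on $t$. The danger is an off-by-one in the exponent of $N$ or $R$ after a dualization (since $Q_\G^\vee=Q_\G\otimes N^{-1}\otimes R^{-1}$ shifts both) or after using $\wedge^2 Q_\G=N\otimes R$ and $\omega_{\G/k}=N^{-5}\otimes R^{-7}$ in a Serre-duality step. I would therefore carry out each of the eight computations explicitly but tersely, in the same telegraphic style as the proof of Lemma \ref{vanish}, displaying only the chain of identifications and the final appeal to Lemma \ref{vanish}; no genuinely new idea is needed beyond what Section 3 already sets up.
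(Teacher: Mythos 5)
Your overall plan is essentially the same as the paper's: filter $Q_\G$ and $Q_\G^\vee$ via $0\to N\to Q_\G\to R\to 0$ and its dual, rewrite everything in terms of $N^s\otimes R^t$, and feed the resulting graded pieces into Lemma \ref{vanish}. For items (ii)--(v) and (vii)--(viii) this works exactly as you describe, and your alternative $Q_\G^\vee\cong Q_\G\otimes N^{-1}\otimes R^{-1}$ identification is equivalent to the paper's use of the dual short exact sequence $0\to R^{-1}\to Q_\G^\vee\to N^{-1}\to 0$.

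However there is a concrete gap in your treatment of (i). You assert that filtering $Q_\G\otimes Q_\G\otimes R^{-5}$ produces only pieces $N^a R^b$ with $a\in\{0,1\}$, but the two-step filtration of $Q_\G$ applied to both factors yields the graded piece $N^2\otimes R^{-5}$, i.e.\ $s=2$. This value of $s$ is outside the range of Lemma \ref{vanish}(i), which only reaches $s=1$ (and Lemma \ref{vanish}(ii) requires $-5\le s\le -2$ and so does not help either). This is precisely where the paper has to do extra work: it pushes down to $\PP^5$ via $p_*N^2=\Sym_2(K)$, identifies $\Ho^0(\G,N^2\otimes R^{-5})=\Ho^0(\PP^5,\Sym_2(K)(-5))$, and shows this vanishes by filtering $K\otimes K(-5)$ into $\wedge^2K(-5)$ and $\Sym_2(K)(-5)$ and applying the Euler sequence together with Bott vanishing on $\PP^5$. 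Without this extra $\PP^5$-computation your reduction does not close. A similar caution applies to (vi): Bott's formula governs $\Omega^p(t)$, not $\Sym_2(K)(t)$, so invoking it directly after Serre duality leaves a step unjustified; the paper again filters $K\otimes K$ and uses the Euler sequence before appealing to Bott. These are both fixable within your framework, but they are real steps that must be carried out rather than waved away as pure bookkeeping.
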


\begin{proof}
For (i), by Proposition \ref{comprop}, we have a short exact sequence as follows
\[0\rightarrow N\otimes Q_{\G}\otimes R^{-5} \rightarrow Q_{\G}\otimes Q_{\G}\otimes R^{-5}\rightarrow R^{-4}\otimes Q_{\G}\rightarrow 0.\]
We claim $\Ho^0(R^{-4}\otimes Q_{\G})=0$. In fact, it follows from
the exact sequence\[(0=)\Ho^0(\PP^5,\Omega_{\PP^5}(-3))=\Ho^0(N\otimes R^{-4}) \rightarrow \Ho^{0}(R^{-4}\otimes Q_{\G})\rightarrow \Ho^0(R^{-3})(=0)\]
where the first term is equal to zero by the Bott vanishing theorem and \cite[5.7.1]{Compo}, the last term is equal to zero by Lemma \ref{vanish} (i). Therefore, we conclude that $\Ho^0(Q_{\G}\otimes Q_{\G}\otimes R^{-5})=0$ since $\Ho^0(R^{-4}\otimes Q_{\G})=0$.\\
It follows that $\Ho^0(Q_{\G}\otimes N\otimes R^{-5})=0$ from the exact sequence
\[\Ho^0(N^2\otimes R^{-5})\rightarrow \Ho^0(Q_{\G}\otimes N\otimes R^{-5}) \rightarrow \Ho^0(N\otimes R^{-4})\]
where the last term is zero by Lemma \ref{vanish} (i) and the first term is zero by
\[\Ho^0(N^2\otimes R^{-5})=\Ho^0(\PP^5,\Sym_2(K)(-5))=0.\]
In the following, we show that $\Ho^0(\PP^5,\Sym_2(K)(-5))=0$.

The fundamental sequence  \cite[Page 36]{Compo} gives rise to an exact sequence as follows \[0\rightarrow \Ho^0(K\otimes K(-5))\rightarrow \Ho^0(K(-5)^{\oplus 6})(=\Ho^0(\Omega^1(-4))^{\oplus 6}=0).\]It follows that $\Ho^0(K\otimes K(-5))=0$. Moreover, we have a short exact sequence
\begin{equation} \label{sym}
0\rightarrow \wedge^2K(-5)\rightarrow K\otimes K(-5) \rightarrow \Sym_2(K)(-5) \rightarrow 0
\end{equation}
which gives rise to an exact sequence\[(0=)\Ho^0(K\otimes K(-5))\rightarrow \Ho^0(\Sym_2(K)(-5)) \rightarrow \Ho^1(\wedge^2K(-5))(=\Ho^1(\Omega^2_{\PP^5}(-3))=0)\] where the last equality $\Ho^1(\Omega^2_{\PP^5}(-3))=0$ follows from the Bott vanishing theorem. Therefore, we show that \[\Ho^0(\PP^5,\Sym_2(K)(-5))=0.\]
\\
For (ii), since we have $\wedge^2Q_{\G}=N\otimes R$ and a short exact sequence\begin{equation} \label{shortstar1}
0\rightarrow R^{-1}\rightarrow Q_{\G}^{\vee}\rightarrow N^{-1}\rightarrow 0,
\end{equation}
it gives rise to an exact sequence
\[(0=)\Ho^2(\wedge^2 \Sym_3\otimes N^{-6}\otimes R^{-7})\rightarrow \Ho^2(\wedge^2\Sym_3(Q_{\G})\otimes (\wedge^2 Q_{\G})^{-6}\otimes Q_{\G}^{\vee})\]\[ \rightarrow \Ho^2(\wedge^2 \Sym_3\otimes N^{-7}\otimes R^{-6})(=0),\] where the last term is equal to zero by Lemma \ref{vanish} (vi). We prove (ii).\\

For (iii), as in the proof of (ii), the short exact sequence (\ref{shortstar1}) gives rise to an exact cohomology sequence as follows:
\[(0=)\Ho^1(\wedge^2 \Sym_3(Q_{\G})\otimes N^{-6}\otimes R^{-7}\otimes Q_{\G})\rightarrow \Ho^1(\wedge^2\Sym_3(Q_{\G})\otimes (\wedge^2 Q_{\G})^{-6}\otimes Q_{\G}^{\vee}\otimes Q_{\G})\]\[
\rightarrow \Ho^1(\wedge^2 \Sym_3(Q_{\G})\otimes N^{-7}\otimes R^{-6}\otimes Q_{\G})(=0)\]
where the first term is zero by the following exact sequence associated to the short exact sequence in Proposition \ref{comprop}, see Lemma \ref{vanish} (vi), \[(0=)\Ho^1(\wedge^2 \Sym_3(Q_{\G})\otimes N^{-5}\otimes R^{-7})\rightarrow \Ho^1(\wedge^2 \Sym_3(Q_{\G})\otimes N^{-6}\otimes R^{-7}\otimes Q_{\G})\] \[ \rightarrow \Ho^1(\wedge^2 \Sym_3(Q_{\G})\otimes N^{-6}\otimes R^{-6})(=0).\]
Similarly, it follows that the last term\[ \Ho^1(\wedge^2 \Sym_3(Q_{\G})\otimes N^{-7}\otimes R^{-6}\otimes Q_{\G})\]is zero from the exact sequence
\[(0=)\Ho^1(\wedge^2 \Sym_3(Q_{\G})\otimes N^{-6}\otimes R^{-6}) \rightarrow \Ho^1(\wedge^2 \Sym_3(Q_{\G})\otimes N^{-7}\otimes R^{-6}\otimes Q_{\G})\]\[ \rightarrow \Ho^1(\wedge^2 \Sym_3(Q_{\G})\otimes N^{-7}\otimes R^{-5})(=0)\]where the first and the last terms are zeros by Lemma \ref{vanish}(vi). We prove (iii).\\
For (iv), the exact sequence in Proposition \ref{comprop} gives rise to an exact sequence
\[ \Ho^3(\G, \Sym_3(Q_{\G})\otimes N^{-6}\otimes R^{-7})\rightarrow  \Ho^3(\G, \Sym_3(Q_{\G})\otimes N^{-6}\otimes R^{-6}\otimes Q_{\G}^{\vee})\]\[ \rightarrow  \Ho^3(\G, \Sym_3(Q_{\G})\otimes N^{-7}\otimes R^{-6})\] where the first and last terms are zeros by Lemma \ref{vanish} (iv). Therefore, we prove (iv).

For (v), Lemma \ref{pullback} and the exact sequence in Propositon \ref{comprop} give rise to an exact cohomology sequence as follows (denote $\Sym_3(Q_{\G})$ by $\Sym_3$):
\begin{equation}\label{five}
\xymatrix{\ldots \Ho^2(\Sym_3\otimes N^{-6}\otimes R^{-7}\otimes Q_{\G})\ar[r]& \Ho^2(\Sym_3\otimes N^{-6}\otimes R^{-6}\otimes Q_{\G}\otimes Q_{\G}^{\vee})\ar[dl] \\
\Ho^2(\Sym_3\otimes N^{-7}\otimes R^{-6}\otimes Q_{\G}) \ldots.}
\end{equation}
The term \[\Ho^2(\Sym_3\otimes N^{-6}\otimes R^{-7}\otimes Q_{\G})\]is zero by the exact sequence \[(0=)\Ho^3(\Sym_3\otimes N^{-5}\otimes R^{-7})\rightarrow \Ho^2(\Sym_3\otimes N^{-6}\otimes R^{-7}\otimes Q_{\G}) \rightarrow \Ho^2(\Sym_3\otimes N^{-6}\otimes R^{-6})(=0)\]which follows from Lemma \ref{vanish} (iv), Lemma \ref{pullback} and the short exact sequence in Proposition \ref{comprop}.

Similarly, the term \[\Ho^2(\Sym_3\otimes N^{-7}\otimes R^{-6}\otimes Q_{\G})\]in the sequence (\ref{five}) is also zero by the exact cohomology sequence
\[\Ho^2(\Sym_3\otimes N^{-6}\otimes R^{-6})\rightarrow \Ho^2(\Sym_3\otimes N^{-7}\otimes R^{-6}\otimes Q_{\G})\rightarrow \Ho^2(\Sym_3\otimes N^{-7}\otimes R^{-5}).\]
It follows from Lemma \ref{vanish} (iv) that the first and the last terms of this cohomology sequence are zeros.\\

For (vi), a similar exact sequence to (\ref{sym}) gives rise to an exact sequence of cohomology groups
\[\Ho^5(\PP^5,(\wedge^2K)(-1))\rightarrow \Ho^5(\PP^5, (K\otimes K)(-1))\rightarrow \Ho^5(\PP^5, \Sym_2(K)(-1))\rightarrow 0.\]The first term \[\Ho^5(\PP^5,(\wedge^2K)(-1))\]is zero by the Bott vanishing theorem and the exact sequence
\[(0=)\Ho^4(K)\rightarrow \Ho^5(K\otimes K(-1)) \rightarrow \Ho^5(K(-1))^{\oplus 6}(=0)\]which is associated to the short exact sequence
\begin{equation}\label{five2}
K\otimes K(-1)\rightarrow (K(-1))^6\rightarrow K\rightarrow 0.
\end{equation} The short exact sequence (\ref{five2}) follows from the Euler sequence
\[0\rightarrow \Omega_{\PP^5}^1\rightarrow \OO_{\PP^5}(-1)^{\oplus 6}\rightarrow \OO_{\PP^5}\rightarrow 0\]and the fact $K=\Omega_{\PP^5}^1(1)$.\\

For (vii), the exact sequence in Proposition \ref{comprop} gives rise to the exact sequence of cohomology groups as follows
\[\Ho^4(\G, N^{-6}\otimes R^{-7}) \rightarrow \Ho^4(\G, Q_{\G}\otimes N^{-6}\otimes R^{-6}) \rightarrow \Ho^4(\G, N^{-7}\otimes R^{-6}) \]
where the first term is equal to $\Ho^5(\G,N)$ by Serre duality. It follows from \cite[5.7.1]{Compo} and the Bott vanishing theorem that \[\Ho^5(\G,N)=\Ho^5(\PP^5, \Omega_{\PP^5}^1(1))=0.\] The last term $\Ho^4(\G, N^{-7}\otimes R^{-6})$ is $\Ho^5(\PP^5, \Sym_2(K)(-1))$ by Serre duality and \cite[5.7.1]{Compo}. By (vi), we know $\Ho^5(\PP^5, \Sym_2(K)(-1))$ is zero, therefore, we prove (vii).\\

For (viii), the exact sequence in Proposition \ref{comprop} gives rise to an exact sequence of cohomology groups as follows
\[\Ho^3(Q_{\G}\otimes N^{-6}\otimes R^{-7})\rightarrow \Ho^3(N^{-6}\otimes R^{-6}\otimes Q_{\G}\otimes Q_{\G}^{\vee}) \rightarrow \Ho^3(Q_{\G}\otimes N^{-7}\otimes R^{-6}).\]We claim that the first and the last terms are zeros. In fact, the exact sequence in Proposition \ref{comprop} gives rise to the following exact sequences of cohomology groups
\[\Ho^3(N^{-5}\otimes R^{-7})\rightarrow \Ho^3(Q_{\G}\otimes N^{-6}\otimes R^{-7})\rightarrow \Ho^3(N^{-6}\otimes R^{-6})\]
\[\Ho^3(N^{-6}\otimes R^{-6})\rightarrow \Ho^3(Q_{\G}\otimes N^{-7}\otimes R^{-6})\rightarrow \Ho^3(N^{-7}\otimes R^{-5}).\]
The first and the last terms of these two sequences are in the form of $\Ho^6(\G, N^{\geq 0}\otimes R^{*})$ which are equal to $\Ho^6(\PP^5, -)$ by \cite[5.7.1]{Compo}. Therefore, they are zeros. We prove the claim and (viii).
\end{proof}

\section{Deformations of Automorphisms}
Suppose that $X$ is a smooth projective scheme over the Witt ring $W(k)=W$ with relative dimension four.
Recall the following assumption
\begin{Assumption}\label{assumption}
We assume that the Hodge-de Rham spectral sequences of $X/W$ degenerates at $E_1$ and the terms are locally free, so that the Hodge and de Rham cohomology sheaves commute with base change.\\
\end{Assumption}
We have that
\begin{theorem} \cite[Theorem 5.5]{PANL}.\label{corolifting}
Suppose that $X$ is a smooth projective scheme over the Witt ring $W(k)=W$ with relative dimension four. Let $X_0$ be the special fiber over $k$ and $f_0$ be an automorphism of $X_0$. Moreover, we assume that the map \[\cris^4(f_0): \cris^4(X_0/W)\rightarrow \cris^4(X_0/W)\] preserves the Hodge filtrations under the natural identification $\cris^4(X_0/W)\cong \Ho^4_{\DR}(X/W)$. 
If the infinitesmal Torelli Theorem holds for $X_0$, i.e., the cup product\[\Ho^1(X_0,T_{X_0})\hookrightarrow
\mathrm{Hom}(\Ho^q(X_0,\Omega^p_{X_0}),\Ho^{q+1}(X_0,\Omega^{p-1}_{X_0}))\]is injective for some $p$ and $q$ with $p+q=4$, then one can lift the automorphism $f_0$ to an automorphism $f:X\rightarrow X$ over $W$. 
\end{theorem}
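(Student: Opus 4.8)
The plan is to lift $f_0$ one infinitesimal step at a time along the tower $W_n := W/p^{n+1}W$. Write $X_n := X\times_W W_n$ and suppose inductively that an automorphism $f_n : X_n\to X_n$ reducing to $f_0$ has been built. Since the scheme $X_{n+1}=X\times_W W_{n+1}$ is already given, lifting $f_n$ to $f_{n+1}$ is a \emph{morphism}-lifting problem, not a scheme-lifting one; standard deformation theory places the obstruction in
\[ \Ho^1\!\big(X_0,\, f_0^{*}T_{X_0}\big)\otimes_k p^{n}W_{n+1}\;\cong\;\Ho^1(X_0,T_{X_0}), \]
where we use $f_0^{*}T_{X_0}\cong T_{X_0}$ because $f_0$ is an automorphism, and the set of lifts, once nonempty, is a torsor under $\Ho^0(X_0,T_{X_0})$. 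Any lift reducing to $f_0$ is automatically an automorphism of $X_{n+1}$. A compatible system $\{f_n\}$ then yields an automorphism of the $p$-adic formal completion $\widehat{X}$, which algebraizes to $f:X\to X$ by Grothendieck's existence theorem, as $X$ is projective over the complete ring $W$. So the whole proof reduces to the vanishing of the obstruction class $\obs(f_n)\in\Ho^1(X_0,T_{X_0})$.

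To kill it I would feed in the crystalline hypothesis. Under the comparison $\cris^4(X_0/W)\cong\Ho^4_{\DR}(X/W)$ — available because Assumption~\ref{assumption} makes de Rham cohomology a locally free crystal — the operator $\cris^4(f_0)$ is a $W$-linear automorphism of $\Ho^4_{\DR}(X/W)$ preserving the Hodge filtration $F^{\bullet}$. Reducing modulo $p^{n+1}$ gives a \emph{filtered} $W_{n+1}$-linear automorphism $\phi_{n+1}$ of $\Ho^4_{\DR}(X_{n+1}/W_{n+1})$. On the other hand crystalline cohomology of $X_0$ is insensitive to the chosen lift, so functoriality gives $\cris^4(f_n)\equiv\cris^4(f_0)\pmod{p^{n+1}}$; under the comparison with $\Ho^4_{\DR}(X_n/W_n)$ this identifies the de Rham pullback $f_n^{*}$ with the reduction $\phi_n$ of $\phi_{n+1}$. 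Hence the filtered automorphism $\phi_n=f_n^{*}$ of $\big(\Ho^4_{\DR}(X_n/W_n),F^{\bullet}_n\big)$ \emph{does} admit a filtered extension to level $n+1$, namely $\phi_{n+1}$, so the purely linear-algebraic obstruction to extending $\phi_n$ — which lives in $\bigoplus_p\mathrm{Hom}(\mathrm{gr}^p_F,\,\Ho^4_{\DR}(X_0/k)/F^p)$ — vanishes.

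The remaining, and crucial, ingredient is an identification of obstruction theories: applying the functor $\Ho^4_{\DR}(-)$ and invoking Griffiths transversality for the Gauss--Manin connection over the Artinian base $W_{n+1}$, one obtains a commutative square whose horizontal arrow is exactly the cup-product (period) map
\[ \mu:\Ho^1(X_0,T_{X_0})\longrightarrow\bigoplus_{p+q=4}\mathrm{Hom}\!\big(\Ho^q(X_0,\Omega^p_{X_0}),\Ho^{q+1}(X_0,\Omega^{p-1}_{X_0})\big), \]
and which carries $\obs(f_n)$ to a sign times the filtered obstruction of $\phi_n$. Since the latter vanishes by the previous paragraph, $\mu(\obs(f_n))=0$; and the infinitesimal Torelli hypothesis — injectivity of the component of $\mu$ in some bidegree $(p,q)$ with $p+q=4$, which a fortiori makes $\mu$ itself injective — forces $\obs(f_n)=0$. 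The induction therefore closes and $f$ exists.

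I expect the genuine work to be in that last commutative square: one must match the deformation-theoretic obstruction to lifting the pair $(X_n,f_n)$ with the deformation obstruction of the filtered linear datum $(\Ho^4_{\DR}(X_n/W_n),F^{\bullet}_n,f_n^{*})$, checking that $\mu$ intertwines them and that $\phi_{n+1}$ is the correct candidate lift on the cohomological side; careful tracking of the twist by $p^{n}W_{n+1}$ and the verification that a single bidegree of Torelli suffices are the lower-order technicalities. This is precisely the content of \cite[Theorem~5.5]{PANL}.
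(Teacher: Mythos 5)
The paper does not prove Theorem~\ref{corolifting}; it is imported as \cite[Theorem~5.5]{PANL} and used as a black box, so there is no in-paper proof to compare your attempt against. Your sketch is nonetheless the standard strategy one expects behind such a statement, and it is structured correctly: lift $f_0$ step by step along the tower $W_n$, locate the obstruction $\obs(f_n)$ to lifting the morphism between the given thickenings in $\Ho^1(X_0,T_{X_0})$, identify $\mu(\obs(f_n))$ with the obstruction to lifting $f_n^{*}$ as a filtered endomorphism of $\Ho^4_{\DR}(X_{n+1}/W_{n+1})$, observe that this filtered obstruction vanishes because $\cris^4(f_0)$ already supplies a filtration-preserving lift at every finite level, and finally invoke infinitesimal Torelli (injectivity of one bidegree component of $\mu$, hence of $\mu$ into the direct sum) to conclude $\obs(f_n)=0$. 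You correctly isolate the genuine content in the compatibility of the geometric and cohomological obstruction theories, proved via Griffiths transversality of the Gauss--Manin connection together with Assumption~\ref{assumption}, which guarantees that the filtered pieces are locally free and commute with base change so that $\Ho^4_{\DR}(X_n/W_n)$ and its filtration really are the reductions of $\Ho^4_{\DR}(X/W)$. Two points deserve tightening if you were to write this out. First, the group $\bigoplus_p\mathrm{Hom}(\mathrm{gr}^p_F,\Ho^4_{\DR}(X_0/k)/F^p)$ in which you place the filtered obstruction is a priori larger than the target of $\mu$, which by Griffiths transversality lands only in the $\mathrm{gr}^{p-1}_F$-summand; you must verify that the obstruction to lifting $f_n^{*}$ lies in that transversal subgroup, which is automatic because $f_n^{*}$ comes from an actual deformation of the geometry, but this needs a sentence. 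Second, your remark that injectivity in a single bidegree ``a fortiori makes $\mu$ itself injective'' is correct precisely because $\mu$ maps into the \emph{direct sum} and you are showing that the entire image vanishes, but as phrased it reads like a non-sequitur; state that the injective component alone suffices since the filtered obstruction is shown to vanish in every summand.
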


Let $X_3$ be a smooth cubic fourfold.
\begin{lemm}\label{fre}
Over complex numbers $\C$, the kernel $\Ker (s)$ of the natural map$$s:\A(X_3)\rightarrow \A(\mathrm{H}^4_{sing}(X_3,\mathbb{Q}))$$ is invariant for any smooth deformation of $X_3$.
\end{lemm}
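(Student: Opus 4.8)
The plan is to study how the kernel of $s$ behaves in a smooth family $\mathcal{X} \to B$ of cubic fourfolds with $\mathcal{X}_b = X_3$, where $B$ is connected (one may take $B$ a small ball, or work over the universal family after passing to a suitable cover). The key point is that an automorphism $g$ of $X_3$ with $g^* = \mathrm{Id}$ on $\mathrm{H}^4_{sing}(X_3,\mathbb{Q})$ should fit into a family of automorphisms $g_t$ of the nearby fibers $\mathcal{X}_t$, each again acting trivially on $\mathrm{H}^4_{sing}(\mathcal{X}_t,\mathbb{Q})$, so that $\mathrm{Ker}(s)$ is identified along the family. First I would invoke the deformation-theoretic input of Theorem \ref{corolifting}: the infinitesimal Torelli theorem holds for smooth cubic fourfolds (the cup product $\Ho^1(X_3,T_{X_3}) \to \mathrm{Hom}(\Ho^{1}(X_3,\Omega^3_{X_3}), \Ho^{2}(X_3,\Omega^2_{X_3}))$ is injective, which is the classical statement for cubic hypersurfaces), and an automorphism $g$ lying in $\mathrm{Ker}(s)$ acts trivially on $\Ho^4_{sing}(X_3,\mathbb{C})$, hence \emph{a fortiori} preserves the Hodge filtration. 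Therefore $g$ deforms; the subtlety is that Theorem \ref{corolifting} is phrased for a single infinitesimal/Witt-ring lift, so I would instead use the complex-analytic analogue: the obstruction to deforming $g$ along a tangent vector $v \in \Ho^1(X_3,T_{X_3})$ of $B$ lies in a group measuring the failure of $v$ and $g_*v$ to agree, and triviality of $g^*$ on cohomology together with infinitesimal Torelli forces $g_*v = v$, killing the obstruction. Iterating (or using versality of the family of pairs), one produces $g_t \in \Aut(\mathcal{X}_t)$ for all $t$ near $b$.

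The second step is to check that the deformed automorphism $g_t$ still acts trivially on $\Ho^4_{sing}(\mathcal{X}_t,\mathbb{Q})$. This follows from the local constancy of the local system $R^4\pi_*\mathbb{Q}$ on $B$: the action of $g_t$ on $\Ho^4_{sing}(\mathcal{X}_t,\mathbb{Q})$ varies continuously (indeed locally constantly, since $\Aut(\mathcal{X}_t)$ is finite by Lemma \ref{autlin}, so the monodromy of the family of automorphisms is through a discrete group) and is the identity at $t = b$, hence is the identity for all $t$ near $b$. Consequently the assignment $g \mapsto g_t$ gives an injection $\mathrm{Ker}(s_b) \hookrightarrow \mathrm{Ker}(s_t)$; running the same argument in the other direction (deforming automorphisms of $\mathcal{X}_t$ back to $\mathcal{X}_b$) yields a bijection, so $\mathrm{Ker}(s)$ is locally constant on $B$, and by connectedness it is constant. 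This is exactly the assertion that $\mathrm{Ker}(s)$ is invariant under smooth deformation.

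The main obstacle I anticipate is the deformation step: one must ensure not merely that $g$ lifts to \emph{some} first-order deformation, but that it lifts compatibly with a chosen deformation of $X_3$, i.e. that the deformation of the pair $(X_3,g)$ maps \emph{submersively} onto the deformation of $X_3$ (equivalently, that the fixed subspace $\Ho^1(X_3,T_{X_3})^{g} \subseteq \Ho^1(X_3,T_{X_3})$ is everything). This is where infinitesimal Torelli is essential: $g$ acts on $\Ho^1(X_3,T_{X_3})$ and on each $\Ho^q(X_3,\Omega^p_{X_3})$, the cup product is $g$-equivariant, and since $g$ is trivial on the target $\mathrm{Hom}(\Ho^{q}(X_3,\Omega^p_{X_3}),\Ho^{q+1}(X_3,\Omega^{p-1}_{X_3}))$ (as $g^* = \mathrm{Id}$ on all of $\Ho^4_{sing}$, hence on all Hodge pieces), injectivity of the cup product forces $g$ to act trivially on $\Ho^1(X_3,T_{X_3})$ as well. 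Thus the space of infinitesimal deformations of $X_3$ is entirely $g$-fixed, the family of pairs dominates the family of fourfolds, and the deformation $g_t$ exists over the whole base. I would also remark that the same argument applies verbatim to $\Fa(X_3)$ using Proposition \ref{nsgroup} to control its de Rham cohomology; but for the statement as phrased only the cubic fourfold itself is needed.
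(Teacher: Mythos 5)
Your proof is correct, but it takes a genuinely different route from the paper. The paper's argument transfers the problem to the Fano variety of lines $\Fa(X_3)$: it shows that the incidence correspondence, together with the Abel--Jacobi isomorphism $q_*p^*\colon \Ho^4_{sing}(X_3,\Q)\simeq \Ho^2_{sing}(\Fa(X_3),\Q)$ and the projective-geometric recovery of $X_3$ from $(\Fa(X_3),\OO_{\Fa(X_3)}(1))$ via Chow's theorem, identifies $\Ker(s)$ with $\Ker\bigl(\Aut(\Fa(X_3))\to\Aut(\Ho^2_{sing}(\Fa(X_3),\Q))\bigr)$; since $\Fa(X_3)$ is a hyperk\"ahler fourfold (Beauville--Donagi), the deformation invariance of that kernel is then cited from Hassett--Tschinkel \cite[Theorem 2.1]{BT}. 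You instead argue directly on $X_3$: an element $g\in\Ker(s)$ acts trivially on each Hodge piece, hence, by $g$-equivariance of the cup product and the injectivity furnished by infinitesimal Torelli for cubic fourfolds, $g$ acts trivially on $\Ho^1(X_3,T_{X_3})$; therefore $\Ho^1(T_{X_3})^g=\Ho^1(T_{X_3})$, the forgetful map $\mathrm{Def}(X_3,g)\to\mathrm{Def}(X_3)$ is smooth (obstructions $\Ho^2(T_{X_3})^g\hookrightarrow\Ho^2(T_{X_3})$ inject), so $g$ propagates to any smooth deformation, and local constancy of the induced action in the local system $R^4\pi_*\Z$ together with finiteness of $\Aut$ keeps the deformed $g_t$ in $\Ker(s_t)$. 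Your argument is more self-contained and avoids the hyperk\"ahler citation entirely; indeed it is essentially the complex-analytic shadow of the Witt-ring lifting criterion the paper uses for Theorem \ref{mainthmmap}. What the paper's route buys in exchange is the identification $\Ker(s)\simeq\Ker(t)$ itself, which is reused later (notably in the proof of Theorem \ref{mainthmmap} to transfer faithfulness between $X_3$ and $\Fa(X_3)$), so the Fano-side detour is not wasted effort in the paper's global economy even if it is avoidable for this particular lemma.
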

\begin{proof}
Denote by $t$ the map \[\A(\Fa(X_3))\rightarrow \A(\mathrm{H}_{sing}^2(\Fa(X_3),\Q)).\] We claim that $\Ker (s)\simeq \Ker(t)$.

In fact, we have a canonical map \[j:\A(X_3)=\Aut_L(X_3)\rightarrow \A(\Fa(X_3)),\]see Lemma \ref{re}. It is clear that we have a commutative diagram as follows (cf. \ref{abjcoh}): 
\[\xymatrix{ \Fa(X_3)\ar[d]_{j(g)} & \mathrm{L}\ar@{.>}[d]^h \ar[l]_q \ar[r]^p & X_3\ar[d]^g\\
\Fa(X_3) & \mathrm{L}\ar[l]_q \ar[r]^p & X_3}\]
where $g\in \A(X_3)=\A_L(X_3)$ and $q:\mathrm{L}\rightarrow \Fa(X_3)$ is the universal bundle of lines. We have a morphism \[h:\mathrm{L}\rightarrow \mathrm{L}\]associating to a line $l$ in $X_3$ to the line $g(l)$ in $X_3$. 
Note that the inverse map of $j(g)^*$ is $j(g)_*$. Therefore, we have \[j(g)^* \circ (q_*p^*)=q_*\circ h^*\circ p^*=(q_*p^*)\circ g^*\]on $\Ho^4_{sing}(X_3,\Q)$. Since the Abel-Jacobi map \[q_*p^*:\Ho^4_{sing}(X_3,\Q)\rightarrow \Ho^2_{sing}(\Fa(X_3),\Q)\] is an isomorphism (\cite{Bea2}), the following are equivalent:
\begin{itemize}
\item $j(g)^*=\Id$ on $\Ho^2_{sing}(\Fa(X_3),\Q)$.
\item $g^*=\Id$ on $\Ho^4_{sing}(X_3,\Q)$.
\end{itemize}
In particular, the map $j$ sends $\Ker(s)$ into $\Ker(t)$. We claim this map is an isomorphism. In fact, the injectivity is obvious by the fact that, for each point $x\in X_3$, there are at least two lines in $X_3$ passing through $x$.

We show that $j$ is surjective as follows. Since $\Fa(X_3)$ is simply connected, an automorphism $j(g)$ such that $j(g)^*=\Id$ on $\Ho^2_{sing}(\Fa(X_3))$ must preserve the natural ample bundle $\OO_{\Fa(X_3)}(1)$ of $\Fa(X_3)$ which is induced by the Pl\"ucker embedding.

We claim that an autorphism $f$ of $\Fa(X_3)$ preserving the natural polarization $\OO_{\Fa(X_3)}(1)$ is coming from a morphism \[g:X_3\rightarrow X_3,\] in other words, we have $j(g)=f$. In fact, we consider the natural inclusions \[\Fa(X_3)\subseteq \mathbb{G}(1,5)\subseteq \PP^N\]where the last inclusion is the Pl\"ucker embedding. By \cite[1.16(iii)]{Compo}, the Grassimanian $\mathbb{G}(1,5)$ is the intersection of the quadric
hypersurfaces in $\PP^N$ containing $\Fa(X_3)$ . So there is an automorphism $g': \mathbb{G}(1,5)\rightarrow \mathbb{G}(1,5)$ sending $\Fa(X_3)$ to $\Fa(X_3)$ and such that $g'|_{\Fa(X_3)}=f$. By a theorem of Chow \cite{Chow}, we have \[\Aut(\mathbb{G}(1,5))=\Aut(\PP^5).\]So there is a linear automorphism $g''$ of $\PP^5$ sending $X_3$ to $X_3$. The automorphism $g''|_{X_3}$ induces the automorphism \[f:\Fa(X_3)\rightarrow \Fa(X_3).\] It follows that $j(g''|_{X_3})=f.$ We show that $\Ker (s)\simeq \Ker(t)$.

Since $\Fa(X_3)$ is a holomorphic symplectic manifold (\cite{Bea2}), the lemma follows from \cite[Theorem 2.1]{BT}.

\end{proof}
\begin{prop} \label{injective}
Over complex numbers $\mathbb{C}$, the map
\[\A(X_3)\rightarrow \A(\Ho^4_{sing}(X_3,\Q))\] is injective.
\end{prop}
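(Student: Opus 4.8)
The plan is to deduce Proposition \ref{injective} from the work already assembled in this section together with the results of Section \ref{s3}. The key observation is that the previous lemma (Lemma \ref{fre}) reduces the injectivity of $s:\A(X_3)\to\A(\Ho^4_{sing}(X_3,\Q))$ on one cubic fourfold to a statement that propagates in families: $\Ker(s)$ is deformation-invariant. So it suffices to exhibit \emph{one} smooth cubic fourfold for which $s$ is injective, and then invoke irreducibility of the moduli of smooth cubic fourfolds (equivalently, connectedness of the family of smooth cubic hypersurfaces in $\PP^5$) to conclude that $\Ker(s)$ is trivial for every smooth cubic fourfold.

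The main steps, in order, would be: first, recall from Lemma \ref{fre} the isomorphism $\Ker(s)\simeq\Ker(t)$, where $t:\A(\Fa(X_3))\to\A(\Ho^2_{sing}(\Fa(X_3),\Q))$, and the fact that this kernel is locally constant in smooth families of cubic fourfolds (via the holomorphic symplectic structure on $\Fa(X_3)$ and \cite[Theorem 2.1]{BT}). Second, since the parameter space of smooth cubic fourfolds is irreducible, $\Ker(s)$ has constant cardinality over it, so it is enough to compute it at a single point. Third, choose a cubic fourfold with no automorphisms at all — a generic one — or alternatively a cubic fourfold whose automorphism group is well understood and acts faithfully on $\Ho^4_{sing}$ by a direct argument (for instance using that a generic smooth hypersurface of degree $\geq 3$ has trivial automorphism group by Matsumura--Monsky-type results, so $\Ker(s)$ is trivial there). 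Fourth, conclude that $\Ker(s)=\{1\}$ for all smooth cubic fourfolds $X_3$.

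The step I expect to be the main obstacle is producing the base case in a way that is self-contained: one needs a smooth cubic fourfold on which $s$ is demonstrably injective, and the cleanest route is to take a very general $X_3$, for which $\A(X_3)$ is trivial, making $\Ker(s)$ automatically trivial; but one should make sure the deformation-invariance of $\Ker(s)$ is being applied correctly (the family must be a \emph{smooth} deformation, which is fine since we stay among smooth cubics, and the parameter space is connected). An alternative that avoids even the triviality input is to use a cubic fourfold with large but explicit symmetry — e.g. a Fermat-type cubic — and check by hand, using the linearity of its automorphisms (Lemma \ref{autlin}) and an eigenvalue computation on $\Ho^4$, that no nontrivial automorphism acts trivially on cohomology; but the very general argument is shorter. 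In either case the heart of the matter is not a calculation but the structural fact, already granted by Lemma \ref{fre}, that it is enough to check a single fiber.

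\begin{proof}
By Lemma \ref{fre}, the kernel $\Ker(s)$ of $s:\A(X_3)\to\A(\Ho^4_{sing}(X_3,\Q))$ is invariant under smooth deformations of $X_3$. Since the parameter space of smooth cubic fourfolds in $\PP^5$ is irreducible, the cardinality of $\Ker(s)$ is the same for all smooth cubic fourfolds. Now a very general smooth cubic fourfold $X_3$ has $\A(X_3)=\{1\}$, hence $\Ker(s)=\{1\}$ for it, and therefore $\Ker(s)=\{1\}$ for every smooth cubic fourfold. Thus $s$ is injective.
\end{proof}
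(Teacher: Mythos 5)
Your proof is correct and follows essentially the same route as the paper: Lemma \ref{fre} gives deformation-invariance of $\Ker(s)$, and combining this with the connectedness of the parameter space of smooth cubics and the triviality of $\Aut(X_3)$ for a general $X_3$ (Matsumura--Monsky) yields the result. The paper's own proof is a one-line version of exactly this argument.
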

\begin{proof}
The proposition follows from the previous lemma and the fact that the automorphism group of a general cubic fourfold is trivial, see \cite{MM}. For a different proof of the proposition, we refer to \cite[Proposition 6.1]{PAN}.
\end{proof}

\begin{remark}
One can also use \cite[Theorem 3.2 and Corollary 3.3]{PANL} to give a simple proof.
\end{remark}

\begin{theorem} \label{mainthmmap}
Suppose that $X_3$ is a smooth cubic fourfold over an algebraically closed field $k$. Let $l$ be a prime different from the characteristic of $k$. We have the following injections:
\begin{itemize}
\item $\Aut(X_3)\hookrightarrow \Aut(\et^4(X_3,\Ql))$
\item $\Aut(\Fa(X_3))\hookrightarrow \Aut(\et^2(\Fa(X_3),\Ql)).$
\end{itemize}
\end{theorem}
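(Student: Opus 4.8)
The plan is to reduce the positive-characteristic case to characteristic zero, where both injections are already at hand: the first is Proposition \ref{injective}, and the second is contained in the proof of Lemma \ref{fre}, which produces an isomorphism $j\colon\Aut(X_3)\to\Aut(\Fa(X_3))$ onto the polarization-preserving automorphisms and shows that the Abel--Jacobi isomorphism $q_*p^*\colon\et^4(X_3)\xrightarrow{\sim}\et^2(\Fa(X_3))$ intertwines $g^*$ with $j(g)^*$. So assume $\chari(k)=p>0$ and fix a lift of $X_3$ to a smooth cubic fourfold $X$ over $W=W(k)$; by \cite[Exp.~IX, Thm.~1.5]{SGA7} the Hodge--de Rham spectral sequence of $X/W$ degenerates at $E_1$ with locally free terms, so Assumption \ref{assumption} holds for $X/W$.

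For the first bullet take $f_0\in\Aut(X_3)$ acting trivially on $\et^4(X_3,\Ql)$. By Lemma \ref{autlin} the order of $f_0$ is finite, so Lemma \ref{idladic} gives $f_0^*=\Id$ on $\cris^4(X_3/W)_K$; since $\cris^4(X_3/W)$ is torsion-free (Lemma \ref{cristorsionfree}) this forces $f_0^*=\Id$ on $\cris^4(X_3/W)\cong\Ho^4_{\DR}(X/W)$, and in particular the Hodge filtration is preserved. Infinitesimal Torelli holds for $X_3$: via the Jacobian-ring description of the differential of the period map, the cup product $\Ho^1(X_3,T_{X_3})\to\mathrm{Hom}(\Ho^2(X_3,\Omega^2),\Ho^3(X_3,\Omega^1))$ is the map induced by the perfect Macaulay pairing on the Artinian Gorenstein ring $k[x_0,\dots,x_5]/(\partial F)$, hence injective. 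Thus Theorem \ref{corolifting} lifts $f_0$ to an automorphism $f\colon X\to X$ over $W$. Passing to the geometric generic fiber, $f_{\overline K}$ is an automorphism of the smooth cubic fourfold $X_{\overline K}$ over $\overline K=\overline{\mathrm{Frac}(W)}$, and smooth and proper base change makes the cospecialization $\et^4(X_3,\Ql)\cong\et^4(X_{\overline K},\Ql)$ equivariant, so $f_{\overline K}^*=\Id$. Since $\overline K$ has characteristic zero, Proposition \ref{injective} together with the Lefschetz principle gives $f_{\overline K}=\Id$, hence $f_K=\Id$, hence $f=\Id_X$ (as $X$ is integral with dense generic fiber), hence $f_0=\Id$.

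For the second bullet I reduce to the first. Let $\phi\in\Aut(\Fa(X_3))$ act trivially on $\et^2(\Fa(X_3),\Ql)$. By Proposition \ref{nsgroup}, $\NS(\Fa(X_3))$ and $\et^2(\Fa(X_3),\Z_l(1))$ are torsion-free, so the cycle class embeds $\NS(\Fa(X_3))$ compatibly into $\et^2(\Fa(X_3),\Ql(1))$; as also $\Pic^0(\Fa(X_3))=0$, the automorphism $\phi$ preserves the Pl\"ucker polarization $\OO_{\Fa(X_3)}(1)$. Rerunning the projective-geometry part of the proof of Lemma \ref{fre} — which uses only that $\Fa(X_3)$ is linearly normal in its Pl\"ucker embedding, that $\mathbb{G}(1,5)$ is cut out by the quadrics through $\Fa(X_3)$ (\cite[1.16(iii)]{Compo}), Chow's theorem $\Aut(\mathbb{G}(1,5))=\Aut(\PP^5)$, and that every point of a smooth cubic fourfold lies on a line it contains, none of which depends on the characteristic — produces $g\in\Aut(X_3)$ with $j(g)=\phi$. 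The Abel--Jacobi map $q_*p^*\colon\et^4(X_3,\Ql)\to\et^2(\Fa(X_3),\Ql)$ is an isomorphism (over $\C$ this is Beauville--Donagi \cite{Bea2}; in general, spread $X_3$, $\Fa(X_3)$ and the incidence correspondence out over $W$ and apply smooth proper base change), and it carries $g^*$ to $j(g)^*=\phi^*=\Id$; hence $g^*=\Id$ on $\et^4(X_3,\Ql)$, so $g=\Id$ by the first bullet and $\phi=j(g)=\Id$.

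The step I expect to be the main obstacle is feeding the correct hypotheses into Theorem \ref{corolifting} in positive characteristic: establishing the infinitesimal Torelli theorem for $X_3$ over $k$ (classical over $\C$; in positive characteristic one must control the Jacobian ring of $F$, with a little extra care when $p=3$, or else apply the lifting criterion instead to the holomorphic-symplectic fourfold $\Fa(X_3)$, where infinitesimal Torelli is automatic and one uses $\et^4=\Sym^2\et^2$), and checking that $\cris^4(f_0)$ respects the Hodge filtration — the latter handled cleanly by Lemma \ref{idladic} and the torsion-freeness of crystalline cohomology. A secondary technical point is the characteristic-free Abel--Jacobi/Beauville--Donagi isomorphism, which follows by spreading out over $W$.
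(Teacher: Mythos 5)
Your proof follows essentially the same route as the paper's: reduce to characteristic zero via Lemma \ref{idladic}, Lemma \ref{cristorsionfree}, and the lifting criterion of Theorem \ref{corolifting} (the paper cites \cite[Theorem 3.4]{Tate} for infinitesimal Torelli where you sketch the Jacobian-ring argument directly), then conclude from Proposition \ref{injective}; and for the second bullet, pass through $\Pic(\Fa(X_3))$ torsion-free (Proposition \ref{nsgroup}), the projective-geometry part of Lemma \ref{fre}, and the Abel--Jacobi isomorphism to reduce to the first bullet. You fill in a few points the paper leaves implicit (Lefschetz principle to get from $\C$ to $\overline{K}$, spreading out Beauville--Donagi over $W$), but the skeleton is identical.
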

\begin{proof}
By Proposition \ref{injective} and the proof of Lemma \ref{fre}, the theorem holds in characteristic zero. Assume the characteristic of $k$ is positive. Let $f_0\in \A(X_3)$ be an automorphism of $X_3$ such that $f_0^*=\Id$ on $\et^4(X_3,\Ql)$. It follows from Lemma \ref{idladic} that $f_0^*=\Id$ on $\cris^4(X_3/W)_K$ where $K$ is the fraction field of the Witt ring $W=W(k)$. Therefore, by Lemma \ref{cristorsionfree} , we have $f_0^*=\Id$ on $\cris^4(X_3/W)$. Let $\widehat{X_3}$ be a lifting of $X_3$ over $W$. By the comparision theorem of crystalline cohomology and de Rham cohomology, we have
\[f_0^*=\Id:\Ho^4_{\DR}(\widehat{X_3}/W)\rightarrow \Ho^4_{\DR}(\widehat{X_3}/W).\]
In particular, the action $f_0^*$ preserves the Hodge filtractions of $$\Ho^4_{\DR}(\widehat{X_3}/W)=\cris^4(X_3/W).$$ By Theorem \ref{corolifting} and the infinitesimal Torelli Theorem \cite[Theorem 3.4]{Tate} for cubic fourfolds, one can lift the map $f_0$ to a map $\hat{f}$ on $\widehat{X_3}/W$. By Proposition \ref{injective}, we conclude that \[(\hat{f})_K:(\widehat{X_3})_K\rightarrow (\widehat{X_3})_K\]is the identity map. Therefore, its specialization $f_0$ is the identity map where $K$ is the fractiion field of $W(k)$. We prove the first statement. 

For the second statement, by Proposition \ref{nsgroup}, we know the Picard group \[\Pic(\Fa(X_3))=\NS(\Fa(X_3))\] of $\Fa(X_3)$ is torsion-free, hence, it is a subgroup of $\et^2(\Fa(X_3),\Ql)$ where we omit the tate twist. Hence, for an automorphism $f$ such that $f^*=\Id$ on $\et^2(\Fa(X_3),\Ql)$, we have $f^*=\Id$ on the Picard group $\Pic(\Fa(X_3))$. Let $L$ be the very ample line bundle of $\Fa(X_3)$ induced by Pl\"ucker embedding. We have $f^*(L)=L$. By the proof of Lemma \ref{fre}, we know $f$ is induced by an automorphism $g$ of the cubic fourfold $X_3$. To prove $f$ is the identity map, it suffices to show that $g$ is the identity map. 

In fact, it follows from $\et^2(\Fa(X_3),\Ql)\simeq \et^4(X_3,\Ql)$ \cite{Bea2} that $g^*=\Id$ on  $\et^4(X_3,\Ql)$. Hence, we have $g=\Id$ by the first statement. We prove the theorem.
\end{proof}

\section{Lefschetz Cubics}
Let $k$ be an algebraically closed field. We assume that $\chari(k)\neq 2,3$. We call a cubic hypersurface is a Lefschetz cubic if it has exactly one ordinary double point.
\begin{lemm}\label{picone}
Let $X_d$ be a hypersurface in $\PP^{n}$ over $k$. If the dimension of $X_d$ is at least $3$, then we have\[\Aut_L(X_d)=\A(X_d).\]
\end{lemm}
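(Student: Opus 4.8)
The plan is to show that the given embedding $X_d\hookrightarrow\PP^n$ is, up to projective equivalence, determined by the pair $(X_d,\mathcal{O}_{X_d}(1))$, and then that every automorphism of $X_d$ fixes the class of $\mathcal{O}_{X_d}(1)$. Write $H=\mathcal{O}_{X_d}(1)$ and note that $\dim X_d\geq 3$ forces $n\geq 4$; we may assume $d\geq 2$, the case $d=1$ (where $X_d\cong\PP^{n-1}$) being immediate. From the exact sequence $0\to\mathcal{O}_{\PP^n}(1-d)\to\mathcal{O}_{\PP^n}(1)\to\mathcal{O}_{X_d}(1)\to 0$ and the vanishing $\Ho^0(\PP^n,\mathcal{O}(1-d))=\Ho^1(\PP^n,\mathcal{O}(1-d))=0$ (valid since $d\geq 2$, $n\geq 2$), restriction is an isomorphism $\Ho^0(\PP^n,\mathcal{O}_{\PP^n}(1))\xrightarrow{\ \sim\ }\Ho^0(X_d,H)$; hence $X_d\subset\PP^n$ is exactly the embedding by the complete linear system $|H|$. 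Consequently, if $g\in\Aut(X_d)$ satisfies $g^{*}H\cong H$, a choice of such an isomorphism induces a linear automorphism of $\Ho^0(X_d,H)=\Ho^0(\PP^n,\mathcal{O}(1))$, hence an element of $\mathrm{PGL}_{n+1}$ preserving $X_d$ and restricting to $g$; that is, $g\in\Aut_L(X_d)$. Since $\Aut_L(X_d)\subseteq\Aut(X_d)$ is clear, the lemma reduces to the claim: $g^{*}H\cong H$ for every $g\in\Aut(X_d)$.

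To prove the claim I would establish that $\mathrm{Cl}(X_d)=\Z\cdot H$; since $H$ is a line bundle, this forces $\Pic(X_d)=\Z\cdot H$ as well, and then, as $g^{*}$ is a group automorphism of $\Pic(X_d)\cong\Z$ carrying the ample class $H$ to the ample class $g^{*}H$, it must be the identity, so $g^{*}H\cong H$. When $X_d$ is smooth this is the classical Lefschetz theorem on Picard groups (Grothendieck, SGA 2): restriction gives $\Pic(\PP^n)\xrightarrow{\ \sim\ }\Pic(X_d)$, hence $\Pic(X_d)=\Z\cdot\mathcal{O}(1)$. When $X_d$ carries one ordinary double point $p$, it is integral and normal with an isolated singularity, so $\mathrm{Cl}(X_d)=\mathrm{Cl}(X_d\setminus\{p\})=\Pic(\Bl_{p}X_d)/\Z[Q]$, where $\sigma:\Bl_{p}X_d\to X_d$ is the blow-up of the node and $Q$ its exceptional divisor (a smooth quadric). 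One then computes $\Pic(\Bl_{p}X_d)$ directly: for a cubic, projection from $p$ realizes $\Bl_{p}X_d$ as the blow-up of $\PP^{\dim X_d}$ along the complete intersection of the quadratic and cubic parts of the Taylor expansion of the equation at $p$, so its Picard group is free of rank two, $[Q]$ and $\sigma^{*}H$ are explicit primitive combinations of the two generators, and the quotient is $\mathrm{Cl}(X_d)=\Z$ with $H$ mapping to a generator. Either way $\Pic(X_d)=\Z\cdot H$, which finishes the argument.

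The step I expect to be the main obstacle is precisely this class-group computation in the nodal case. One cannot shortcut it by local factoriality, since a three-dimensional ordinary double point is not factorial; nor by a general Lefschetz statement for $\mathrm{Cl}$, since nodal hypersurfaces with many nodes can have large divisor class group. The specific fact needed — that a single ordinary double point contributes nothing new to $\Pic(X_d)$ — has to be extracted from the geometry of the resolution, by passing to the smooth model $\Bl_{p}X_d$ and tracking how $\sigma^{*}H$ and the exceptional quadric $Q$ sit inside its Picard lattice (in dimension $\geq 4$ the node is even parafactorial, so $\Pic(X_d)=\mathrm{Cl}(X_d)$ and this is slightly easier). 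By contrast the remaining ingredients — the cohomology vanishing identifying $\Ho^0(X_d,H)$ with $\Ho^0(\PP^n,\mathcal{O}(1))$, and the passage from $g^{*}H\cong H$ to the linearity of $g$ — are routine and purely formal.
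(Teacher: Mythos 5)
Your overall structure matches the paper's: reduce to showing every automorphism fixes the class of $\OO_{X_d}(1)$, for which it suffices to know $\Pic(X_d)=\Z\cdot\OO_{X_d}(1)$, and then use $\Ho^0(X_d,\OO_{X_d}(1))=\Ho^0(\PP^n,\OO_{\PP^n}(1))$ to extend the automorphism linearly. But you have misdiagnosed where the difficulty lies, and this leads you into a detour that is both unnecessary and less general than the statement you are trying to prove.

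The Grothendieck--Lefschetz theorem on Picard groups that you invoke in the smooth case does \emph{not} require smoothness of $X_d$. The result cited in the paper, namely Hartshorne, \emph{Ample Subvarieties of Algebraic Varieties}, Chapter~IV, Corollary~3.2 (a consequence of SGA~2, Exp.~XII), states that for \emph{any} complete intersection $Y\subset\PP^n_k$ of dimension at least $3$, the restriction $\Pic(\PP^n)\to\Pic(Y)$ is an isomorphism. The crucial point is that this is a statement about $\Pic$ (Cartier divisors), not $\mathrm{Cl}$ (Weil divisors); the Lefschetz conditions and parafactoriality it relies on are conditions on $\PP^n$ near $Y$, not smoothness of $Y$ itself. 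So for a hypersurface of dimension $\geq 3$, whether smooth, nodal, or more singular, $\Pic(X_d)=\Z\cdot\OO_{X_d}(1)$ holds directly, and the paper's proof is a one-liner. Your plan to instead establish $\mathrm{Cl}(X_d)=\Z\cdot H$ and deduce $\Pic(X_d)=\Z\cdot H$ from $\Pic\subseteq\mathrm{Cl}$ is logically valid but targets a strictly stronger statement, one that (as you yourself observe) can \emph{fail} for hypersurfaces with many nodes, even though the conclusion about $\Pic$ remains true. Moreover, the blow-up computation you sketch is specific to cubics with a single ordinary double point, whereas the lemma as stated makes no such restriction on $d$ or on the singularities. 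So what you perceive as ``precisely the main obstacle'' is not an obstacle at all once one applies the right form of the Lefschetz theorem, and your workaround actually narrows the scope of what you prove.
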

\begin{proof}
By \cite[Chapter IV.3, Corollary 3.2]{Hart}, we know \[\Pic(X_d)=\mathbb{Z}[\OO_{X_d}(1)].\] It is clear that $\Ho^0(X_d,\OO_{X_d}(1))=\Ho^0(\PP^n,\OO_{\PP^n}(1))$. Therefore, we conclude \[\Aut_L(X_d)=\A(X_d).\]
\end{proof}

\begin{thm} \label{node}
Let $X_3$ be a Lefschetz cubic threefold (resp. fourfold) over $k$. Then, the natural map
\[\A (X_3)\rightarrow \A(\et^3(X_3,\Ql))\](resp. $\A (X_3)\rightarrow \A(\et^4(X_3,\Ql))$ ) is injective.
\end{thm}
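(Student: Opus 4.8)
The plan is to fix the unique node, blow it up to reach a smooth rational variety birational to $\PP^{n}$ ($n=3$ or $4$), and then reduce the faithfulness statement to the corresponding statement for a curve of genus four (in the threefold case) or a K3 surface (in the fourfold case).

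First I would choose coordinates so that the node is $p=[0:\cdots:0:1]$ and write $X_3=V(x_{n+1}q+c)$ with $q=q(x_0,\dots,x_n)$ a nondegenerate quadratic form (here $\chari(k)\neq 2$) and $c=c(x_0,\dots,x_n)$ a cubic. As in Lemma \ref{picone} and Lemma \ref{autlin}, using that $\Pic(X_3)=\Z\cdot\OO_{X_3}(1)$ still holds for a nodal cubic of dimension $\ge 3$, every $g\in\A(X_3)$ is linear; since $p$ is the only singular point, $g$ fixes $p$, and projecting away from $p$ it induces $\bar g\in\Aut(\PP^{n})$ preserving the quadric $V(q)$ (the projectivized tangent cone) and the cubic $V(c)$, hence the complete intersection $Z:=V(q)\cap V(c)\subseteq\PP^{n}$, which is the Fano scheme of lines of $X_3$ through $p$. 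A first lemma to prove is that, because $X_3$ has no singular point besides $p$, the scheme $Z$ is a smooth complete intersection: a singular point of $Z$ would produce, via the Jacobian criterion together with the Euler relation (this is where $\chari(k)\neq 3$ enters), a singular point of $X_3$ distinct from $p$. One also checks $Z$ is connected and non-degenerate in $\PP^{n}$. Thus for $n=3$, $Z=C$ is a smooth curve of genus $4$, and for $n=4$, $Z=S$ is a smooth simply connected surface with trivial canonical bundle, i.e. a K3 surface.

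Now suppose $g^{*}=\Id$ on $\et^{n}(X_3,\Ql)$, and let $\widetilde X=\Bl_pX_3$, to which $g$ lifts as an automorphism $\widetilde g$ preserving the exceptional quadric $E$. From the Mayer--Vietoris sequence of the blow-up square $(\widetilde X,E,X_3,p)$ one obtains, for $n=3$, a $\widetilde g$-equivariant surjection $\et^{3}(X_3)\twoheadrightarrow\et^{3}(\widetilde X)$ (since $\et^{3}(E)=0$), and for $n=4$ a $\widetilde g$-equivariant exact sequence $0\to\et^{4}(X_3)\to\et^{4}(\widetilde X)\to\et^{4}(E)$ in which $\et^{4}(E)\cong\Ql$ carries the trivial $\widetilde g$-action (an automorphism of a smooth quadric threefold fixes the class of a line); in either case $\widetilde g^{*}=\Id$ on $\et^{n}(\widetilde X)$. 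Since $\widetilde X$ is birational to $\PP^{n}$ via the projection from $p$, the cylinder correspondence attached to the universal line through $p$ over $Z$ yields a $\bar g$-equivariant split injection $\et^{n-2}(Z)(-1)\hookrightarrow\et^{n}(\widetilde X)$ — an isomorphism onto $\et^{3}(\widetilde X)$ for $n=3$ and a direct summand of $\et^{4}(\widetilde X)$ for $n=4$, most transparently by identifying $\widetilde X$ with the blow-up of $\PP^{n}$ along $Z$ and invoking the blow-up formula. Hence $\bar g^{*}=\Id$ on $\et^{n-2}(Z,\Ql)$, i.e. on $\et^{1}(C,\Ql)$ when $n=3$ and on $\et^{2}(S,\Ql)$ when $n=4$.

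Finally I would invoke the known faithfulness of the automorphism action on cohomology for each piece: the Torelli property for curves (\cite{DM}) gives $\bar g|_{C}=\Id_{C}$ when $n=3$, since $C$ has genus $4\ge 2$; and the corresponding statement for K3 surfaces (cf. the discussion in the introduction, and \cite{Ogus}, \cite[Chapter 15]{K3}) gives $\bar g|_{S}=\Id_{S}$ when $n=4$ — in positive characteristic one may instead lift $X_3$ together with its node to $W(k)$ and pass to the complex K3 by smooth and proper base change. Because $Z$ is non-degenerate, an automorphism of $\PP^{n}$ fixing it pointwise must be the identity, so $\bar g=\Id_{\PP^{n}}$; substituting this into the block form of the linear automorphism $g$ and matching the result against $F=x_{n+1}q+c$ (using that $X_3$ is integral, so $q\nmid c$) forces $g=\Id_{X_3}$. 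I expect the main obstacle to be the middle step: pinning down cleanly, in arbitrary characteristic, the precise relation between $\et^{n}(\widetilde X)$ and $\et^{n-2}(Z)$ — that the cylinder map is a split injection — which amounts to the classical description of the blow-up of a one-nodal cubic $n$-fold at its node as (essentially) the blow-up of $\PP^{n}$ along the curve $C$ (resp. along the K3 surface $S$).
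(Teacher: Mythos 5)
Your proposal is correct and follows essentially the same route as the paper: blow up the node, identify $\widetilde{X}$ with the blow-up of $\PP^n$ along the (genus-$4$ curve, resp.\ K3) Fano scheme $Z$ of lines through the node, transfer triviality of the cohomology action to $\et^{n-2}(Z,\Ql)$ via the blow-up formula, and then invoke faithfulness for curves (\cite{DM}) and K3 surfaces. The paper carries out exactly this plan, verifying the compatibility of the blow-up decomposition with the automorphism in Lemmas \ref{node1} and \ref{node2} and using Riemann--Roch to see $Z$ spans $\PP^n$.
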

\begin{proof}
We give a proof for Lefschetz cubic fourfolds and sketch a proof for Lefschetz cubic threefolds. First of all, we blow up the node $p$ of the cubic fourfold $X_3$. Denote by \[\Bl_p:\X \rightarrow X_3\] the blow-up. Suppose $g\in \A(X_3)$. It is clear that $g(p)=p$. Therefore, it induces a map $g'$ from $\X$ to $\X$. We have the following diagram
\begin{equation}\label{com}
\xymatrix{Q \ar[d]^{g'|_Q} \ar@{^{(}->}[r]^i& \X \ar@{.>}[d]^{g'} \ar[r]^{\Bl_p} & X_3\ar[d]^g\\
Q \ar@{^{(}->}[r]^i& \X \ar[r]^{\Bl_p} & X_3}
\end{equation}
where $Q$ is the exceptional divisor of the blow-up, it is a smooth quadric threefold \cite[Expos$\acute{e}$ XII]{SGA7}.
We apply the long exact sequence of l-adic cohomology with compact support to the pair $(X_3,X_3-p)$. Denote $X_3-p$ by $U$. We have an exact sequence as follows
\[\xymatrix{ (0=)\et^3(p) \ar[r] & \etc^4(U)\ar[r] & \et^4(X_3)\ar[r]& \etc^4(p)(=0) }.\]
We conclude that
\[\etc^4 (U) \simeq  \et^4(X_3).\]
On the other hand, we have an exact sequence as follows
\[(0=)\et^3(Q)\rightarrow \et^4(X_3) \rightarrow \et^4(\X)\oplus \et^4(p) \rightarrow \et^4(Q)\rightarrow \et^5(X_3),\]see~\cite[Page 230]{Milne}.
The exact sequence gives rise to an exact sequence
\begin{equation}\label{etstar}
0\rightarrow \et^4(X_3) \rightarrow \et^4(\X)\rightarrow \et^4(Q)(=\Ql),
\end{equation}
see~\cite[Expos$\acute{e}$ XII.3]{SGA7}.
Let $\N_{Q/\X }$ be the normal bundle of $Q$ in $\X$. By \cite[VII,Theorem 4.1]{SGA5}, we have maps $i_*$ and $i^*$ such that \[i^*i_*(-)=(-)\cup c_1(\N_{Q/\X})\]and
\begin{equation}\label{spl}
\xymatrix{\Ql=\et^2(Q) \ar@/^1pc/[rr]^{-\cup c_1(\N_{Q/\X})} \ar[r]_{i_*} &\et^4(\X)\ar[r]_{i^*} &\et^4(Q)=\Ql}
\end{equation}
where $\Ql=\et^2(Q)$ and $\et^4(Q)=\Ql$ follow from \cite[XII]{SGA7}. Let $\widetilde{\PP^5}$ be the blow-up of $\PP^5$ at $p$. Since $\X$ is the strictly transformation of $X_3$ in the blow-up $\widetilde{\PP^5}$, it is easy to see $\N_{Q/\X}=\OO_{Q}(-1)$. Therefore, the map $i^*i_*$ is an isomorphism which gives a spliting by (\ref{etstar}). In particular, we have
\[\xymatrix{\et^2(Q)\oplus \et^4(X_3) \ar[r]^<<<<<{(i_*,\Bl_p^*)}_<<<<{\simeq} &\et^4(\X)}.\]
If $g^*=\Id$ on $\et^4(X_3)$, then, by Lemma \ref{node1} below and the fact $\et^2(Q)=\Ql<c_1(\OO_Q(1))>$ (see \cite[XII]{SGA7}), we conclude that the map $g'^*=\Id$ on $\et^4(\X)$.

Denote by $S$ the space of lines in $X_3$ passing through $p$. Let \[L\rightarrow S\] be the tautological bundle of lines. We hope there is no confusion if we also denote by $L$ the union of lines in $X_3$ passing through $p$. In an affine coordinate system with the origin $p$, we can write down the equation for $X_3$ as follows:
\[0=f_2(x_1,\ldots,x_5)+f_3(x_1,\ldots,x_5)\]
where $f_2$ (resp. $f_3$) is a polynomial of degree $2$ (resp. $3$). Therefore, the surface $S$ is a complete intersection in $\PP^4$ of type $(2,3)$ defined by the equations $f_2$ and $f_3$. Since $X_3$ has exactly one node, the surface $S$ is a smooth $K3$ surface by local calculations. We project $X_3$ from $p$ to $\PP^4$. It gives rise to the following commutative diagram (see Lemma \ref{picone}):
\[\xymatrix{\X\ar[d]^{\simeq} \ar[r]^{\Bl_p}& X_3 \ar@{.>}[d]^{\proj} &X_3-L \ar[d]^{\simeq} \ar@{_{(}->}[l]\ar[r]^{g|_{X_3-L}} &X_3-L\ar[d]^{\simeq}\\
\Bl_S(\PP^4)\ar[r]^{\Bl_S} &\PP^4& \PP^4-S\ar@{_{(}->}[l] \ar[r]^{g_0} &\PP^4-S}\]
where $\proj$ is the projection from $p$ to a projective subspace $\PP^4$ and $g_0$ is induced by the vertical identification between $X_3-L$ and $\PP^4-S$. Since $S$ is of codimension $2$, the map $g_0$ extends to an automorphism $g_1$ of $\PP^4$ sending $S$ to $S$. It induces a map \[g_2:\Bl_S(\PP^4)\rightarrow \Bl_S(\PP^4).\] Under the identification $\Bl_S(\PP^4)=\X$, the map $g':\X \rightarrow \X$ coincides with the map $g_2$ over the open subset $\Bl_S(\PP^4)-E$ where $E$ is the exceptional divisor of the blow-up $\Bl_S(\PP^4)$ along $S$. It follows that $g_2=g'$. So we have the following diagram:
\[\xymatrix{E \ar[d]^{\pi} \ar@{^{(}->}[r]^{j} & \Bl_S(\PP^4)\ar[d]^{\Bl_S} \ar[r]^{g_2} &\Bl_S(\PP^4)\ar[d]^{\Bl_S} &E\ar@{_{(}->}[l]_<<<<j \ar[d]^{\pi}\\
S \ar@{^{(}->}[r]^{inc}& \PP^4 \ar[r]^{g_1} &\PP^4& S\ar@{_{(}->}[l]}\]
Let $\varphi$ be a map as follows:
\begin{equation}\label{star2}
\et^4(\PP^4)\oplus \et^2(S) \oplus \et^0(S) \rightarrow \et^4(\Bl_S(\PP^4))
\end{equation}
maps $(x,y,z)$ to $\Bl_S^*(x)+j_*\pi^*(y)+j_*((\pi^*z)\cup c_1(\OO_E(1)))$. It follows from \cite[Theorem 7.3.1]{Voisinhodge} and \cite[VII]{SGA5} that the map $\varphi$ is an isomorphism.

By Lemma \ref{node2} below, we have that
\begin{equation}\label{star1}
g_2^*\circ \varphi=\varphi \circ (g_1^*,g_1|_S^*, g_1|_S^*).
\end{equation}
From the discussion above, we know $g_2^*=\Id$ on $\et^4(\Bl_S(\PP^4))$ if the map $g^*=\Id$ on $\et^4(X_3)$. Therefore, we have $(g_1|_S)^*=\Id$ on $\et^2(S,\Ql)$. By \cite[Proposition 3.4.2]{Riz}, we conclude that $g_1|S$ is the identity since $\et^2(S,\mathbb{Z}_l)$ is torsion-free. Since $S$ is a smooth complete intersection surface of type $(2,3)$ in $\PP^4$, we have \[\Ho^0(S,\OO_S(1))=\frac{c_1(\OO_S(1))\cdot c_1(\OO_S(1))}{2}+\chi(\OO_S)=3+2=5\]by the Riemann-Roch theorem. Therefore, we know the inclusion $inc$ is given by the complete linear system $|\OO_S(1)|$
\[\xymatrix{S \ar@/^1pc/[rrr]^{g_1|_S} \ar[r]_{inc}& \PP^4\ar[r]^{g_1} &\PP^4& S\ar[l]^{inc}}.\] Therefore, we conclude $g_1=\Id$ and $g_2=\Id$. In summary, we prove the theorem holds for Lefschetz cubic fourfolds. For the cubic threefolds, the proof is similar. We sketch a proof here.
\begin{enumerate}
  \item The exact sequence (\ref{etstar}) is replaced by \[\et^3(X_3)\rightarrow \et^3(\X)\rightarrow 0.\]Then $g_2^*=\Id$ on $\et^3(\X)\cong \et^3(\Bl_S\PP^3)$.
  \item The space $S$ of lines is a smooth complete intersection curve of type $(2,3)$ in $\PP^3$. Instead of using \cite[Proposition 3.4.2]{Riz}, we use \cite[Theorem 1.13]{DM} to show that $g_1=\Id$ and $g_2=\Id$.
\end{enumerate}
(i) and (ii) imply the theorem holds for Lefschetz cubic threefolds.
\end{proof}

\begin{lemm}\label{node1}
Under the hypothesis as above, we have the following commutative diagram:
\[\xymatrix{\et^4(X_3)\oplus\et^2 (Q) \ar[d]_{(g^*,g'^*|_{Q})} \ar[r]^<<<<{(\Bl_p^*,i_*)}_<<<{\simeq} & \et^4(\X)\ar[d]^{g^*}\\
\et^4(X_3)\oplus \et^2(Q) \ar[r]^<<<<{(\Bl_p^*,i_*)}_<<<{\simeq} &\et^4(\X).}\]
\end{lemm}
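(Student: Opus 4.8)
The plan is to verify the square summand by summand, using the commutative diagram (\ref{com}) from the proof of Theorem \ref{node} together with the elementary functoriality of $\et^*(-,\Ql)$ and of the Gysin map $i_*$ (the latter as set up in \cite[VII, Theorem 4.1]{SGA5}). I read the unlabeled right vertical arrow as $g'^*$, the automorphism of $\et^4(\X,\Ql)$ induced by $g'$, so that the content of the lemma is the pair of identities $g'^*\circ\Bl_p^*=\Bl_p^*\circ g^*$ on $\et^4(X_3,\Ql)$ and $g'^*\circ i_*=i_*\circ(g'|_Q)^*$ on $\et^2(Q,\Ql)$.

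For the summand $\et^4(X_3,\Ql)$: the outer rectangle of (\ref{com}) is the relation $\Bl_p\circ g'=g\circ\Bl_p$ of morphisms $\X\to X_3$, and applying $\et^4(-,\Ql)$ contravariantly gives $g'^*\circ\Bl_p^*=\Bl_p^*\circ g^*$. For the summand $\et^2(Q,\Ql)$: the left square of (\ref{com}) is the relation $g'\circ i=i\circ(g'|_Q)$ of morphisms $Q\to\X$, and since $g'$ is an automorphism carrying the exceptional divisor $Q$ isomorphically onto $Q$, this square is Cartesian with $g'$ flat; hence the Gysin map is compatible with base change along $g'$, i.e. $g'^*\circ i_*=i_*\circ(g'|_Q)^*$. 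Alternatively, by covariant functoriality of proper pushforward one has $g'_*\circ i_*=i_*\circ(g'|_Q)_*$, and one substitutes $g'_*=(g'^*)^{-1}$ and $(g'|_Q)_*=((g'|_Q)^*)^{-1}$; or, since $\et^2(Q,\Ql)=\Ql\cdot c_1(\OO_Q(1))$ and $\Pic(Q)=\Z$ for the smooth quadric threefold $Q$, one simply observes that $(g'|_Q)^*$ fixes $c_1(\OO_Q(1))$ and tracks this single class through $i_*$.

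Putting the two together, for $(x,y)\in\et^4(X_3,\Ql)\oplus\et^2(Q,\Ql)$ one obtains
\[g'^*\bigl(\Bl_p^*(x)+i_*(y)\bigr)=\Bl_p^*\bigl(g^*(x)\bigr)+i_*\bigl((g'|_Q)^*(y)\bigr),\]
which is exactly the asserted commutativity, the horizontal isomorphism being $(\Bl_p^*,i_*)$ as already established in the proof of Theorem \ref{node}. Essentially everything here is formal functoriality; the only step that deserves a moment's care is the compatibility of $i_*$ with base change on the second summand, and even that can be sidestepped by the pushforward-functoriality computation or the explicit hyperplane-class bookkeeping indicated above.
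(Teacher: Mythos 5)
Your proof is correct and follows the same strategy as the paper: verify the two summands separately, getting $g'^*\circ\Bl_p^*=\Bl_p^*\circ g^*$ from the outer square of (\ref{com}), and $g'^*\circ i_*=i_*\circ(g'|_Q)^*$ by the substitution $g'_*=(g'^*)^{-1}$ into covariant pushforward functoriality, which is precisely the paper's argument (your base-change and one-dimensionality alternatives are fine but unnecessary). You are also right that the right vertical arrow labeled $g^*$ in the statement is a typo for $g'^*$.
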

\begin{proof}
By the diagram (\ref{com}), we have
\begin{itemize}
  \item $g'^*\circ \Bl_p^*=\Bl_p^*\circ g^*$ and
  \item $g'^*\circ i_*=i_*\circ g'|_Q^*$ since $g'^*$ is the inverse of $g'_*$. We prove the lemma.
\end{itemize}
\end{proof}
\begin{lemm}\label{node2}
With the same notations and hypotheses as above, we have \[g_2^*\circ \varphi=\varphi \circ (g_1^*,g_1|_S^*, g_1|_S^*),\]see (\ref{star1}) and (\ref{star2}).
\end{lemm}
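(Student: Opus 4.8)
Lemma \ref{node2} asserts compatibility of the blow-up decomposition isomorphism $\varphi$ with the actions of $g_2$ on $\Bl_S(\PP^4)$ and of $g_1$ on $\PP^4$ (together with $g_1|_S$ on $S$). Let me plan a proof.The plan is to check the claimed commutativity $g_2^*\circ\varphi=\varphi\circ(g_1^*,g_1|_S^*,g_1|_S^*)$ summand by summand, using only the functoriality of pullback, pushforward, and Chern classes under the morphisms in the commutative diagram
\[\xymatrix{E \ar[d]^{\pi} \ar@{^{(}->}[r]^{j} & \Bl_S(\PP^4)\ar[d]^{\Bl_S} \ar[r]^{g_2} &\Bl_S(\PP^4)\ar[d]^{\Bl_S}\\
S \ar@{^{(}->}[r]^{inc}& \PP^4 \ar[r]^{g_1} &\PP^4.}\]
The key inputs are: (a) $g_2\circ\Bl_S=\Bl_S\circ g_1$ on the base, which is exactly how $g_2$ was constructed in the proof of Theorem \ref{node}; (b) $g_2$ restricts to a map $g_2|_E:E\to E$ over $g_1|_S:S\to S$, i.e.\ $g_2\circ j=j\circ g_2|_E$ and $\pi\circ g_2|_E=g_1|_S\circ\pi$, which also follows from the diagram above; and (c) $g_2|_E^*(c_1(\OO_E(1)))=c_1(\OO_E(1))$, since $g_2$ preserves the exceptional divisor $E$ (so $g_2^*\OO_{\Bl_S(\PP^4)}(E)=\OO_{\Bl_S(\PP^4)}(E)$) and $\OO_E(1)$ is the restriction of $\OO(-E)$ to $E$, or alternatively because $\OO_E(1)$ is the relative hyperplane bundle of the projective bundle $\pi:E\to S$ and $g_2|_E$ is a $\PP$-bundle automorphism covering $g_1|_S$.

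Concretely, I would compute $g_2^*\varphi(x,y,z)=g_2^*\Bl_S^*(x)+g_2^*j_*\pi^*(y)+g_2^*j_*((\pi^*z)\cup c_1(\OO_E(1)))$. For the first term, $g_2^*\Bl_S^*=(\Bl_S\circ g_2)^*=(g_1\circ\Bl_S)^*=\Bl_S^*g_1^*$, so it equals $\Bl_S^*(g_1^*x)$, the first summand of $\varphi(g_1^*x,\dots)$. For the second and third terms I use the base-change/projection identity for the proper pushforward along the regular embedding $j$: since $g_2$ is an isomorphism, $g_2^*=(g_2^{-1})_*$, hence $g_2^*\circ j_*=j_*\circ (g_2|_E)^*$ (functoriality of pushforward along the commuting square $g_2\circ j=j\circ g_2|_E$, both $g_2$ and $g_2|_E$ isomorphisms). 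Then $(g_2|_E)^*\pi^*(y)=\pi^*(g_1|_S)^*(y)$ by $\pi\circ g_2|_E=g_1|_S\circ\pi$, giving the second summand $j_*\pi^*(g_1|_S^*y)$; and $(g_2|_E)^*((\pi^*z)\cup c_1(\OO_E(1)))=\pi^*(g_1|_S^*z)\cup g_2|_E^*c_1(\OO_E(1))=\pi^*(g_1|_S^*z)\cup c_1(\OO_E(1))$ by (c), giving the third summand. Summing, $g_2^*\varphi(x,y,z)=\varphi(g_1^*x,g_1|_S^*y,g_1|_S^*z)$, which is the assertion.

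The only genuinely non-formal point is (c), the invariance of $c_1(\OO_E(1))$ under $g_2|_E^*$; I expect this to be the main (though still minor) obstacle. The cleanest justification is that the exceptional divisor $E$ of $\Bl_S(\PP^4)$ is intrinsic to the blow-up, so any automorphism $g_2$ of $\Bl_S(\PP^4)$ covering an automorphism $g_1$ of $\PP^4$ that sends $S$ to $S$ must satisfy $g_2^*\OO_{\Bl_S(\PP^4)}(E)\cong\OO_{\Bl_S(\PP^4)}(E)$; restricting to $E$ and using $\OO_E(1)\cong\OO_{\Bl_S(\PP^4)}(-E)|_E$ (the standard identification of the relative $\OO(1)$ on the exceptional projective bundle) forces $g_2|_E^*c_1(\OO_E(1))=c_1(\OO_E(1))$ in $\et^2(E,\Ql)$. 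Everything else is bookkeeping with the functoriality properties of $i^*$, $i_*$ recorded in \cite[VII]{SGA5}, exactly as in the definition of $\varphi$ in (\ref{star2}); the proof of Lemma \ref{node1} is the same argument in the simpler case of the point blow-up, so one can also present Lemma \ref{node2} as its direct analogue.
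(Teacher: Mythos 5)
Your proof is correct and follows essentially the same route as the paper's: check $g_2^*\varphi=\varphi\circ(g_1^*,g_1|_S^*,g_1|_S^*)$ term by term, using $g_2^*\circ\Bl_S^*=\Bl_S^*\circ g_1^*$, then $g_2^*\circ j_*=j_*\circ(g_2|_E)^*$ (the paper phrases this via $g_{2*}=(g_2^*)^{-1}$ and the projection formula, which amounts to the same thing for isomorphisms), and finally $(g_2|_E)^*c_1(\OO_E(1))=c_1(\OO_E(1))$ via $\OO_E(-1)\cong N_{E/\Bl_S(\PP^4)}\cong\OO_{\Bl_S(\PP^4)}(E)|_E$, which is exactly the paper's normal-bundle argument in different words. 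One small caveat: your parenthetical alternative justification of step (c) --- that $\OO_E(1)$ is the relative hyperplane bundle and $g_2|_E$ is a $\PP$-bundle automorphism over $g_1|_S$ --- is not sufficient by itself, since an automorphism of a projective bundle covering the base only determines $\OO(1)$ up to a twist by a line bundle pulled back from $S$; the normal-bundle identification is what pins it down, so keep that as the primary argument.
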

\begin{proof}
To prove the lemma, it suffices to show that \[g_2^*(\Bl_S^*(x)+j_*\pi^*(y)+j_*((\pi^*z)\cup c_1(\OO_E(1))))\]is equal to \[\Bl_S^*g_1^*(x)+j_*(\pi^*(g_1|_S)^*(y))+j_*[(\pi^*(g_1|_S)^*(z))\cup c_1(\OO_E(1))].\]
We claim that
\begin{itemize}
  \item $g_2^*\circ \Bl_S^*(x)=\Bl_S^*\circ g_1^*(x)$,
  \item $g_2^*\circ j_*\circ \pi^*(y)=j_*\circ \pi^*\circ (g_1|_S)^*(y)$,
  \item $(g_2^*\circ j_*)(\pi^*z\cup c_1(\OO_E(1)))=j_*((\pi^*\circ (g_1|_S)^*(z))\cup c_1(\OO_E(1)))$.
\end{itemize}
The first equality is obvious. The second equality follows from the equality\[g_2^*\circ j_*(\pi^*y)=\left(j_*\circ (g_2|_E)^*\right)(\pi^*y)\] and $g_{2*} =(g^*_2)^{-1}$. To show the third equality, it suffices to show that \[(g_2^*\circ j_*)(\pi^*z\cup c_1(\OO_E(1)))=j_*([(g_2|_E)^*(\pi^*(z))]\cup c_1(\OO_E(1))).\]In fact, it follows that\[g_{2*}j_{*}((g_2|_E)^*(\pi^*(z))\cup c_1(\OO_E(1)))\]
is equal to
\[j_*((g_2|_E)_*([(g_2|_E)^*(\pi^*z)]\cup c_1(\OO_E(1))))=j_*(\pi^*(z)\cup [(g_2|_E)_*(c_1(\OO_E(1)))])\]
from the projection formula and the fact that $g_{2*} =(g^*_2)^{-1}$. Therefore, it suffices to show that
\[j_*(\pi^*z\cup c_1(\OO_E(1)))=j_*(\pi^*(z)\cup [(g_2|_E)_*(c_1(\OO_E(1)))]).\]
We claim that $(g_2|_E)_*(c_1(\OO_E(1)))=c_1(\OO_E(1))$. The lemma follows from the claim. To prove the claim, we just need to show
\[(g_2|_E)^*(\OO_E(1))=\OO_E(1).\]
In fact, we have $\OO_E(-1)=N_{E/\Bl_S(\PP^4)}$ where $N_{E/\Bl_S(\PP^4)}$ is the normal bundle of $E$ in $\Bl_S(\PP^4)$. So it gives \[(g_2|_E)^*(N_{E/\Bl_S(\PP^4)})=N_{E/\Bl_S(\PP^4)}.\]We prove the claim.

\end{proof}

\section{Vanishing Cycles and the Middle Picard Number}
In this section, we apply the theory of Milnor fibers and vanishing cycles to show that there is a smooth cubic fourfold of middle Picard number one with an automorphism of order $3$, see Proposition \ref{pic}. All varieties in this section are over complex numbers $\C$. We fix some notations as follows.

Suppose that $F_1(X_1,X_2,\ldots,X_5)$ and $F_2(X_1,X_2,\ldots,X_5)$ are cubic forms defining smooth cubic threefolds. Let $V_1$ (resp. $V_2$) be the smooth cubic threefold in $\PP^4$ defined by $F_1=0$ (resp. $F_2=0$). By the theory of Lefschetz pencils, we can assume that $F_1$ and $F_2$ are two forms such that the intersection $V_1\cap V_2$ is smooth and every fiber of the family $X$ of cubic threefolds defined by \[F_1(X_1,\ldots, X_5)+tF_2(X_1,\ldots,X_5)=0, t\in \C\]has at most one node.

 Using this family $X$ of cubic threefolds, we can construct a family $X_{\ao}$ of cubic fourfolds \[f:X_{\ao}\rightarrow \ao\] as follows: 
 
 $X_{\ao}$ is given by
 \begin{equation}\label{cubeq}
\{(t,[X_0:,\ldots,:X_5])\in \ao\times \PP^5| X_0^3+F_1(X_1,\ldots,X_5)+tF_2(X_1,\ldots,X_5)=0\}.
\end{equation}
 In other words, the fiber of $X_{\ao}\rightarrow \ao$ over $b\in \C$ is a cubic fourfold which is a 3-cyclic branched covering over $\PP^4$ along the cubic threefold $(X)_b$ which is the cubic threefold in $X$ over $b$.
Hence, every fiber of $X_{\ao}\rightarrow \ao$has at most one singularity and the singularity must be of $A_2$-type. Such an $A_2$-singularity is locally defined by the equation
\[X_0^3+X_1^2+X_2^2+\ldots+X_5^2=0.\]
Denote by $x_1,\ldots,x_m \in X_{\ao}$ the $A_2$-singularities in $X_{\ao}$. The Milnor fiber $\Mil_{f,x_i}$ corresponding to $x_i$ is homotopic to $ S^4\vee S^4$ since its Milnor number is two.

The family $X_{\ao}\rightarrow \ao$ can be completed to a pencil of cubic fourfolds over $\PP^1$ naturally. Denote the pencil by \[f:X_{\PP^1}\rightarrow \PP^1.\] In particular, it is the pencil of cubic fourfolds defined by cubic fourfolds $X_0^3+F_1=0$ and $F_2=0$. Over the point $\infty\in \PP^1$, the fiber $X_{\infty}$ is the cubic fourfold defined by $F_2(X_1,\ldots, X_5)$. It is clear that the cubic fourfold $X_{\infty}$ is a cone. 
An easy calculation shows that the base locus $B$ of this pencil is smooth since $V_1\cap V_2$ is smooth. Moreover, we have $X_{\PP^1}=\Bl_B(\PP^5)$. 

Let $U=\ao-\{f(x_1),\ldots,f(x_m)\}$ and $X_b=f^{-1}(b)$ ($b\in U$). It is clear that $X_{\PP^1}-X_{\infty}=X_{\ao}$. We have the following lemma to describe the vanishing cycles.

\begin{lemm}\label{vc}\label{generate}
Let $Y$ be a smooth fiber of $X_{\ao}\rightarrow \ao$ over some point $b\in U$. Denote by $j'$ (resp. $j$) the inclusion $Y \subseteq \PP^5$ (resp. $Y=f^{-1}(b) \subseteq X_{\PP^1}=\Bl_B(X)$). Then, 
\begin{enumerate}
\item $\Ho^4_{prim}(Y,\C)=\Ho^4_{van}(Y,\C):=\Ker(j'_*:\Ho^4(Y,\C)\rightarrow \Ho^6(\PP^5,\C))$.
\item By $\Poincar$\text{\'e} duality, the space $\Ho^4 _{van}(Y,\C)$ of vanishing cycles is generated by the image of $\Ho_4(\Mil_{f,x_i},\mathbb{C})$ in \[\Ho_4(X_{t_i},\C)\cong \Ho_4(Y,\C),\] where $t_i$ is a regular value of $f$ nearby $f(x_i)$ and we identify the homology and cohomology groups of $X_{t_i}$ with $Y's$.
\end{enumerate}
\end{lemm}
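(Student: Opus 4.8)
The plan is to prove the two assertions separately; the first is a formal consequence of Lefschetz theory, the second is the geometric heart of the statement. For part (1), write $h=c_1(\mathcal{O}_Y(1))$. By the Lefschetz hyperplane theorem the restriction $\Ho^{\bullet}(\PP^5)\to\Ho^{\bullet}(Y)$ is an isomorphism in degrees $\le 3$ and injective in degree $4$, so that $\Ho^4(Y)=\C h^2\oplus\Ho^4_{prim}(Y)$ with $\Ho^4_{prim}(Y)=\Ker(\cup h\colon\Ho^4(Y)\to\Ho^6(Y))$, the Lefschetz complement of $\C h^2$. The self-intersection formula gives $j'^{*}j'_{*}\alpha=\alpha\cup c_1(N_{Y/\PP^5})=3\,\alpha\cup h$ for every $\alpha\in\Ho^4(Y)$, and $j'^{*}\colon\Ho^6(\PP^5)\to\Ho^6(Y)$ is injective because both groups are one–dimensional and $h^3\ne 0$ in $\Ho^6(Y)$ by hard Lefschetz on the fourfold $Y$. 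Hence $\Ker\big(j'_{*}\colon\Ho^4(Y)\to\Ho^6(\PP^5)\big)=\Ker(\cup h)=\Ho^4_{prim}(Y)$, which is the claim.

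For part (2) I would first pass to homology: Poincaré duality on $Y$ and on $\PP^5$ identifies the Gysin map $j'_{*}\colon\Ho^4(Y,\C)\to\Ho^6(\PP^5,\C)$ with the pushforward $j'_{*}\colon\Ho_4(Y,\C)\to\Ho_4(\PP^5,\C)$, so $\Ho^4_{van}(Y)$ corresponds to $\Ho_4(Y)_{van}:=\Ker\big(j'_{*}\colon\Ho_4(Y)\to\Ho_4(\PP^5)\big)$, and the task becomes showing this subspace is spanned by the images of $\Ho_4(\Mil_{f,x_i},\C)\to\Ho_4(X_{t_i},\C)\cong\Ho_4(Y,\C)$. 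Restrict the family to the affine line, $f\colon X_{\ao}\to\ao$, with critical points $x_1,\dots,x_m$, and note that the second projection $\pi\colon X_{\ao}\to\PP^5$ restricts on $Y=f^{-1}(b)$ to the inclusion $j'$, so that $j'_{*}=\pi_{*}\circ\iota_{*}$ with $\iota\colon Y\hookrightarrow X_{\ao}$. Then I would run the Lefschetz–thimble analysis of the pair $(X_{\ao},Y)$: retract $\ao$ onto a contractible star $\Sigma$ joining $b$ to the critical values, lift this (by Ehresmann's theorem, away from the $x_i$) to a deformation retraction of $X_{\ao}$ onto $f^{-1}(\Sigma)$, which is $Y$ with thimbles attached, one batch per $x_i$; excision together with the local Milnor fibration (near $x_i$ the map $f$ is analytically $X_0^3+X_1^2+\dots+X_4^2$, with Milnor fibre $\simeq S^4\vee S^4$) yields $\Ho_k(X_{\ao},Y)\cong\bigoplus_{i=1}^m\widetilde{\Ho}_{k-1}(\Mil_{f,x_i})$, concentrated in degree $k=5$ with total dimension $\sum_i\mu_i=2m$. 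The long exact sequence of the pair,
\[
\Ho_5(X_{\ao})\longrightarrow\bigoplus_{i=1}^m\widetilde{\Ho}_4(\Mil_{f,x_i})\xrightarrow{\ \partial\ }\Ho_4(Y)\xrightarrow{\ \iota_{*}\ }\Ho_4(X_{\ao})\longrightarrow 0,
\]
then shows $\iota_{*}$ is surjective and $\Ker(\iota_{*})=\mathrm{Im}(\partial)$, and since the boundary of a thimble is the corresponding vanishing cycle, $\mathrm{Im}(\partial)$ is precisely the span of the images of the $\Ho_4(\Mil_{f,x_i})$ in $\Ho_4(Y)$.

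Finally I must match $\Ker(\iota_{*})$ with $\Ho_4(Y)_{van}$. One inclusion is automatic: $j'_{*}=\pi_{*}\iota_{*}$ forces $\Ker(\iota_{*})\subseteq\Ker(j'_{*})=\Ho_4(Y)_{van}$ (geometrically, a Milnor-fibre class is supported in a ball of $\PP^5$, hence null-homologous there). For the reverse inclusion, since $\iota_{*}$ is onto $\Ho_4(X_{\ao})$ it suffices that $\pi_{*}\colon\Ho_4(X_{\ao})\to\Ho_4(\PP^5)=\C$ be injective; as $\pi_{*}$ is nonzero — it carries the homology class of a linear section $Y\cap\PP^3$ to a nonzero class of $\PP^5$ — this reduces to the estimate $\dim\Ho_4(X_{\ao})\le 1$. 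This is the step I expect to be the main obstacle: I would obtain it by analyzing $\pi\colon X_{\ao}\to\PP^5$ — an isomorphism over $\PP^5\setminus\{F_2=0\}$ and with $\ao$-fibres over the base locus $B$ — via Mayer–Vietoris, using the cohomology of the affine complement $\PP^5\setminus\{F_2=0\}$ and of the complete-intersection threefold $B$; equivalently, writing $X_{\ao}=\Bl_B\PP^5\setminus X_\infty$, by combining the Thom isomorphism with the known cohomology of $\Bl_B\PP^5$ and of the cone $X_\infty$ over the smooth cubic threefold $V_2$. The only other delicate point — routine in Lefschetz-pencil theory but worth spelling out since the singular fibres here are of type $A_2$ rather than nodal — is the thimble construction itself, namely that $f^{-1}(\Sigma)$ retracts as claimed and that excision identifies $\Ho_{\bullet}(X_{\ao},Y)$ with the reduced homology of the local Milnor fibres.
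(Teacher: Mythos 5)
Your proposal follows essentially the same skeleton as the paper. For part (1), you work out the Lefschetz/Gysin argument directly, whereas the paper cites Voisin, but the content is the same. For part (2), you set up exactly the paper's apparatus: restrict to the affine line, compute $\Ho_k(X_{\ao},Y)\cong\bigoplus_i\widetilde{\Ho}_{k-1}(\Mil_{f,x_i})$ via a star-shaped retraction plus excision (this is the paper's Lemma~\ref{lemm5}), feed it into the long exact sequence of the pair to get $\Ker(j_*)=\mathrm{span}(\mathrm{Im}\,\Ho_4(\Mil_{f,x_i}))$ and $j_*$ surjective, and then try to upgrade $\Ker(j_*)\subseteq\Ker(j'_*)$ to an equality.

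However, you have correctly isolated a genuine gap, and you should not have left it open. The step you flag as ``the main obstacle'' --- injectivity of $\pi_*=(\Bl|_{X_{\ao}})_*\colon\Ho_4(X_{\ao})\to\Ho_4(\PP^5)\cong\C$, equivalently $\dim\Ho_4(X_{\ao})\le 1$ --- is exactly what is needed and you never establish it, only gesture at Mayer--Vietoris. (The paper writes ``$h$ is an isomorphism by the snake lemma'' once $j_*$ is surjective, but that is not what the snake lemma gives: with both rows short exact it yields $\Ker\big((\Bl)_*\big)\cong\mathrm{Coker}(h)$, so one still has to show $(\Bl)_*$ is injective on $\Ho_4(X_{\ao})$.) Trying to bound $\Ho_4(X_{\ao})$ directly is also dangerously close to circular, since $\Ho_4(X_{\ao})=\Ho_4(Y)/\Ker(j_*)$ and you do not yet know how big $\Ker(j_*)$ is.

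The clean way to close the gap is not a Betti-number computation of $X_{\ao}$ but the monodromy argument from Lefschetz-pencil theory, which still works for $A_2$ degenerations. Since the variation near $x_i$ lands in $\mathrm{Im}\big(\Ho_4(\Mil_{f,x_i})\to\Ho_4(Y)\big)$ and is dual to restriction to the Milnor fibre, a class $\alpha\in\Ho^4(Y)$ is fixed by all local monodromies $r_i$ if and only if $\alpha$ is orthogonal to every local vanishing cycle, i.e.\ iff $\alpha\in\mathrm{span}^{\perp}$. As $\pi_1(U)$ is generated by the loops around the $f(x_i)$, this says $\mathrm{span}^{\perp}=\Ho^4(Y)^{\pi_1(U)}$. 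By Deligne's global invariant cycle theorem applied to the smooth compactification $X_{\PP^1}=\Bl_B(\PP^5)$, the invariants equal $\mathrm{Im}\big(\Ho^4(\Bl_B(\PP^5))\to\Ho^4(Y)\big)$; since $\Ho^4(\Bl_B(\PP^5))=\C h^2\oplus\C(h\cdot e)$ and $e|_Y=[B]=3h|_Y$ (because $B$ is a cubic section of $Y$), this image is the line $\C\,h^2|_Y$. Hence $\mathrm{span}^{\perp}=\C h^2|_Y$, so $\mathrm{span}=(h^2|_Y)^{\perp}=\Ho^4_{prim}(Y)$, which is the required equality $\Ker(j_*)=\Ker(j'_*)$. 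Without some such input, neither your writeup nor the paper's snake-lemma sentence actually proves the lemma.
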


\begin{proof}
The first statement follows from \cite[Prop 2.27]{Voisinhodge}.  For the second statement, we claim that
\[\Ker(j'_*:\Ho_4(Y,\C)\rightarrow \Ho_4(\PP^5,\C))=\Ker(j_*:\Ho_4(Y,\C)\rightarrow \Ho_4(X_{\PP^1}-X_{\infty},\C))\]where the left hand side is $\Ho_{van}^4(Y,\C)$ by $\Poincar$\'e duality. In fact, since $X_{\PP^1}=\Bl_B(\PP^5)$ and $j'=\Bl|_{X_{\ao}}\circ j$ where $\Bl$ is the blow-up morphism \[\Bl:X_{\PP^1}=\Bl_B(\PP^5)\rightarrow \PP^5.\] So we have a diagram as follows
\[\xymatrix{0 \ar[r] & \Ker (j_*)\ar[r] \ar[d]^{h} &\Ho_4(Y)\ar@{=}[d] \ar[r]^{j_*} &\Ho_4(X_{\ao})\ar[d]_{(Bl|_{X_{\ao}})_*}\\
0\ar[r]& \Ker(j'_*)\ar[r] &\Ho_4(Y) \ar[r]^{j'_*} &\Ho_4(\PP^5)}\]
where $X_{\ao}=X_{\PP^1}-X_{\infty}$. If $j_*$ is surjective, then $h$ is an isomorphism by the snake lemma. To show the surjectivity of $j_*$, we consider the long exact sequence of homology groups associated to $(X_{\ao},Y)$
\[\xymatrix{\Ho_5(X_{\ao},Y) \ar[r] &\Ho_{4}(Y) \ar[r]^{j_*} & \Ho_4(X_{\ao})\ar[r] &\Ho_4(X_{\ao},Y)}.\] By Lemma \ref{lemm5} below, we have $\Ho_{4}(X_{\ao}, Y)=0.$ Therefore, we show that $j_*$ is surjective. The lemma follows.
\end{proof}

\begin{lemm}\label{lemm5}
With the notations as above, we have
\[\Ho_j(X_{\ao},Y)\simeq \oplus_{i=1}^m \Ho_j(X_{\Delta_i},Y)\simeq \oplus_{i=1}^m \widetilde{H}_{j-1}(\Mil_{f,x_i})\]
where $\Delta_i$ is a small disk containing $f(x_i)$ and we identify $Y$ with any smooth fiber of $f$ so that the relative homology groups make sense. Moreover, we have $\Ho_{4}(X_{\ao}, Y)=0$ and $\Ho_{5}(X_{\ao}, Y)=\oplus_{i=1}^m \Ho_{4}(\Mil_{f,x_i})$.
\end{lemm}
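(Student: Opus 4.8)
The plan is to peel off the contribution of one singular fibre at a time, reducing the global relative homology $\Ho_j(X_{\ao},Y)$ to a direct sum of purely local relative homology groups, and then to evaluate the local terms using Milnor's theory of the fibration near an isolated singularity. Throughout, the coefficient ring plays no role, so I will suppress it.

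First I would carry out the global-to-local reduction. Since $f\colon X_{\PP^1}\to\PP^1$ is projective, its restriction $f\colon X_{\ao}\to\ao$ is proper; it is moreover smooth over $U=\ao\setminus\{f(x_1),\dots,f(x_m)\}$, so by Ehresmann's theorem it is a locally trivial $C^\infty$ fibre bundle over $U$. Choose pairwise disjoint closed discs $\Delta_i\ni f(x_i)$, small enough that $f$ has no critical point over $\Delta_i$ besides $x_i$, together with a system of pairwise disjoint arcs $\gamma_i$ in $\ao$ running from the base point $b$ to a point $t_i\in\partial\Delta_i$ and meeting $\bigcup_j\Delta_j$ only at $t_i$. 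With $\Gamma=\bigcup_i(\Delta_i\cup\gamma_i)$, the plane $\ao$ deformation retracts onto $\Gamma$ through a homotopy that is the identity on each $\Delta_i$ and lives over $\ao\setminus\bigcup_j\mathrm{int}(\Delta_j)$, where $f$ is a submersion; parallel transport lifts this to a deformation retraction of $X_{\ao}$ onto $X_\Gamma=f^{-1}(\Gamma)$ fixing $Y$. Over each contractible arc $\gamma_i$ the bundle is trivial, so $f^{-1}(\gamma_i)$ deformation retracts onto $Y$; applying excision (or Mayer--Vietoris) to the decomposition $X_\Gamma=\bigl(\bigsqcup_i X_{\Delta_i}\bigr)\cup\bigl(\bigsqcup_i f^{-1}(\gamma_i)\bigr)$ then gives
\[\Ho_j(X_{\ao},Y)\cong\Ho_j(X_\Gamma,Y)\cong\bigoplus_{i=1}^m\Ho_j(X_{\Delta_i},X_{t_i}),\]
and the trivialisation along $\gamma_i$ identifies $X_{t_i}$ with $Y$, which is precisely the convention in the statement; this yields the first isomorphism of the lemma.

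Next I would compute each local term $\Ho_j(X_{\Delta_i},X_{t_i})$. Fix $i$; shrinking $\Delta_i$ if necessary, $f$ is a submersion on $X_{\Delta_i}\setminus\{x_i\}$ since $x_i$ is the only singular point of the central fibre $X_{f(x_i)}$. Pick a closed Milnor ball $B$ about $x_i$: then $B$ is contractible (a cone on its link) and $B\cap X_{t_i}$ is by definition the Milnor fibre $\Mil_{f,x_i}$. Over the complement of $B$ the map $f$ is a proper submersion over the contractible disc $\Delta_i$, hence a trivial bundle, so $X_{\Delta_i}\setminus\mathrm{int}(B)$ deformation retracts onto $X_{t_i}\setminus\mathrm{int}(B\cap X_{t_i})$ rel boundary; excising this set gives
\[\Ho_j(X_{\Delta_i},X_{t_i})\cong\Ho_j(B,\,B\cap X_{t_i})=\Ho_j(B,\Mil_{f,x_i})\cong\widetilde{H}_{j-1}(\Mil_{f,x_i}),\]
the last step being the long exact sequence of the pair together with the contractibility of $B$. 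This is the second isomorphism. Finally, the $x_i$ are $A_2$-singularities, locally $X_0^3+X_1^2+\dots+X_5^2=0$, with Milnor number $2$, so (as already recalled) $\Mil_{f,x_i}\simeq S^4\vee S^4$; hence $\widetilde{H}_{j-1}(\Mil_{f,x_i})=0$ unless $j=5$, where it equals $\Ho_4(\Mil_{f,x_i})\cong\Z^{2}$. Summing over $i$ gives $\Ho_4(X_{\ao},Y)=0$ and $\Ho_5(X_{\ao},Y)\cong\bigoplus_{i=1}^m\Ho_4(\Mil_{f,x_i})$.

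The step I expect to be the main obstacle is the first one: turning the picture of ``collapsing the exterior of the critical discs'' into honest lifted deformation retractions and legitimate excisions requires the careful use of Ehresmann's theorem (or Thom's first isotopy lemma) to know that $f$ is a trivial fibration over every contractible region avoiding the critical values and, locally, over the complement of a Milnor ball. Once those fibration-triviality inputs are in hand, everything else is a routine diagram chase through long exact sequences of pairs and triples.
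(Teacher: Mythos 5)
Your proof is correct and follows essentially the same route as the paper: reduce $\Ho_j(X_{\ao},Y)$ to a direct sum of local relative groups near the singular fibres, then excise to a Milnor ball and use $\Mil_{f,x_i}\simeq S^4\vee S^4$. The only cosmetic difference is in the global-to-local step, where you deformation-retract $\ao$ onto an explicit spider $\Gamma$ of discs joined by arcs and lift by Ehresmann, while the paper instead takes a good cover of $\PP^1$ and runs a Mayer--Vietoris induction; both yield the same decomposition $\Ho_j(X_{\ao},Y)\simeq\oplus_i\Ho_j(X_{\Delta_i},Y)$, and the local excision $\Ho_*(X_{\Delta_i},Y)\simeq\widetilde{H}_{*-1}(\Mil_{f,x_i})$ is handled identically (the paper simply cites \cite[Proof of Lemma C.13]{MHS} where you spell it out).
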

\begin{proof}
In fact, we can choose a good cover $\{U_1,\ldots,U_{s+1}\}$ of $\PP^1$ such that
\begin{itemize}
\item the finite intersections of $\{U_i\}$ are contractible,
\item the intersection $\cap_{i\in I} U_i$ does not contain any point of $\{x_1,\ldots,x_m,\infty\}$ if $\#(I)$ ($I\subseteq \{1,2,\ldots,s+1\}$) is greater than one,
\item $\PP^1-(U_1\cup\ldots\cup U_s)$ contains $\infty$ and it is contractible to $\infty$,
\item $U_i$ contains $x_i$ and $U_j$ does not contain any $x_i$ if $j\geq m+1$.
\end{itemize}
It follows from a deformation retraction that the inclusion $(X_{U_1\cup\ldots\cup U_s},Y)\subseteq (X_{\ao},Y)$ is a homotopy equivalence. 
It follows that \[\Ho_j(X_{\ao},Y)\simeq \Ho_j(X_{U_1\cup\ldots\cup U_s},Y).\] By the Mayer Vietoris sequence and the induction on the number of the coverings $\{U_i\}$, we have $ \Ho_j(X_{U_1\cup\ldots\cup U_s},Y)= \oplus_{i=1}^m \Ho_j(X_{U_i},Y)$. Since we can shrink $U_i$ to a small disk $\Delta_i$, we show that\[\Ho_j(X_{\ao},Y)\simeq \oplus_{i=1}^m \Ho_j(X_{\Delta_i},Y).\]
By the excision theorem (see \cite[Proof of Lemma C.13]{MHS}), we have \[\Ho_*(X_{\Delta_i},Y)\simeq \Ho_*(B,\Mil_{f,x_i})\simeq \widetilde{H}_{*-1}(\Mil_{f,x_i})\]where $B$ is a small ball that is centered at $x_i$. We prove the lemma.
\end{proof}

In the following, we denote $\Ho^0(\PP^4,\OO_{\PP^4}(3))$ by $V$. Suppose that $V$ has a basis \[\{F_1,F_2,\ldots,F_{N+1}\}.\] Using this basis, we identify $\PP(V)$ with $\PP^N.$ Let $[t_1:\ldots :t_{N+1}]$ be the coordinates of $\PP^N$. So we have a natural inclusion
\begin{equation} \label{nz}
\ao\hookrightarrow \mathbb{A}^{N-1}=\{t_1\neq 0\}
\end{equation}
which maps $t\in \ao$ to $[F_1+tF_2]$.

By the theory of Lefschetz pencils, we know there is an irreducible discriminant hypersurface $H'$ in $\PP(V)$ parametrizing singular cubic threefolds, see \cite[Chapter 2(2.2), Chapter 3 3.2.2]{Voisinhodge}. Denote the universal smooth hypersurface by\[\varphi: \mathcal{X}\rightarrow \PP(V)-H'.\] Let $H'_o$ be the open subset of $H'$ parametrizing cubic threefolds with at most one ordinary double point. The subvariety $H'_o$ is smooth, see \cite[Chapter 2, Lemma 2.7 and Proposition 2.9]{Voisinhodge}. Let $H$ be $H'\cap \mathbb{A}^{N-1}$ and $\Ho_o$ be $H'_o\cap \mathbb{A}^{N-1}$ where $\mathbb{A}^{N-1}=\{t_1\ne 0\}$, see (\ref{nz}). In particular, we have $\Ho_o\cap \ao=\{x_1,\ldots,x_m\}$. Assume $w=\mathrm{exp}(\frac{2\pi i}{3})$.

\begin{lemm}\label{conjugate}
Let $r_i$ be the local monodromy operator of $\pi_1(\Delta_i^*)$ on the homology $\Ho_4(\Mil_{f,x_i})$. We have the following decomposition
\[\Ho_4(\Mil_{f,x_i},\C)=\Ho_{w}^i\oplus \Ho_{w^2}^i\]
where $\Ho_{w}^i$ (resp. $\Ho_{w^2}^i$) is the $w$(resp. $w^2$)-eigenspace of $r_i$ and of dimension one. Denote by $\delta_w^i$ (resp. $\delta_{w^2}^i$) the corresponding eigenvector. Under the monodromy action of $\pi_1(U)$, all the image of $\delta_w^i$ (resp. $\delta_{w^2}^i$) in $\Ho^4_{prim}(X_b,\C)=\Ho^4_{van}(X_b,\C)$ are conjugate up to a nonzero scalar where $b$ is a point in $U$.
\end{lemm}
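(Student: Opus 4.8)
The plan is to combine a Sebastiani--Thom computation of the local monodromy with the irreducibility of the discriminant of cubic threefolds, keeping track throughout of the fibrewise order-$3$ symmetry. First I would establish the local eigenspace decomposition: in suitable local analytic coordinates $f$ is, near $x_i$, the Sebastiani--Thom sum of $X_0^3$ (one variable) and a nondegenerate quadratic form $q$ in the remaining four variables, this being the normal form of the $A_2$-singularity of a fourfold. By the Sebastiani--Thom theorem $\widetilde{\Ho}_4(\Mil_{f,x_i},\C)\cong\widetilde{\Ho}_0(F_{X_0^3},\C)\otimes\widetilde{\Ho}_3(F_q,\C)$ is two-dimensional, and $r_i$ is the tensor product of the monodromy of $X_0^3$ -- the cyclic permutation of three points, with eigenvalues $w,w^2$ on reduced $\Ho_0$ -- with the monodromy of $q$, which is multiplication by $(-1)^4=1$ on $\widetilde{\Ho}_3(S^3)$. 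Hence $r_i$ has the simple eigenvalues $w,w^2$, which gives the decomposition $\widetilde{\Ho}_4(\Mil_{f,x_i},\C)=\Ho_w^i\oplus\Ho_{w^2}^i$ and the eigenvectors $\delta_w^i,\delta_{w^2}^i$ of the statement.

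Next I would make the labelling global and transport everything to a fixed fibre. The family $X_{\ao}\to\ao$ carries the fibrewise order-$3$ automorphism $\sigma\colon X_0\mapsto wX_0$, whose restriction to $X_b$ for $b\in U$ is the covering involution of the triple cover $X_b\to\PP^4$; being defined over the whole base, $\sigma$ acts on the vanishing local system over $U$ commuting with the $\pi_1(U,b)$-monodromy, and since $\Ho^4(X_b,\C)^{\sigma}=\Ho^4(\PP^4,\C)$ is one-dimensional this splits $\Ho^4_{van}(X_b,\C)=V_w\oplus V_{w^2}$ into $\pi_1(U,b)$-stable $\sigma$-eigenspaces interchanged by complex conjugation. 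On $\Mil_{f,x_i}$ the automorphism $\sigma$ is exactly the cyclic permutation in the $X_0^3$-factor, so it commutes with $r_i$, preserves the two $r_i$-eigenlines (the eigenvalues of $r_i$ being distinct), and acts on them with eigenvalues $w$ and $w^2$; I would use this to fix coherently in $i$ which line is $\Ho_w^i$. As the specialization map $\widetilde{\Ho}_4(\Mil_{f,x_i},\C)\to\Ho_4(X_b,\C)$ of Lemma~\ref{generate} is $\sigma$-equivariant, the image of $\delta_w^i$ -- which, via the Poincar\'e duality identification used there, I regard as an element of $V_w\subset\Ho^4_{van}(X_b,\C)$ -- lies in the $V_w$-summand, and it therefore suffices to treat $V_w$ (the $V_{w^2}$ case following by complex conjugation). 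I also note that this image is the $w$-eigenline of the local monodromy $(\gamma_i)_*$ on the rank-two span $E_i$ of the vanishing cycles at $f(x_i)$, where $\gamma_i$ is a small loop around $f(x_i)$, because for an $A_2$ the local monodromy acts on $E_i$ as a rotation of order $3$ with the same eigenlines.

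The crux is the transitivity assertion. The family $X_{\ao}\to\ao$ is the pullback, along the inclusion $\ao\hookrightarrow\mathbb{A}^{N-1}\subset\PP^N=\PP(V)$ of (\ref{nz}), of the family of triple covers $\{X_0^3+F=0\}$ over $\PP^N$, which is smooth over $\PP^N\setminus H'$, acquires a single $A_2$ over each point of $H'_o$, and meets $\ao$ at the $A_2$-points precisely along $\ao\cap\Ho_o=\{f(x_1),\dots,f(x_m)\}$. Hence the monodromy $\pi_1(U,b)\to\mathrm{GL}(\Ho^4_{van}(X_b,\C))$ factors through $\pi_1(\PP^N\setminus H',b)$, and by the Lefschetz-pencil hypotheses on $F_1,F_2$ the line through $[F_1]$ and $[F_2]$ is transverse to $H'$ and meets it only inside $H'_o$, so the Zariski--Lefschetz theorem on fundamental groups of plane sections gives that $\pi_1(U,b)\to\pi_1(\PP^N\setminus H',b)$ is surjective. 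Since $H'$ is irreducible and $H'_o$ is connected, any two meridians of $H'$ based at points of $H'_o$ are conjugate in $\pi_1(\PP^N\setminus H')$; lifting a conjugating element to $g\in\pi_1(U,b)$, the operator $g_*$ on $\Ho_4(X_b,\C)$ satisfies $g_*(\gamma_i)_*g_*^{-1}=(\gamma_j)_*$, hence maps $E_i$ onto $E_j$ and $\Ho_w^i$ onto $\Ho_w^j$, i.e.\ $g_*\delta_w^i=c\,\delta_w^j$ for some nonzero scalar $c$. Thus all the $\delta_w^i$, and by complex conjugation all the $\delta_{w^2}^i$, are conjugate up to a nonzero scalar under $\pi_1(U)$.

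The routine inputs are the Sebastiani--Thom formula and the classical irreducibility of $H'$. The two points I expect to require real care are: that $\sigma$ genuinely acts on the vanishing local system and pins down the $w$ versus $w^2$ labelling coherently over all $m$ branches of the degeneration (for which one must match the Milnor monodromy of the $A_2$ with the local monodromy on $\Ho_4(X_b)$ via the fact that the latter is an order-$3$ rotation of the rank-two vanishing lattice); and the surjectivity $\pi_1(U)\twoheadrightarrow\pi_1(\PP^N\setminus H')$, which forces one to check that the standing hypotheses on $F_1,F_2$ really make $\{F_1+tF_2\}$ a Lefschetz pencil of cubic threefolds transverse to $H'$ so that the Zariski--Lefschetz theorem applies. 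I expect the second of these to be the main obstacle, the rest being bookkeeping with the $\sigma$-action and Lemma~\ref{generate}.
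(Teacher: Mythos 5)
Your argument is correct and follows the same underlying strategy as the paper, but with two points treated more carefully. For the local eigenspace decomposition the paper simply cites the Brieskorn--Pham theorem together with $\Mil_{f,x_i}\simeq S^4\vee S^4$; your Sebastiani--Thom factorization into $X_0^3$ in one variable times a nondegenerate quadric in four variables (giving monodromy $(-1)^4=1$ on $\widetilde{\Ho}_3(S^3)$) is the same computation spelled out. Where you go beyond the paper is in the transitivity step. The paper's proof takes a path $l$ in the connected locus $\Ho_o$ from $f(x_i)$ to $f(x_j)$, lifts it to $l'$ in the boundary of a tubular neighbourhood of $\Ho_o$ inside $\mathbb{A}^{N-1}$, and trivialises the universal family over $l'$ to match the two rank-two vanishing subspaces; this \emph{a priori} produces a conjugating element of $\pi_1(\mathbb{A}^{N-1}\setminus H)$, not of $\pi_1(U)$ as the lemma states, and the paper leaves the passage to $\pi_1(U)$ implicit. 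You close that gap explicitly by observing that the monodromy factors through $\pi_1(\PP^N\setminus H')$ and that the Lefschetz-pencil hypotheses on $F_1,F_2$ make $\pi_1(U)\to\pi_1(\PP^N\setminus H')$ surjective via Zariski--Lefschetz, so the conjugating loop can be replaced by one in $U$. Your use of the fibrewise $\Z/3$-action $\sigma$ to label the $w$- versus $w^2$-eigenlines coherently across all $m$ degenerations is also a genuine refinement: the paper invokes the commuting $\Z/3$-action only after the lemma, in the proof of Proposition~\ref{irr}, and does not use it to pin down the labelling inside Lemma~\ref{conjugate} itself. Both routes are sound; yours is the more self-contained.
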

\begin{proof}
Since the singularity $x_i$ is of $A_2$-type, i.e., it is locally as
\[X_0^3+X_1^2+\ldots+X_5^2=0,\]the first statement follows from the Brieskorn-Pham theorem and the fact that $\Mil_{f,x_i}\simeq S^4\vee S^4$, see \cite[Theorem 9.1]{milnor}.
For the second statement, we notice that $H=H'\cap \mathbb{A}^{N-1}$ is irreducible, so its smooth open subset $\Ho_o$ is connected. For points $x_i$ and $x_j$ in $\Ho_o$, we can choose a path $l$ from $x_i$ to $x_j$ in $\Ho_o$. We lift $l$ to a path $l'$ from $x_i'$ to $x_j'$ in the boundary of a tubular neighbourhood of $\Ho_o$ in $\mathbb{A}^{N-1}$, where $x_i'$ (resp. $x_j'$) is a point very closed to $x_i$ (resp. $x_j$). Obviously, a trivialisation of $\varphi$ over $l'$ transports the vanishing cycle classes generated by the image of $\Ho_4(\Mil_{f,x_i})$ to the vanishing cycle classes generated by the image of $\Ho_4(\Mil_{f,x_j})$, see Lemma \ref{generate}. Under this transportation, the monodromy operators $r_i$ and $r_j$ are conjugate by the path $l'$. So we identify $\Ho_{w}^i$  (resp. $\Ho_{w^2}^i$) with $\Ho_{w}^j$ (resp. $\Ho_{w^2}^j$) by the monodromy action. We prove the lemma.
\end{proof}

Consider $f|_{X_U}:X_U\rightarrow U$, see (\ref{cubeq}). There is a natural $\Z/3\Z$ action on $X_U$ fiberwisely. More precisely, the action of the generator of $\Z/3\Z$ is given by
\[(t,[X_0:X_1:\ldots:X_5])\rightarrow (t,[w X_0:X_1:\ldots:X_5])\]
where $w$ is $\mathrm{exp}(\frac{2\pi i}{3})$. This action induces an action on the local system $(R^4 f_*(\C))_0$ (over $U$) whose stalk over $b\in U$ is $\Ho^4_{prim}(X_b,\C)$. So we have
\[\Ho^4_{prim}(X_b,\C)=\Ho^4_w\oplus \Ho^4_{w^2}\]
where  $\Ho^4_w$ (resp. $\Ho^4_{w^2}$) is the $w$ (resp. $w^2$)-eigenspace of the $\Z/3\Z$-action. The eigenspace $\Ho^4_w$ (resp. $\Ho^4_{w^2}$) contains $\Ho^{3,1}$ (.resp $\Ho^{1,3}$) by the Griffiths residues, see (\ref{residue}). Since the $\Z/3\Z$-action commutes with the monodromy action of $\pi_1(U)$, the summands $\Ho_w^4$ and $\Ho_{w^2}^4$ are $\pi_1(U)$-modules. In particular, the summand $\Ho_w^4$ (resp. $\Ho_{w^2}^4$) is generated by the vanishing cycles $\{\delta_w^i\}$ (resp. $\{\delta_{w^2}^i\}$) for $i=1,2,\ldots,m$, see Lemma \ref{generate} and \cite[Theorem 9.1]{milnor}.

We use the Picard-Lefschetz transformation formula to show the following proposition.
\begin{prop}\label{irr}
With the notations as above, the monodromy representation $\pi_1(U)$ of $\Ho^4_{prim}(X_b,\C)$ is complete reducible. The irreducible decomposition is \[\Ho^4_{prim}(X_b,\C)=\Ho^4_w\oplus \Ho^4_{w^2}.\] 
\end{prop}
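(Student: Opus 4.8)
The plan is to derive the statement from the Picard--Lefschetz description of the local monodromies $r_i$, together with Lemmas~\ref{generate} and~\ref{conjugate} and the non-degeneracy of the cup-product form on primitive cohomology. Write $\sigma$ for the generator of the $\Z/3\Z$-action on $X_U$ over $U$. Since $\sigma$ is defined over $U$, its action on $\Ho^4(X_b,\C)$ commutes with the monodromy representation of $\pi_1(U)$; hence $\Ho^4_{prim}(X_b,\C)=\Ho^4_w\oplus\Ho^4_{w^2}$ is a decomposition of $\pi_1(U)$-modules, and it suffices to prove that each summand is irreducible (complete reducibility of $\Ho^4_{prim}(X_b,\C)$ then follows automatically; alternatively one may invoke Deligne's theorem that a local system which is a direct summand of one underlying a polarizable variation of Hodge structure --- here the primitive part of $R^4f_*\C$ over $U$ --- is semisimple). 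By the symmetry $w\leftrightarrow w^2$ it is enough to treat $\Ho^4_w$.

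First I would record the $A_2$ Picard--Lefschetz formula in a form adapted to the eigenspace splitting. For each $i$, let $P_i\subseteq\Ho^4(X_b,\C)\cong\Ho_4(X_b,\C)$ be the image of $\Ho_4(\Mil_{f,x_i},\C)$; by Lemma~\ref{generate} it is the span of $\delta_w^i$ and $\delta_{w^2}^i$, and since the $A_2$ Milnor lattice has discriminant $\pm 3$ this plane is non-degenerate for the cup-product form $\langle\,,\,\rangle$, so $\Ho^4_{prim}(X_b,\C)=P_i\perp P_i^{\perp}$. Standard local Lefschetz theory for an isolated $A_2$ degeneration (Milnor, SGA~7) shows that $r_i$ is the identity on $P_i^{\perp}$ and acts on $P_i$ with matrix $\mathrm{diag}(w,w^2)$ in the basis $(\delta_w^i,\delta_{w^2}^i)$. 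Because $r_i$ commutes with $\sigma$, the planes $P_i$ and $P_i^{\perp}$ are $\sigma$-stable, $\delta_w^i\in\Ho^4_w$ and $\delta_{w^2}^i\in\Ho^4_{w^2}$, and $\Ho^4_w$ is isotropic for $\langle\,,\,\rangle$ (the form is $\sigma$-invariant and $\sigma$ acts by $w$ there), so $\langle\,,\,\rangle$ induces a perfect pairing $\Ho^4_w\times\Ho^4_{w^2}\to\C$ and $\langle\delta_w^i,\delta_{w^2}^i\rangle\neq0$. Combining these, for $x\in\Ho^4_w$ one obtains
\[(r_i-\Id)(x)=c_i\,\langle x,\delta_{w^2}^i\rangle\,\delta_w^i,\qquad c_i\in\C^{\times}.\]

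With this formula the irreducibility argument is the classical Lefschetz one, now run one eigensummand at a time. Let $V\subseteq\Ho^4_w$ be a nonzero $\pi_1(U)$-submodule. For every $x\in V$ and every $i$ we have $(r_i-\Id)(x)=c_i\langle x,\delta_{w^2}^i\rangle\delta_w^i\in V$, so for each $i$ either $\delta_w^i\in V$ or $V$ is orthogonal to $\delta_{w^2}^i$. If $\delta_w^i\in V$ for some $i$, then by Lemma~\ref{conjugate} (all the $\delta_w^j$ are $\pi_1(U)$-translates of scalar multiples of $\delta_w^i$) we get $\delta_w^j\in V$ for all $j$, and since the $\{\delta_w^j\}$ span $\Ho^4_w$ we conclude $V=\Ho^4_w$. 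Otherwise $V$ is orthogonal to every $\delta_{w^2}^j$, hence to all of $\Ho^4_{w^2}$; being contained in the isotropic subspace $\Ho^4_w$, it is then orthogonal to all of $\Ho^4_{prim}(X_b,\C)$, so $V$ lies in the radical of the cup-product form, which is $0$ --- contradicting $V\neq0$. Hence $\Ho^4_w$, and symmetrically $\Ho^4_{w^2}$, is $\pi_1(U)$-irreducible, and the proposition follows.

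The main obstacle is the first step, i.e. establishing the eigenspace-adapted Picard--Lefschetz formula. One needs (a) that the specialization map $\Ho_4(\Mil_{f,x_i},\C)\to\Ho_4(X_b,\C)$ is injective and compatible with the intersection form, so that $P_i$ is a faithful copy of the $A_2$ Milnor lattice and $r_i$ acts on $\Ho^4(X_b,\C)$ as stated --- this is the usual local theory for an isolated $A_2$ degeneration, and the injectivity also drops out of Lemma~\ref{generate}; (b) the $\sigma$-equivariance, which is what forces $(r_i-\Id)|_{\Ho^4_w}$ to be a rank-one operator with image $\C\delta_w^i$ and kernel $(\delta_{w^2}^i)^{\perp}\cap\Ho^4_w$; and (c) the non-vanishing $\langle\delta_w^i,\delta_{w^2}^i\rangle\neq0$, which is immediate from $P_i$ being non-degenerate with $\delta_w^i,\delta_{w^2}^i$ isotropic. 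Once (a)--(c) are in place, everything else is bookkeeping, with the perfect pairing $\Ho^4_w\times\Ho^4_{w^2}\to\C$ playing the role that the intersection form plays in the classical vanishing-cycle argument.
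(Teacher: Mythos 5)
Your proof is correct and rests on the same two pillars as the paper's: the Picard--Lefschetz description of the local monodromies $r_i$ to extract vanishing cycle classes from a nonzero subrepresentation, and Lemma \ref{conjugate} to propagate one such class to all of them. The packaging differs in a way worth noting. The paper fixes a nonzero $y$ in an arbitrary $\pi_1(U)$-submodule $W$ of $\Ho^4_{prim}(X_b,\C)$, writes $y=\sum_i y_i$ with $y_i\in P_i:=\mathrm{Span}(\delta_w^i,\delta_{w^2}^i)$, uses that $r_j$ fixes each $y_i$ with $i\neq j$, and computes $r_j(y)-y$ via a distinguished basis $\Delta_1^j,\Delta_2^j$ of $P_j$ to conclude that $W$ contains $\delta_w^j$ or $\delta_{w^2}^j$, hence $\Ho^4_w$ or $\Ho^4_{w^2}$; this in fact proves a little more than stated (every nonzero $\pi_1(U)$-submodule of $\Ho^4_{prim}$ contains an entire eigensummand). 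You instead treat each $\sigma$-eigensummand separately and establish the cleaner identity $(r_i-\Id)(x)=c_i\langle x,\delta_{w^2}^i\rangle\,\delta_w^i$ for $x\in\Ho^4_w$, derived directly from the canonical $\sigma$-stable orthogonal splitting $\Ho^4_{prim}=P_i\oplus P_i^{\perp}$, the isotropy of $\Ho^4_w$, and the resulting perfect pairing $\Ho^4_w\times\Ho^4_{w^2}\to\C$. This is slightly more robust than the paper's phrasing: the decomposition $y=\sum_i y_i$ is not canonical unless the $P_i$ are in direct sum, and the assertion $r_j(y_i)=y_i$ for $i\ne j$ tacitly presupposes $P_i\subseteq P_j^{\perp}$, which is not automatic for vanishing cycles attached to distinct critical values of a pencil. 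Your rank-one formula amounts to replacing $y_j$ by the $P_j$-component of $y$ in the orthogonal decomposition $P_j\oplus P_j^{\perp}$, which silently repairs this, and it also makes explicit where the polarization enters (nondegeneracy of cup product on $\Ho^4_{prim}$ forces any nonzero $V\subseteq\Ho^4_w$ to pair nontrivially with some $\delta_{w^2}^i$). Complete reducibility of $\Ho^4_{prim}$ then comes for free, either as a direct sum of irreducibles or via Deligne's semisimplicity theorem as you observe.
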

\begin{proof}
Let $W$ be a non-zero subrepresentation of $\Ho^4_{prim}(X_b)$. We claim that $W$ contains $\Ho^4_w$ or $\Ho^4_{w^2}$. In fact, it follows from Lemma \ref{vc} that  $\Ho^4_{prim}(X_b)$ is generated by $\delta_w^i$ and $\delta_{w^2}^i$. For a nonzero element $y\in W$, it follows that $y=\sum\limits_{i=1}^m y_i$ where \[y_i\in \mathrm{Im}(\Ho^4(\Mil_{f,x_i}))=\mathrm{Span}(\delta_w^i,\delta_{w^2}^i).\]It is clear that the local monodromy operator $r_j$ fixes $\mathrm{Im}(\Ho^4(\Mil_{f,x_i},\C))$ if $i\neq j$.

We use the notations following \cite[Chapter 3]{singhyper}. On the space $\Ho_4(\Mil_{f,x_i})$, there is an intersection form $<,>_i$. It is nondegenerate on $\Ho_4(\Mil_{f,x_i})$ since the singularity is of $A_2$-type, see \cite[Chap 3, Prop 4.7(A) and Thm 4.10]{singhyper}. 

Recall the Picard-Lefschetz transformation \cite[Chap 3, 3.11]{singhyper}:

Suppose that $T_k$ is an endomorphism of $\Ho_4(\Mil_{f,x_i})$ as follows:
\[T_k(z)=y\pm <z,\Delta_k^i>_i\Delta_k^i\]
where $k=1,2$ and $\{\Delta_k^i\}$ is a distinguished basis of vanishing cycles $\Ho_4(\Mil_{f,x_i})$, see \cite[Chapter 3, Def 3.4]{singhyper}. The monodromy operator $r_i$ on $\Ho_4(\Mil_{f,x_i})$ is the composition of $T_1$ and $T_2$, see \cite[Chapter 3, 3.10]{singhyper}. Therefore, we have
\[r_j(y_j)=y_j\pm <y_j,\Delta_1^j>_j\Delta_1^j\pm(<y_j,\Delta_2^j>_j\pm <y_j,\Delta_1^j>_j<\Delta_1^j,\Delta_2^j>_j)\Delta_2^j.\]
If $y\neq 0$, then $y_j\neq 0$ for some $j$. Since $r_j(y_i)=y_i$ ($i\neq j$), we have
\[r_j(y)=(\sum\limits_{i\neq j}^my_i)+r_j(y_j)\]
\[=y\pm <y_j,\Delta_1^j>_j\Delta_1^j\pm(<y_j,\Delta_2^j>_j\pm <y_j,\Delta_1^j>_j<\Delta_1^j,\Delta_2^j>_j)\Delta_2^j.\]

Since the intersection form $<,>_j$ is nondegenerate, it follows that
\[r_j(y)=y+A\Delta_1^j+B\Delta_2^j=y+A'\delta_w^j+B'\delta_{w^2}^j\]
where $A$ or $B\neq 0$, $A'$ or $B'\neq 0$. It implies that
\begin{itemize}
\item $c:=A'\delta_w^j+B'\delta_{w^2}^j=r_j(y)-y\in W$ and
\item $r_j(c)=A'w\delta_w^j+B'w^2\delta_{w^2}^j \in W$.
\end{itemize}
In particular, we conclude that $\delta_w^j$ or $\delta_{w^2}^j$ is in the subrepresentation $W$. 
 It follows from Lemma \ref{conjugate} that $W$ contains $\Ho^4_w$ or $\Ho^4_{w^2}$.
\end{proof}

\begin{prop}\label{pic}
Suppose that $X_3$ is a smooth cubic fourfold defined by a cubic equation \[X_0^3+G(X_1,\ldots,X_5)=0.\] If $X_3$ is very general, then the middle $\Picard$ number of $X_3$ is one.
\end{prop}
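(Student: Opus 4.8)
The plan is to show that the middle Picard number of a very general such $X_3$ equals one by producing a cubic fourfold in the $3$-cyclic pencil $X_{\ao}$ whose primitive cohomology has irreducible monodromy of the right type, and then invoking the global invariant cycle theorem together with a dimension count. First I would choose the cubic form $G$ so that, after writing $G = F_1 + tF_2$ for suitable $F_1, F_2$ (equivalently, so that $X_3$ appears as a member of a pencil of the shape in \eqref{cubeq}), the hypotheses preceding Lemma \ref{vc} are met: the threefolds $V_1$, $V_2$ are smooth, $V_1 \cap V_2$ is smooth, and every fiber of the associated pencil of cubic threefolds has at most one node, so that every fiber of $f: X_{\ao} \to \ao$ has at most one $A_2$-singularity. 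This is possible by the theory of Lefschetz pencils (a general pencil through a fixed smooth $V_1$ works), and a very general $X_3$ of the form $X_0^3 + G = 0$ is a very general member of such a pencil.

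The core input is Proposition \ref{irr}: over $U$ the monodromy representation of $\pi_1(U)$ on $\Ho^4_{\mathrm{prim}}(X_b,\C)$ decomposes as $\Ho^4_w \oplus \Ho^4_{w^2}$, and each summand is irreducible. Now suppose the middle Picard number of a very general $X_b$ (which I may take to be our $X_3$) is at least two. A transcendental/Hodge-theoretic argument — the global invariant cycle theorem, as in Deligne, or equivalently the Noether–Lefschetz type statement of \cite[Chapter 3]{Voisinhodge} — shows that the subspace of $\Ho^4_{\mathrm{prim}}(X_b,\C)$ spanned by the $(2,2)$-classes that remain Hodge for the very general member is a $\pi_1(U)$-subrepresentation; more precisely, the space of monodromy-invariant classes in $\Ho^4_{\mathrm{prim}}$ contains the algebraic part for the very general fiber, and conversely any $\pi_1(U)$-invariant rational $(2,2)$-class is algebraic on every fiber. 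Combining this with the irreducible decomposition, any such nonzero subrepresentation is a sum of $\Ho^4_w$ and/or $\Ho^4_{w^2}$, hence has dimension at least $\dim \Ho^4_w$. But $\Ho^4_w \supseteq \Ho^{3,1}$ is of Hodge type with nonzero $(3,1)$-part (by the Griffiths residue description referenced at \eqref{residue}), so it cannot consist of $(2,2)$-classes unless it is zero — contradiction. Hence the only algebraic classes on the very general $X_b$ are multiples of $h^2$, i.e. the middle Picard number is one.

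The step I expect to be the main obstacle — and the one I would spell out most carefully — is the bridge between the monodromy irreducibility of Proposition \ref{irr} and the statement about Picard numbers: namely, that ``very general in the pencil $X_{\ao}$'' forces the algebraic part of $\Ho^4_{\mathrm{prim}}$ to be a $\pi_1(U)$-subrepresentation, and that its being nonzero would contradict the Hodge type of $\Ho^4_w$ and $\Ho^4_{w^2}$. One must be careful that $U$ is only a one-dimensional base, so the ``very general'' here is very general on the one-parameter family $f: X_U \to U$, not in the full moduli of cubic fourfolds; the argument gives that a very general member of \emph{this particular} family has Picard number one, which is exactly what is claimed since $X_3$ with defining equation $X_0^3 + G$ runs over (a dense subset of) such members as $G$ varies. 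A secondary technical point is to check that the eigenspace decomposition $\Ho^4_{\mathrm{prim}} = \Ho^4_w \oplus \Ho^4_{w^2}$ is defined over $\Q(w)$ and interacts correctly with the $\Q$-Hodge structure, so that an invariant $\Q$-class which is algebraic would have to be supported in one eigenspace after extending scalars, and then the mismatch with the Hodge type of $\Ho^4_w$ yields the contradiction; this is where the assumption that the $(3,1)$-part of $\Ho^4_w$ is nonzero (so in particular $\Ho^4_w \neq 0$) is used.
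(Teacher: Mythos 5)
Your proposal follows essentially the same route as the paper: invoke Proposition \ref{irr} for the irreducible decomposition $\Ho^4_{\mathrm{prim}}(X_b,\C)=\Ho^4_w\oplus\Ho^4_{w^2}$, use the fact (via Voisin's Hodge-loci results) that for a very general fiber the space $\mathrm{Hdg}(X_b)_\C\cap\Ho^4_{\mathrm{prim}}(X_b,\C)$ is a $\pi_1(U)$-subrepresentation, and rule out the nonzero possibilities because $\Ho^4_w$ and $\Ho^4_{w^2}$ carry $\Ho^{3,1}$ and $\Ho^{1,3}$ respectively. Two small remarks: the paper phrases the reduction more cleanly — since Hodge loci are countable unions of proper analytic subsets, it suffices to exhibit a single $G$ with middle Picard number one (rather than realizing a given very general $X_3$ as a very general member of a pencil); and your ``secondary technical point'' about a $\Q$-class being supported in a single eigenspace is not actually needed, since the argument works at the level of the complex subspace $\mathrm{Hdg}(X_b)_\C\cap\Ho^4_{\mathrm{prim}}$, which by Proposition \ref{irr} must be $0$, $\Ho^4_w$, $\Ho^4_{w^2}$, or all of $\Ho^4_{\mathrm{prim}}$, regardless of how individual rational classes distribute across eigenspaces.
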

\begin{proof}
Recall that Hodge loci are unions of countable many proper analytic subsets, see \cite{hodgelocus}. To prove the proposition, it suffices to show that there exists a smooth cubic fourfold defined by \[X_0^3+G(X_1,\ldots,X_5)=0\] whose middle Picard number is one. Consider the family $X_{\ao}\rightarrow \ao$ of cubic fourfolds which we describe at the beginning of this section. With the notations as before, if $b\in U$ is a very general point which is out of the Hodge locus of $X_{\ao}\rightarrow \ao$, then $\pi_1(U)$ preserves the rational Hodge classes $\mathrm{Hdg}(X_b)_{\Q}$, see \cite{hodgelocus}. Therefore, the vector space \[\mathrm{Hdg}(X_b)_{\C}\cap \Ho^4_{prim}(X_b,\C)\] is a subrepresentation of the $\pi_1(U)$-representation $\Ho^4_{prim}(X_b)$. It follows from Proposition \ref{irr} that $\mathrm{Hdg}(X_b)_{\C}\cap \Ho^4_{prim}(X_b,\C)$ is either zero, or $\Ho^4_{prim}(X_b)$, or $\Ho^4_w$, or $\Ho^4_{w^2}$. However, the last three cases are impossible since they contain either $\Ho^{3,1}$ or $\Ho^{1,3}$. So $\mathrm{Hdg}(X_b)_{\C}\cap \Ho^4_{prim}(X_b,\C)=0$. In other words, the middle Picard number of $X_b$ is one.

\end{proof}

\section{Lattices and Automorphisms}
In this section, we will use the theory of lattices to classify the automorphism groups of cubic fourfolds. For the theory of lattices, we refer to \cite{Nik} and \cite[Chapter 14]{K3}. We use the notations following \cite[Chapter 14]{K3}. Suppose that $F$ is a cubic form defining a smooth projective cubic fourfold $X_3$. 
Denote by $\sigma$ the $(3,1)$-form \[\mathrm{Res_{X_3}}\left(\frac{\sum\limits_{i=0}^5 (-1)^{i} X_i~dX_0 \wedge \cdots \widehat{dX_i} \cdots \wedge dX_5}{F^2}\right)\in \Ho^{3,1}(X_3).\] 
It follows from the Griffiths residues that \begin{equation} \label{residue}
\Ho^{3,1}(X_3)=\C< \sigma >.
\end{equation}
Let $T(X_3)$ be the smallest primitive integral sub-Hodge structure of $\Ho^4(X_3,\Z)$ containing $\Ho^{3,1}(X_3)$. It is the transcendental part of the Hodge structure on $\Ho^4(X_3,\Z)$. We have a natural action of the automorphism group $\Aut(X_3)$ on the one-dimensional vector space $\Ho^{3,1}(X_3)$. Since the automorphism group $\Aut(X_3)$ is finite, we have the following exact sequence
\begin{equation}\label{eq5}
\xymatrix{1 \ar[r]& \Aut_s(X_3) \ar[r] &\Aut(X_3) \ar[r]^p & \mu_m \ar[r] &1}
\end{equation}
where $\mu_m$ is the group of m-th roots of unity acting as multiplication on $\Ho^{3,1}(X_3)=\C$ and the kernel $ \Aut_s(X_3)$ is the subgroup of automorphisms preserving $\sigma$. \\
It follows from Poincar\'e duality and Lemma \ref{torsionfree} that $(\Ho^4(X_3,\Z),<,>)$ is a unimodular lattice. Denote this lattice by $\Lambda$. 
We define the middle Picard group $\MP(X_3)$ as the sublattice of $\Ho^4(X_3,\Z)$ which is generated by algebraic cycles
\[\MP(X_3):=\mathrm{Im}(cl:\mathrm{CH}^2(X_3)\rightarrow \Ho^4(X_3,\Z))=\Ho^4(X_3,\Z)\cap \Ho^{2,2}(X),\]
where the second equality follows from that the integral Hodge conjecture holds for smooth cubic fourfolds.

\begin{lemm}\label{tt}
If $g\in \Aut_s(X_3)$, then $g^*=\Id$ on $T(X_3)$.
\end{lemm}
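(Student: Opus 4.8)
The plan is to combine the minimality built into the definition of $T(X_3)$ with the elementary fact that the fixed part of a Hodge isometry is again a primitive sub-Hodge structure, exactly as for the transcendental lattices of $K3$ surfaces.

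Concretely, since $g$ is an automorphism of the complex variety $X_3$, the pullback $g^{*}$ is an isometry of the unimodular lattice $\Lambda=\Ho^{4}(X_3,\Z)$ which preserves the Hodge decomposition $\Ho^{4}(X_3,\C)=\bigoplus_{p+q=4}\Ho^{p,q}(X_3)$. Because $T(X_3)$ is characterised purely in terms of the Hodge structure — it is the smallest primitive sub-Hodge structure of $\Lambda$ whose complexification contains $\Ho^{3,1}(X_3)$ — it is stable under $g^{*}$; write $\tau$ for the induced Hodge isometry of $T(X_3)$. I would then consider the invariant sublattice $T(X_3)^{\tau}=\{x\in T(X_3)\mid \tau(x)=x\}$ and check two things. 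First, $T(X_3)^{\tau}$ is primitive in $T(X_3)$, hence in $\Lambda$: it is the intersection of $T(X_3)$ with the $\Q$-subspace $\Ker(\tau-\Id)\subseteq T(X_3)\otimes\Q$, so the quotient $T(X_3)/T(X_3)^{\tau}$ embeds into a torsion-free group, and a primitive sublattice of the primitive sublattice $T(X_3)$ is primitive in $\Lambda$. Second, $T(X_3)^{\tau}$ is a sub-Hodge structure: $\tau$ preserves each summand $T^{p,q}:=(T(X_3)\otimes\C)\cap\Ho^{p,q}(X_3)$ of the induced Hodge decomposition on $T(X_3)\otimes\C$, so that $(T(X_3)^{\tau})\otimes\C=\bigoplus_{p+q=4}(T^{p,q})^{\tau}$.

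Finally I would invoke the hypothesis $g\in\Aut_s(X_3)$, which by the definition preceding (\ref{eq5}) means precisely $g^{*}\sigma=\sigma$, i.e. $\sigma\in(T^{3,1})^{\tau}$. Since $\Ho^{3,1}(X_3)=\C\sigma$ by (\ref{residue}), the complexification of $T(X_3)^{\tau}$ contains $\Ho^{3,1}(X_3)$. By the minimality characterising $T(X_3)$ among primitive sub-Hodge structures with this property, $T(X_3)^{\tau}$ must coincide with $T(X_3)$, i.e. $\tau=\Id$; this is exactly the assertion $g^{*}=\Id$ on $T(X_3)$.

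The argument is essentially formal, so I do not anticipate a genuine obstacle. The only step deserving a line of justification is the verification that $T(X_3)^{\tau}$ is simultaneously primitive and a sub-Hodge structure, since that is precisely what is required in order to apply the minimality in the definition of $T(X_3)$.
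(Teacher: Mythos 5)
Your proof is correct, but it follows a genuinely different route from the paper's. You observe that the fixed sublattice $T(X_3)^{g^*}$ is automatically a primitive sub-Hodge structure of $\Lambda$ (primitivity because it is the kernel of the integral map $g^*-\Id$ on $T(X_3)$, and the sub-Hodge-structure property because $g^*$ preserves the bigrading, so the fixed space over $\C$ splits as $\bigoplus(T^{p,q})^{g^*}$), and since $g^*\sigma=\sigma$ places $\Ho^{3,1}$ inside $T(X_3)^{g^*}\otimes\C$, the minimality in the very definition of $T(X_3)$ forces $T(X_3)^{g^*}=T(X_3)$. This is precisely the standard argument for transcendental lattices of $K3$ surfaces, and it is self-contained. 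The paper argues instead with the linear functional $\psi=\langle\cdot,\sigma\rangle$ on $T(X_3)$: invariance of $\psi$ under $g^*$ shows $g^*(x)-x\in\Ker(\psi)$, the nonvanishing of $\langle\sigma,\bar\sigma\rangle$ identifies $\Ker(\psi)$ with $T(X_3)\cap\MP(X_3)$, and the equality $T(X_3)^{\perp}=\MP(X_3)$ together with the Hodge index theorem shows $T(X_3)\cap\MP(X_3)=0$. Your argument is shorter and more conceptual, avoiding the index theorem entirely; the paper's computation has the side benefit of simultaneously establishing $T(X_3)^{\perp}=\MP(X_3)$ and $T(X_3)\cap\MP(X_3)=0$, which are reused verbatim in Remark~\ref{rmk} and in the proof of Lemma~\ref{st}, so if you adopt your version you would still need to prove those two facts separately for the later lemmas.
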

\begin{proof}
Consider a map as follows
\[\psi:T(X_3)\rightarrow \C.\]
It associates to $x\in T(X_3)$ the intersection product $<x,\sigma>$ of $x$ and $\sigma$ . Since $g^*$ preserves the intersection product and $g \in \Aut_s(X_3)$, we have
\[\psi(g^*(x))=<g^*(x),\sigma>=<g^*(x),g^*\sigma>=\psi(x).\]
It follows that $g^*(x)-x\in T(X_3)\cap \MP(X_3)$.
Since we know $<\sigma, \overline{\sigma}> \neq 0$, we conclude $\Ker(\psi) \subseteq T(X_3)\cap \MP(X_3)$. 

We claim that $T^{\perp}(X_3)=\MP(X_3)$. In fact, since $\Ho^{3,1}(X_3)$ is orthogonal to $\MP(X_3)$, we conclude that $T(X_3) \subseteq \MP(X_3)^{\perp}$ by the minimality of $T(X_3)$. Therefore, we have $\MP(X_3)^{\perp \perp}\subseteq T(X_3)^{\perp}$. On the other hand, an integral class $x$ which is orthogonal to $T(X_3)$ is orthogonal to $\Ho^{3,1}(X_3)$. Therefore, we have $x\in \Ho^{2,2}(X_3).$ It implies that $T(X_3)^{\perp}\subseteq \MP(X_3)$. Obviously, we have $\MP(X_3)\subseteq \MP(X_3)^{\perp \perp} $. In summary, we get the following inclusions
\[\MP(X_3)\subseteq \MP(X_3)^{\perp\perp}\subseteq T(X_3)^{\perp}\subseteq \MP(X_3).\]It follows that $\MP(X_3)= T(X_3)^{\perp}$. We show the claim.

By the claim, if $y\in T(X_3)\cap \MP(X_3)$, then $<y,y>=0$ and $<y,\Ho^2>=0$ where $H$ is $c_1(\OO_{X_3}(1))$. It follows that $y=0$ by the Hodge Index theorem for $\Ho^4(X_3)$. Hence, we have $g^*(x)-x=0$ for all $x\in T(X_3)$. We prove the lemma.
\end{proof}
\begin{remark}\label{rmk}
 A similar argument shows that \[T(X_3)_{\Q}\cap \MP(X_3)_{\Q}=0.\]
\end{remark}
\begin{lemm}\label{lattice}
Suppose that $(\Lambda,<,>)$ is a unimodular lattice. Let $\Lambda_1$ be a primitive sublattice of $\Lambda$. Denote by $\Lambda_2$ the sublattice $\Lambda_1^{\perp}$. Define $A_{\Lambda_i}$ to be \[\frac{\Lambda_i^*}{\Lambda_i}\text{~where the inclusion~} \Lambda_i\subseteq \Lambda_i^* \text{~is induced by the intersection form $<,>$,}\]see \cite[Chapter 14]{K3}. If $\Lambda_1 \cap \Lambda_2=0$, then we have $A_{\Lambda_1} \simeq A_{\Lambda_2}$ via a natural identification.
\end{lemm}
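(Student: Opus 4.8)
The plan is to build an explicit isomorphism $A_{\Lambda_1}\to A_{\Lambda_2}$ using the unimodularity of $\Lambda$ and the hypothesis $\Lambda_1\cap\Lambda_2=0$. First I would set up the two natural maps. Unimodularity gives an identification $\Lambda\cong\Lambda^*$ via $<,>$, so restriction of functionals yields a map $r_i:\Lambda\to\Lambda_i^*$ for $i=1,2$, namely $r_i(x)=<x,->|_{\Lambda_i}$. Because $\Lambda_i$ is primitive, $r_i$ is surjective (a functional on $\Lambda_i$ extends to $\Lambda$ since $\Lambda/\Lambda_i$ is torsion-free, and then is realized by an element of $\Lambda$ by unimodularity). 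The kernel of $r_i$ is exactly $\Lambda_i^\perp=\Lambda_j$ (where $\{i,j\}=\{1,2\}$), so $r_i$ induces an isomorphism $\Lambda/\Lambda_j\xrightarrow{\ \sim\ }\Lambda_i^*$.

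The key step is to identify the image of $\Lambda_1\oplus\Lambda_2$ inside $\Lambda$ and relate the two quotients. Using $\Lambda_1\cap\Lambda_2=0$, the sum $\Lambda_1+\Lambda_2$ is a finite-index subgroup $\Lambda_1\oplus\Lambda_2\hookrightarrow\Lambda$; I would consider the composite
\[
\Lambda_1^*\xleftarrow{\ \sim\ }\Lambda/\Lambda_2\ \twoheadleftarrow\ \Lambda/(\Lambda_1\oplus\Lambda_2)\ \cdots
\]
no — more cleanly: restrict $r_1$ to $\Lambda_1$ itself, which is just the inclusion $\Lambda_1\hookrightarrow\Lambda_1^*$, and observe that $r_1$ factors as $\Lambda\twoheadrightarrow\Lambda/\Lambda_2\xrightarrow{\sim}\Lambda_1^*$. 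Hence $A_{\Lambda_1}=\Lambda_1^*/\Lambda_1\cong (\Lambda/\Lambda_2)/(\text{image of }\Lambda_1)=\Lambda/(\Lambda_1\oplus\Lambda_2)$, where the last equality uses $\Lambda_1\cap\Lambda_2=0$ so that the image of $\Lambda_1$ in $\Lambda/\Lambda_2$ is $(\Lambda_1\oplus\Lambda_2)/\Lambda_2$. By the symmetric argument with $r_2$, $A_{\Lambda_2}\cong \Lambda/(\Lambda_1\oplus\Lambda_2)$ as well. Composing these two identifications produces the desired natural isomorphism $A_{\Lambda_1}\cong A_{\Lambda_2}$.

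Finally I would check that this is the ``natural'' identification in the sense relevant for the discriminant-form (gluing) formalism: namely that it is, up to sign, an anti-isometry for the finite quadratic forms $q_{\Lambda_1}$ and $q_{\Lambda_2}$. This follows by tracking the construction: for $x\in\Lambda$ with images $\bar x_i\in A_{\Lambda_i}$, one writes $x=\tfrac{1}{d}(a_1+a_2)$-type representatives with $a_i\in\Lambda_i$, and computes $<x,x>\in\Z$ using bilinearity together with $<a_1,a_2>=0$; the cross terms vanish and one gets $q_{\Lambda_1}(\bar x_1)+q_{\Lambda_2}(\bar x_2)\equiv <x,x>\equiv 0 \pmod{2\Z}$, giving $q_{\Lambda_2}\circ\varphi=-q_{\Lambda_1}$.

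I expect the main obstacle to be purely bookkeeping: showing that the composite isomorphism $A_{\Lambda_1}\to A_{\Lambda_2}$ is well-defined independently of choices of representatives and that it genuinely coincides with the standard identification used in Nikulin's theory (so that it can be cited later for lattice-gluing computations on $\Ho^4$ of cubic fourfolds). The existence of \emph{some} isomorphism is immediate from the two displayed identifications with $\Lambda/(\Lambda_1\oplus\Lambda_2)$; the only real content is functoriality and compatibility with the quadratic forms, and the hypothesis $\Lambda_1\cap\Lambda_2=0$ is exactly what makes $\Lambda/(\Lambda_1\oplus\Lambda_2)$ the common target rather than a mere subquotient.
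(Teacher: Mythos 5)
Your proof is correct and takes essentially the same route as the paper: both identify $A_{\Lambda_1}$ and $A_{\Lambda_2}$ with the common quotient $\Lambda/(\Lambda_1\oplus\Lambda_2)$ via the map $x\mapsto \langle x,-\rangle|_{\Lambda_i}$, using unimodularity and primitivity for surjectivity and $\Lambda_1\cap\Lambda_2=0$ to pin down the kernel. The extra paragraph on the anti-isometry of discriminant quadratic forms goes beyond what the lemma asserts but is accurate and is indeed the reason this identification matters for the lattice-gluing arguments that follow.
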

\begin{proof}
We mimic the proof of \cite[Proposition 0.2, Chapter 14]{K3}. Suppose that $\psi$ is the composition of $i$ and $\pi$
\[\xymatrix{\psi:\La \ar@{^{(}->}[r]^i & \La_1^*\ar@{->>}[r]^{\pi} & \frac{\La^*_1}{\La_1}}\]
where $i$ sends $x$ to the linear map $<x,->$ on $\La_1$ and $\pi$ is the natural quotient. The kernel $\Ker(\psi)$ of $\psi$ is given by
\begin{center}
$\{x\in \La|$ there is $x'\in \La_1$ such that $<x,y>=<x',y>$ for all $y\in \La_1\}.$\\
\end{center}
Suppose that $x\in \Ker(\psi)$. There is $x'\in\La_1$ such that $x-x'\in \La_2$. It follows that \[x \in \La_1\oplus \La_2.\] Therefore, we have a natural inclusion
\[\overline{\psi}: \frac{\La}{\La_1 \oplus \La_2} \hookrightarrow \frac{\La_1^*}{\La_1}=A_{\La_1}.\] Since $\La_1 \hookrightarrow \La$ is primitive and $\La$ is unimodular, the map $\La\simeq \La^*\rightarrow \La^*_1$ is surjective. Therefore, the natural map $\overline{\psi}$ is surjective.  We conclude that $\overline{\psi}$ is an isomorphism. A similar argument shows the same result for $A_{\La_2}$. So we have a natural identification
\[A_{\La_1}\simeq \frac{\La}{\La_1\oplus \La_2} \simeq A_{\La_2}.\]
\end{proof}

\begin{lemm} \label{st}
$\Aut_s(X_3)=\{\Id\}$ if the rank of $\MP(X_3)$ is at most $2$.
\end{lemm}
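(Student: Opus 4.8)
The plan is to reduce, via Lemma~\ref{tt}, to controlling the action of an element $g\in\Aut_s(X_3)$ on the algebraic lattice $\MP(X_3)$, and then to appeal to the injectivity already proved in Theorem~\ref{mainthmmap} (equivalently Proposition~\ref{injective}). By Lemma~\ref{tt} we have $g^*=\Id$ on $T(X_3)$; by Remark~\ref{rmk} together with the identity $\MP(X_3)=T(X_3)^{\perp}$ established inside the proof of Lemma~\ref{tt}, the lattices $T(X_3)$ and $\MP(X_3)$ are complementary primitive sublattices of $\Lambda:=\Ho^4(X_3,\Z)$ with $T(X_3)\cap\MP(X_3)=0$, so $T(X_3)\oplus\MP(X_3)$ has finite index in $\Lambda$. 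Hence it will suffice to prove that $g^*=\Id$ on $\MP(X_3)$: this yields $g^*=\Id$ on $\Lambda_{\Q}=\Ho^4(X_3,\Q)$ and therefore $g=\Id$ by Proposition~\ref{injective}.

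Since $g$ is linear (Lemma~\ref{autlin}), $g^*$ fixes the class $H^2$, where $H=c_1(\OO_{X_3}(1))$; moreover $H^2$ is primitive in $\MP(X_3)$ (if $H^2=mw$ with $w\in\MP(X_3)$, then $\langle w,w\rangle=3/m^2\in\Z_{>0}$ forces $m=1$) and $\langle H^2,H^2\rangle=3$. If the rank of $\MP(X_3)$ is $1$, then $\MP(X_3)=\Z H^2$ and $g^*=\Id$ on $\MP(X_3)$. So assume the rank of $\MP(X_3)$ is $2$. Since the intersection form restricted to $\MP(X_3)$ is positive definite (Hodge--Riemann bilinear relations), $\MP(X_3)$ is a positive definite rank-two lattice and $g^*|_{\MP(X_3)}$ is a finite-order isometry fixing the nonzero vector $H^2$; such an isometry is either $\Id$ or the involution $\psi:=-s_{H^2}$ given by $x\mapsto \tfrac{2}{3}\langle x,H^2\rangle H^2-x$. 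The map $\psi$ preserves $\MP(X_3)$ only when $\langle\MP(X_3),H^2\rangle\subseteq 3\Z$, in which case a suitable basis gives $\MP(X_3)=\langle H^2\rangle\oplus\langle u\rangle$ with $H^2\perp u$, $\langle u,u\rangle=c>0$, $\disc\MP(X_3)=3c$, and $\psi(u)=-u$.

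It remains to rule out $g^*|_{\MP(X_3)}=\psi$. For this I would apply Lemma~\ref{lattice}: because $\Lambda$ is unimodular and $T(X_3),\MP(X_3)$ are complementary primitive sublattices with trivial intersection, the finite group $\Lambda/(T(X_3)\oplus\MP(X_3))$ is the graph of an isomorphism $A_{T(X_3)}\simeq A_{\MP(X_3)}$. Therefore any isometry of $\Lambda$ restricting to $(\Id,\psi)$ on $T(X_3)\oplus\MP(X_3)$ must induce on $A_{\MP(X_3)}$ the automorphism corresponding, under this isomorphism, to $\Id$ on $A_{T(X_3)}$, i.e.\ the identity; so $\psi$ acts trivially on $A_{\MP(X_3)}$. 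But on $A_{\MP(X_3)}\cong\Z/3\oplus\Z/c$ the map $\psi$ fixes the $\Z/3$-summand (generated by the class of $H^2/3$) and negates the $\Z/c$-summand, so triviality forces $c\mid 2$, i.e.\ $\disc\MP(X_3)=3c\in\{3,6\}$. However $\MP(X_3)$ is then a rank-two sublattice of $\Ho^4(X_3,\Z)$ which contains $H^2$ and consists of algebraic classes; by the theory of special cubic fourfolds (\cite{Hassett}) the discriminant $d$ of such a sublattice satisfies $d\equiv 0,2\pmod 6$ and $d\ge 8$, a contradiction. Hence $g^*|_{\MP(X_3)}=\Id$, and $g=\Id$ by Proposition~\ref{injective}.

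The main obstacle is the rank-two case, i.e.\ excluding the ``reflection'' isometry $-s_{H^2}$ of $\MP(X_3)$: the only leverage is the compatibility of $g^*$ with the transcendental lattice via the discriminant-form formalism of Lemma~\ref{lattice}, and this by itself eliminates every discriminant except $3$ and $6$; one still needs the input from the geometry of cubic fourfolds that no smooth cubic fourfold carries a rank-two algebraic lattice of discriminant $3$ or $6$ containing $H^2$. All the remaining steps are formal.
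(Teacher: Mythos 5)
Your proof is correct and follows essentially the same strategy as the paper's: reduce via Lemma \ref{tt} to showing $g^*=\Id$ on $\MP(X_3)$, handle rank one directly, and in rank two rule out the reflection by transporting the trivial action on $A_{T(X_3)}$ to $A_{\MP(X_3)}$ via Lemma \ref{lattice} and then appealing to Hassett's bound on discriminants of special cubic fourfolds. The only differences are cosmetic: you classify the finite-order isometries of the positive-definite rank-two lattice fixing $H^2$ abstractly (identity or reflection), whereas the paper reaches the same dichotomy through the $\det g^*=\pm1$ computation, and you invoke Hassett after the discriminant-form constraint narrows $\disc$ to $\{3,6\}$ whereas the paper invokes Hassett first to get $\disc\ge 12$ and then derives the contradiction on $A_{\MP(X_3)}$.
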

\begin {proof}
By Proposition \ref{injective}, it suffices to show that $g^*=\Id$ on $\Ho^4(X_3,\Q)$ for all $g\in \Aut_s(X_3)$ if  the rank of $\MP(X_3)$ is at most $2$. By Lemma \ref{tt} and the fact that $\MP(X_3)+T(X_3)$ has the same rank as $\Ho^4(X,\Z)$, we just need to prove $g^*=\Id$ on $\MP(X_3)$ since $g^*=\Id$ on $T(X_3)$ by definition. We consider the following two cases.\\
\begin{enumerate}
\item If the rank of $\MP(X_3)$ is one, then $\MP(X_3)$ is generated by $H\cup H$ where $H=c_1(\OO_{X_3}(1))$. We have $g^*(\Ho^2)=\Ho^2$ since $g$ is linear by Lemma \ref{autlin}. It follows that $g^*=\Id$ on $\MP(X_3)$.
\item If the rank of  $\MP(X_3)$ is two, then we can assume that $\MP(X_3)=<\Ho^2,S>$ by the fact that the subgroup $<\Ho^2>$ generated by $\Ho^2$ is primitive. 
As in (i), we have $g^*(\Ho^2)=\Ho^2$. Suppose that $g^*(S)=a\Ho^2+bS$ where $a$ and $b$ are integers. It follows from $\mathrm{det} (g^*)= 1$ or $-1$ that $b=1$ or $-1$ (respectively). On the other hand, we have that \begin{equation}\label{eq6}
\Ho^2\cdot S=g^*(\Ho^2)\cdot g^*(S)=3a+b \Ho^2\cdot S.
\end{equation} If $\mathrm{det}(g^*)=1$, then $b=1$. It follows from (\ref{eq6}) that $a=0$. Therefore, we have $g^*(S)=S$. We prove the lemma. 

Suppose that $\mathrm{det}(g)=-1$. Then $b=-1$. By the equation (\ref{eq6}), we have \[a=\frac{2S\cdot \Ho^2}{3}, \mathrm{ ~i.e.,~~~} g^*(S)=\frac{2}{3}(S\cdot \Ho^2) \Ho^2-S.\] In particular, it follows that $S\cdot \Ho^2$ is divided by $3$. Therefore, the lattice $(\MP(X_3),<,>)$ is equivalent to the lattice
\begin{center}
$(\Ho^2)\oplus_{\perp}(S')=(3)\oplus_{\perp}(2n)$
\\with $<\Ho^2, \Ho^2>=3$, $<\Ho^2,S'>=0$
\end{center}
where $S'=S-\frac{S\cdot \Ho^2}{3}\Ho^2$. Note that the discrimiant of $\MP(X_3)$ is $6n$. It follows from \cite[Theorem 1.0.1]{Brend} that $n \geq 2$. 

It is clear that \[g^*(S')=-S'.\]  It follows that $g^*=(\Id,-\Id)$ on $ A_{\MP(X_3)}\cong \Z/3\Z\oplus \Z/2n\Z$. Since $n \geq 2$, we know $g^*\neq \Id$. On the other hand, by Lemma \ref{tt}, we know $g^*=\Id$ on $A_{T(X_3)}$. By the claim in the proof of Lemma \ref{tt}, we have $\MP(X_3)=T(X_3)^{\perp}$. Under the natural identification $A_{\MP(X_3)}=A_{T(X_3)}$ (see Lemma \ref{lattice}), we have $\Id=g^*\ne \Id$ which is absurd. It is a contradiction. So $\mathrm{det}(g)$ can not be $-1$.
\end{enumerate}
\end{proof}

\begin{cor} \label{cora}
The automorphism group $\Aut(X_3)$ is the cyclic group $\mu_m$ if the middle $\Picard$ number of $X_3$ is at most 2.
\end{cor}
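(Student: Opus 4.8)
The plan is to deduce this corollary immediately from Lemma \ref{st} together with the exact sequence (\ref{eq5}). First I would recall that the \emph{middle Picard number} of $X_3$ is by definition the rank of the lattice $\MP(X_3)$, so the hypothesis is precisely that $\mathrm{rank}\,\MP(X_3)\le 2$. Lemma \ref{st} then yields $\Aut_s(X_3)=\{\Id\}$, i.e.\ the kernel of $p$ in (\ref{eq5}) is trivial.

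Next I would feed this into the exact sequence (\ref{eq5}),
\[
1 \longrightarrow \Aut_s(X_3) \longrightarrow \Aut(X_3) \xrightarrow{\ p\ } \mu_m \longrightarrow 1 .
\]
Since the kernel $\Aut_s(X_3)$ is trivial, the homomorphism $p$ is injective; and $p$ is surjective by the very definition of $m$, which is the order of the image of $\Aut(X_3)$ acting by scalars on the one-dimensional space $\Ho^{3,1}(X_3)=\C\sigma$. Hence $p$ is an isomorphism $\Aut(X_3)\xrightarrow{\sim}\mu_m$, and $\mu_m$ is a finite subgroup of $\C^\times$, therefore cyclic of order $m$.

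I do not expect any genuine obstacle here: the whole substance of the statement is already carried by Lemma \ref{st}, whose proof rests on the lattice-theoretic analysis of $\MP(X_3)$ (using Lemma \ref{tt}, Lemma \ref{lattice}, the Hodge index theorem, and the discriminant bound from \cite{Brend}). The only point worth spelling out in a single sentence is the right-exactness of (\ref{eq5}), i.e.\ the surjectivity of $p$, which holds by construction since $m$ is defined to be exactly the order of the cyclic group through which $\Aut(X_3)$ acts on $\Ho^{3,1}(X_3)$.
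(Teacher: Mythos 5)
Your proposal is correct and takes exactly the same route as the paper: the paper's own proof of Corollary \ref{cora} is one line, citing Lemma \ref{st} (the previous lemma) and the exact sequence (\ref{eq5}), which is precisely your argument. Your added observation that surjectivity of $p$ is built into the definition of $m$ as the order of the image of $\Aut(X_3)$ acting on $\Ho^{3,1}(X_3)$ is a useful clarification that the paper leaves implicit.
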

\begin{proof}
It follows from the previous lemma and the exact sequence (\ref{eq5}).
\end{proof}

The rest of this section is to figure out what the number $m$ is.
\begin{lemm} \label{absurd}
Let $\La$ be a nondegenerate lattice of rank at most $2$. For $A_{\La}$, if one of the following assumptions holds
\begin{enumerate}
\item $\La=\Z<H>$ and $<H,H>$ is at least $3$,
\item $rk(\La)=2$ and $\disc(\La)>4$,
\end{enumerate}
then $\Id_{A_{\La}} \neq -\Id_{A_{\La}}$.
\end{lemm}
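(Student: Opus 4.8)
The plan is to prove Lemma \ref{absurd} by a direct computation of the discriminant group $A_\La$ together with its canonical pairing, reducing in each case to showing that $A_\La$ has an element of order $>2$ or, more precisely, that the identity and the involution $-\Id$ do not agree. First I would recall that $-\Id = \Id$ on a finite abelian group $A$ if and only if $2a = 0$ for every $a \in A$, i.e. if and only if $A$ is an elementary abelian $2$-group, $A \simeq (\Z/2\Z)^{\oplus r}$. So in both cases it suffices to exhibit an element of $A_\La$ whose order is not $1$ or $2$, equivalently to show $A_\La$ is not killed by $2$.

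For case (i), $\La = \Z\langle H\rangle$ with $d := \langle H, H\rangle \geq 3$. Then $\La^* = \frac{1}{d}\Z\langle H\rangle$ and $A_\La = \La^*/\La \simeq \Z/d\Z$, generated by the class of $\tfrac{1}{d}H$. Since $d \geq 3$, the group $\Z/d\Z$ is not elementary abelian of exponent $\le 2$ (either $d$ is not a power of $2$, or $d \ge 4$ is a power of $2$, and in both subcases $\Z/d\Z$ has an element of order $\ge 3$ or order $4$); hence $-\Id_{A_\La} \ne \Id_{A_\La}$. For case (ii), $\mathrm{rk}(\La) = 2$ with $\disc(\La) = |A_\La| > 4$. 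Here I would argue purely by order: if $-\Id = \Id$ on $A_\La$, then $A_\La \simeq (\Z/2\Z)^{\oplus r}$ with $r \le \mathrm{rk}(\La) = 2$ (the minimal number of generators of $A_\La$ is bounded by the rank of $\La$), so $|A_\La| \le 4$, contradicting $\disc(\La) > 4$. Therefore $A_\La$ is not of exponent $\le 2$ and again $-\Id_{A_\La} \ne \Id_{A_\La}$.

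I expect the only genuinely delicate point to be the bound $r \le \mathrm{rk}(\La)$ on the number of generators of the discriminant group, which one gets from the Smith normal form of a Gram matrix of $\La$: if the Gram matrix has elementary divisors $e_1 \mid e_2 \mid \cdots$, then $A_\La \simeq \bigoplus_i \Z/e_i\Z$, so $A_\La$ is generated by $\mathrm{rk}(\La)$ elements, and its order is $|\det| = |\disc(\La)|$. Everything else is elementary. An alternative uniform phrasing, which I would note in passing, is: $A_\La$ is killed by $2$ iff $2\La^* \subseteq \La$; writing this out with a Gram matrix shows it forces all elementary divisors to divide $2$, hence $|\disc \La| \le 2^{\mathrm{rk}\La}$, which is $\le 2$ in case (i) (ruled out by $\langle H,H\rangle \ge 3$) and $\le 4$ in case (ii) (ruled out by $\disc(\La) > 4$). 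Either way the two cases collapse to the same elementary-divisor estimate.
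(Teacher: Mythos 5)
Your proof is correct and follows essentially the same route as the paper's: in case (i) compute $A_\La \simeq \Z/d\Z$ with $d \ge 3$, and in case (ii) use the two-generator structure of $A_\La$ (which the paper asserts by choosing a basis $\{H,S\}$ with $\La^* = \Z\langle H/N_1, S/N_2\rangle$, i.e.\ the Smith normal form fact you spell out) to deduce $|A_\La| \le 4$ if $-\Id = \Id$, contradicting $\disc(\La) > 4$. Your write-up is a bit more explicit about why $A_\La$ needs at most $\mathrm{rk}(\La)$ generators, but the underlying argument is the same.
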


\begin{proof}
Assume (i). We have \[A_{\La}=\frac{\La^*}{\La}=\Z/(H\cdot H)\Z \simeq \Z/n\Z\] where $n\geq 3$. It follows the lemma.

Assume (ii). We identify $\La^*$ with $\{x\in \La_{\Q}|<x,\La> \subseteq \Z\}$. So we can pick up a basis $\{H,S\}$ of $\Lambda$ so that $\La^*=\Z<\frac{H}{N_1},\frac{S}{N_2}>$ for some integers $N_1$ and $N_2$. Therefore, we have
\[A_{\La}=\frac{\La^*}{\La}=\Z/N_1\Z \oplus \Z/N_2\Z.\] If $\Id_{A_{\La}} = -\Id_{A_{\La}}$, then $N_1,N_2\in \{1,2\}$. It contradicts with the hypothesis $\disc(\La)$($=|\frac{\La^*}{\La}|=N_1\cdot N_2$) $>$ 4.
\end{proof}

\begin{lemm} \label{phi}
The number $\varphi(m)$ divides the rank of $T(X_3)$ where $\varphi(-)$ is Euler's function. Suppose that the middle $\Picard$ number of $X_3$ is at most 2.  Then
\begin{itemize}
\item $\varphi(m)=1,2,22 $ if the rank of $\MP(X_3)$ is $1$,
\item $\varphi(m)=1$ if the rank of $\MP(X_3)$ is $2$.
\end{itemize}
\end{lemm}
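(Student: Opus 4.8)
The plan is to reduce both assertions to the single divisibility $\varphi(m)\mid\mathrm{rank}\,T(X_3)$, which holds with no hypothesis on the Picard number, and then extract the two bullets by an elementary arithmetic with Euler's function. First I would record the numerology. A smooth cubic fourfold has fourth Betti number $23$ and $h^{3,1}(X_3)=1$; moreover the claim established inside the proof of Lemma \ref{tt} gives $\MP(X_3)=T(X_3)^{\perp}$ in the nondegenerate lattice $\Lambda=\Ho^4(X_3,\Z)$. Since the ranks of a sublattice and its orthogonal complement in a nondegenerate lattice add up to the total rank, $\mathrm{rank}\,T(X_3)=23-r$, where $r=\mathrm{rank}\,\MP(X_3)$ is the middle Picard number; Remark \ref{rmk} (that $T(X_3)_{\Q}\cap\MP(X_3)_{\Q}=0$) confirms the two pieces are genuinely complementary.

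Next I would prove $\varphi(m)\mid\mathrm{rank}\,T(X_3)$. Because $p\colon\Aut(X_3)\to\mu_m$ is surjective onto a cyclic group, pick $g\in\Aut(X_3)$ with $p(g)$ a generator of $\mu_m$; then $g^*$ acts on $\Ho^{3,1}(X_3)=\C\langle\sigma\rangle$ by a primitive $m$-th root of unity $\zeta_m$. Since $g^*$ preserves the Hodge structure and the cup-product pairing, it preserves $\Ho^{3,1}(X_3)$ and therefore its smallest primitive sub-Hodge structure $T(X_3)$. The rational Hodge structure $T(X_3)_{\Q}$ is irreducible: a nontrivial decomposition would put $\sigma$ in a proper sub-Hodge structure, contradicting minimality. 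By Schur's lemma $\mathrm{End}_{\mathrm{HS}}\bigl(T(X_3)_{\Q}\bigr)$ is a division algebra, so the commutative subalgebra $\Q[g^*]$ generated by $g^*|_{T(X_3)_{\Q}}$ is a number field; being a quotient of $\Q[x]/(x^N-1)$, with $N$ the order of $g^*$ on $T(X_3)_{\Q}$, it is isomorphic to $\Q(\zeta_d)$ for some $d\mid N$, and the minimal polynomial of $g^*|_{T(X_3)_{\Q}}$ is then the cyclotomic polynomial $\Phi_d$. Since $\sigma\in T(X_3)_{\C}$ is an eigenvector of $g^*$ with eigenvalue $\zeta_m$, that primitive $m$-th root is a root of $\Phi_d$, forcing $d=m$. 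Hence $T(X_3)_{\Q}$ is a $\Q(\zeta_m)$-vector space and $\varphi(m)=[\Q(\zeta_m):\Q]$ divides $\dim_{\Q}T(X_3)_{\Q}=\mathrm{rank}\,T(X_3)$.

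Finally I would impose $r\le 2$ and use that $\varphi(m)$ is odd only when $m\in\{1,2\}$ (an odd prime $p\mid m$ contributes the even factor $p-1$, and $\varphi(2^k)=2^{k-1}$ is even for $k\ge 2$). If $r=1$, then $\mathrm{rank}\,T(X_3)=22=2\cdot 11$, so $\varphi(m)$ divides $22$; among the divisors $1,2,11,22$ the value $11$ is odd and $>1$, hence not attained by $\varphi$, leaving $\varphi(m)\in\{1,2,22\}$. If $r=2$, then $\mathrm{rank}\,T(X_3)=21=3\cdot 7$, and all divisors $1,3,7,21$ of $21$ are odd, so $\varphi(m)=1$. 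The only nontrivial ingredient is the Schur-lemma step, namely the irreducibility of $T(X_3)_{\Q}$ as a rational Hodge structure together with the $\Q(\zeta_m)$-module structure it forces on it; everything else is bookkeeping with Betti numbers and with the values of $\varphi$.
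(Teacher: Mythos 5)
Your proof is correct, and the arithmetic at the end ($\varphi(m)\mid 22$ or $\varphi(m)\mid 21$, combined with $\varphi(m)$ being even for $m>2$ and never equal to an odd number $>1$) matches the paper's exactly. The interesting difference is in how you establish the divisibility $\varphi(m)\mid\mathrm{rank}\,T(X_3)$. You pass through the semisimplicity of polarizable rational Hodge structures: you argue $T(X_3)_{\Q}$ is an irreducible Hodge structure (from minimality of $T$ and one-dimensionality of $\Ho^{3,1}$), invoke Schur's lemma to make $\mathrm{End}_{\mathrm{HS}}(T(X_3)_{\Q})$ a division algebra, conclude $\Q[g^*]$ is a cyclotomic field $\Q(\zeta_d)$, and pin down $d=m$ by looking at the eigenvalue on $\sigma$. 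The paper instead decomposes $T(X_3)_{\Q}$ directly as a $\Q[\mu_m]$-module and rules out any irreducible constituent $\Q(\zeta_d)$ with $d<m$ by a concrete contradiction: such a piece would contain a nonzero rational $b$ fixed by $(f^d)^*$, but Remark \ref{rmk} ($T_{\Q}\cap\MP_{\Q}=0$) forces $b_{3,1}\neq 0$, and $(f^d)^*$ multiplies $b_{3,1}$ by $\xi^d\neq 1$. Both routes rest on the same two facts --- $h^{3,1}=1$ and minimality of $T$ --- but yours packages them via the CM-theoretic machinery (irreducibility plus Schur), whereas the paper's is more elementary, never invoking semisimplicity or Schur's lemma and instead using the ``endomorphism determined by its action on $T^{2,0}$'' fact together with the Hodge decomposition of $b$. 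Your version is cleaner conceptually and aligns with the standard Zarhin-style argument; the paper's is self-contained at the cost of a slightly longer contradiction. Either is acceptable, and the conclusion $T(X_3)_{\Q}\cong\Q(\zeta_m)^{\oplus r}$ is reached identically.
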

\begin{proof}
We mimic the proof of \cite[Chapter 15 Corollary 1.14]{K3}. Let $T$ be an integral Hodge structure of K3-type such that it does not contain any proper primitive sub Hodge structure $T'$ of K3-type. 
\begin{center}\label{fact}
\textbf{Fact:}  For $a,a'\in \mathrm{End_{H.S}}(T)$, if $a=a'$ on $T^{2,0}$, then $a=a'$.
\end{center}
Note that the action of $\Aut(X_3)$ on $T(X_3)_{\C}$ is induced by $p:\Aut(X_3)\rightarrow \mu_m$, see (\ref{eq5}), where $\mu_m$ acts on $T(X_3)_{\C}$ faithfully. Let $f\in \Aut(X_3)$ be the automorphism such that \[p(f)=\xi\]where $\xi$ is a primitive m-th root of unity. Therefore, the minimal polynomial $\Phi_m(x)$ of $\xi$ divides the characteristic polynormial $G(x)$ of the linear map $f^*$ on $T(X_3)_{\Q}$. In order to show that $\phi (m)| rk(T(X_3))$, we simply remark that all non-trivial irreducible subrepresentations of $\mu_m=<p(f)>$ on $T(X_3)_{\Q}$ are of rank $\phi(m)$. \\
Indeed, otherwise, for some $n<m$, there exists a non-zero $b\in T(X_3)$ with $(f^n)^*(b)=b$. Suppose that $b=b_{1,3}+b_{2,2}+b_{3,1}$. It follows from Remark \ref{rmk} that $b_{3,1}\neq 0$. By the fact we mention at the beginning of the proof, we conclude that \[f^n (b_{3,1})=(\xi)^n b_{3,1}\neq b_{3,1}.\]It is absurd. 
In other words, as a representation of $\mu_m$, we have \[T(X_3)\simeq \Z[\xi]^{\oplus r}\]with $r=rk(T(X_3))/\phi(m)$. It follows that \[\phi(m)| rk(T(X_3))=23-rank(\MP(X_3)).\]
So we conclude that
\begin{itemize}
\item $\phi(m)\in \{1,2,11,22\}$ if $rank(\MP(X_3))=1$ and
\item $\phi(m)\in \{1,3,7,21\}$ if $rank(\MP(X_3))=2$.
\end{itemize}
By the elementary facts that 
\begin{itemize}
\item there is no $m$ such that $\phi(m)=11$ and
\item $\phi(m)$ is even if $m>2$,
\end{itemize} we prove the lemma.

\end{proof}

\begin{theorem}
The automorphism group $\Aut(X_3)$ is
\begin{itemize}
 \item $\{\Id\}$ or $\Z/3\Z$ if the middle $\Picard$ number of $X_3$ is one.
\item $\{\Id\}$ if the middle $\Picard$ number of $X_3$ is two.
\end{itemize}
\end{theorem}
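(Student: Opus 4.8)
The plan is to run the analysis of Sections~2--7 to its end. By Corollary~\ref{cora} we may write $\Aut(X_3)=\mu_m$ for some $m$, and by Lemma~\ref{phi} the possibilities are already cut down to $m\in\{1,2,3,4,6,23,46\}$ when the middle Picard number is one, and to $m\in\{1,2\}$ when it is two (recall $\varphi(m)=22$ only for $m=23,46$). Since $m=3$ does occur, by Proposition~\ref{pic}, everything comes down to excluding $m\in\{2,4,6,46\}$ in general, and in addition $m=23$ in the rank-one case.

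The even values of $m$ I would exclude by producing an anti-symplectic involution and obstructing it with the lattice theory of Section~7. If $m$ is even, let $g$ generate $\mu_m$ and set $\iota=g^{m/2}$; then $\iota$ has order $2$ and $p(\iota)=-1$, so $\iota^{*}$ acts by $-1$ on $\Ho^{3,1}(X_3)=\C\langle\sigma\rangle$, hence also on $\Ho^{1,3}(X_3)=\overline{\Ho^{3,1}(X_3)}$. The $(+1)$-eigenspace of $\iota^{*}$ on $T(X_3)_{\Q}$ is then a rational sub-Hodge structure of pure type $(2,2)$, so it lies in $\MP(X_3)_{\Q}\cap T(X_3)_{\Q}=0$ by Remark~\ref{rmk}; therefore $\iota^{*}=-\Id$ on $T(X_3)$. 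Now I would argue as in the proof of Lemma~\ref{st}. When the middle Picard number is one we have $T(X_3)=(\Z h^{2})^{\perp}=\Ho^{4}_{prim}(X_3)$ and $\MP(X_3)=\Z h^{2}$, so $\iota^{*}$ is $+\Id$ on $\Z h^{2}$ and $-\Id$ on $\Ho^{4}_{prim}(X_3)$; since $\Ho^{4}(X_3,\Z)$ is unimodular it contains a glue vector $\tfrac{1}{3}(h^{2}+v)$ with $v\in\Ho^{4}_{prim}(X_3)$, and $\iota^{*}$ sends it to $\tfrac{1}{3}(h^{2}-v)$, whose sum and difference with the original would force $\tfrac{1}{3}h^{2}\in\Ho^{4}(X_3,\Z)$, impossible since $\langle h^{2},h^{2}\rangle=3$. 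When the middle Picard number is two, $\iota$ induces through the Abel--Jacobi map an anti-symplectic involution of the polarized holomorphic-symplectic fourfold $\Fa(X_3)$, and being linear it fixes $h^{2}\in\MP(X_3)$; under the identification $A_{T(X_3)}\cong A_{\MP(X_3)}$ of Lemma~\ref{lattice}, $\iota^{*}=-\Id$ on $A_{T(X_3)}$ would force $\iota^{*}$ to induce $-\Id$ on $A_{\MP(X_3)}$, which is impossible by Lemma~\ref{absurd} --- using $\disc(\MP(X_3))>4$, i.e. Hassett's nonemptiness criterion for $\mathcal{C}_{d}$, exactly as invoked in Lemma~\ref{st} via \cite{Brend} --- because the only isometries of $\MP(X_3)$ fixing $h^{2}$ induce either the identity or a map trivial on the $\Z/3\Z$-part of $A_{\MP(X_3)}$, never $-\Id$. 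Hence $m$ is odd, which already settles the rank-two case: $\Aut(X_3)=\{\Id\}$.

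For the rank-one case it remains to exclude $m=23$, i.e. to show that a smooth cubic fourfold admits no automorphism of order $23$; this I expect to be the main obstacle. Such an automorphism is linear by Lemmas~\ref{autlin} and~\ref{picone}, so $g=\mathrm{diag}(\zeta^{a_0},\dots,\zeta^{a_5})$ with $\zeta=e^{2\pi i/23}$, and the defining form $F$ is semi-invariant of some weight $c$. Since $\mu_{23}$ acts faithfully on $\Ho^{3,1}(X_3)$, $g^{*}$ has a primitive $23$rd root of unity as eigenvalue; and since $\Ho^{4}_{prim}(X_3)=T(X_3)$ has rank $22=\varphi(23)$ and carries no $g^{*}$-fixed rational $(2,2)$-class (because $\MP(X_3)_{\Q}\cap\Ho^{4}_{prim}(X_3)_{\Q}=0$), the $22$ eigenvalues of $g^{*}$ on $\Ho^{4}_{prim}(X_3)$ are exactly $\zeta,\zeta^{2},\dots,\zeta^{22}$, each simple. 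Now invoke Griffiths' residue description: $\Ho^{3,1}$, $\Ho^{2,2}_{prim}$, $\Ho^{1,3}$ are identified with the graded pieces $(R_{F})_{0}$, $(R_{F})_{3}$, $(R_{F})_{6}$ of the Jacobian ring $R_{F}=\C[X_0,\dots,X_5]/(\partial_0 F,\dots,\partial_5 F)$, which has Hilbert function $k\mapsto\binom{6}{k}$ and a perfect Poincar\'e pairing $(R_{F})_{k}\times(R_{F})_{6-k}\to(R_{F})_{6}$, the $g$-weight of a residue class $\mathrm{Res}(P\,\Omega/F^{k})$ being $(\deg_{g}P)+s-kc$ with $s=\sum_i a_i$ and $k=2,3,4$ respectively; moreover smoothness forces every $X_i$ to occur in $F$. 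Reading off the weights of $(R_{F})_{0},(R_{F})_{3},(R_{F})_{6}$ in terms of the $a_i$, $c$ and $s$, and imposing that the resulting $22$ residues modulo $23$ be pairwise distinct and avoid $0$, forces an impossible configuration. (Alternatively, this is subsumed by the classification of cyclic automorphism groups of smooth cubic fourfolds, none of which has order $23$.) Hence $m\neq 23$, so in the rank-one case $m\in\{1,3\}$ and $\Aut(X_3)$ is $\{\Id\}$ or $\mu_3\cong\Z/3\Z$; together with Proposition~\ref{pic} both cases occur. This proves the theorem. The one genuinely delicate step is the exclusion of order $23$; everything else is a repackaging of the lattice arguments of Lemma~\ref{st} together with the minimality of $T(X_3)$.
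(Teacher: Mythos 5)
Your proposal follows essentially the same route as the paper: restrict to $\Aut(X_3)=\mu_m$ via Corollary~\ref{cora}, bound $m$ via Lemma~\ref{phi}, use lattice theory (the identification $A_{\MP(X_3)}\cong A_{T(X_3)}$ from Lemma~\ref{lattice} together with Lemma~\ref{absurd}) to exclude even $m$, and appeal to the classification of prime-order automorphisms to exclude $m=23$. A few small differences are worth noting. First, your list $m\in\{1,2,3,4,6,23,46\}$ for $\varphi(m)\in\{1,2,22\}$ is actually the complete one; the paper's displayed list $\{1,2,3,23,46\}$ overlooks $m=4,6$, though this is harmless since the paper's parity argument excludes all even $m$ at once, just as yours does. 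Second, your justification that $\iota^*=-\Id$ on $T(X_3)$ --- passing to $\iota=g^{m/2}$, observing $p(\iota)=-1$, and noting that the $(+1)$-eigenspace on $T(X_3)_{\Q}$ would be a rational sub-Hodge structure of pure type $(2,2)$, hence zero by Remark~\ref{rmk} --- is a slightly more elementary rephrasing of the paper's ``\textbf{Fact}'' about K3-type Hodge structures in Lemma~\ref{phi}, and your glue-vector contradiction ($\frac{1}{3}(h^2+v)$ and $\frac{1}{3}(h^2-v)$ both in the lattice forcing $\frac{1}{3}h^2$ integral) is a hands-on version of the discriminant-group clash $\Id\neq-\Id$ on $\Z/3\Z$. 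Third, and most important: your primary route for excluding $m=23$ via Griffiths residues and weight-counting in the Jacobian ring is only a sketch --- the decisive step (``imposing that the resulting $22$ residues modulo $23$ be pairwise distinct and avoid $0$ forces an impossible configuration'') is not carried out, and without the combinatorics this does not close. What actually makes your proof complete is the parenthetical fallback to the classification of cyclic automorphism groups (González-Aguilera--Liendo, cited in the paper as~\cite[Theorem~1.6]{autocubic}), which is precisely what the paper does. So the argument is correct provided one takes the citation route for $m=23$; the residue approach, if you want it to stand on its own, still needs to be finished.
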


\begin{proof}
Suppose that $rk(\MP(X_3))=1$. It follows from Corollary \ref{cora} that \[\Aut(X_3)=\mu_m.\] Using Lemma \ref{phi} and solving the equations \[\phi(m)=1,2,22,\] we conclude $m\in\{1,2,3,23,46\}$. By \cite[Theorem 1.6]{autocubic}, we know $m\neq 23$ or $46$. We claim $|\Aut(X_3)|$ is odd. It implies that $m\neq 2$ or $46$.

In fact, we suppose that $g\in \A(X_3)$ is of order $2$. Recall the fact that the action of $g^*$ on $T(X_3)$ is determined by the action on $T^{3,1}(X_3)$ (see the fact in the proof of Lemma \ref{phi}). It follows from this fact that $g^*=-\Id$ on $T(X_3)$. Therefore, we have $g^*=-\Id$ on $A_{T(X_3)} \cong A_{\MP(X_3)}$ (Lemma \ref{lattice}). On the other hand, we have $g^*=\Id$ on $\MP(X_3)$ since $g$ is linear. It follows that $g^*=\Id$ on $A_{\MP(X_3)}\cong A_{T(X_3)}$. However, it is absurd by Lemma \ref{absurd}.

We conclude that $m=1$ or $3$. According to a theorem of Deligne \cite[Exp XIX]{SGA7}, we know that a very general cubic fourfold is of middle Picard number one. By \cite{MM}, the automorphism group of a general cubic fourfold is trivial. Therefore, we show that there is a cubic fourfold of middle Picard number one with trivial automorphism group. For $m=3$, it follows from Proposition \ref{pic} that there is a cubic fourfold $X$ of middle Picard number one with an automorphism of order 3 as follows:
\[[X_0:X_1:\ldots:X_5]\rightarrow [\xi_3 X_0:X_1:\ldots:X_5],\]
where $\xi_3$ is $\mathrm{exp}(2\pi i/3)$.

For $rk(\MP(X_3))=2$, it follows from Corollary \ref{cora} that $\A(X_3)=\mu_m$. By Lemma \ref{phi} and the equation $\phi(m)=1$, we conclude that \[m=1 \mathrm{~or~} 2.\] We claim that $m\neq 2$. In fact, if  $g\in \A(X_3)$ is of order $2$, then $g^*=-\Id$ on $T(X_3)$ (the argument is similar to above). It follows that $g^*=-\Id$ on $A_{T(X_3)}\cong A_{\MP(X_3)}$. From the proof of Lemma \ref{st}, we know that there are only two possible cases for $g^*$ as follows:
\begin{enumerate}
\item $g^*=\Id$ on $\MP(X_3)$ (i.e., det $g^*|_{\MP(X_3)}=1$),
\item $g^*=(\Id,-\Id)$ on $\MP(X_3)=(3)\oplus_{\perp}(2n)$ where $n\geq 2$.
\end{enumerate}

However, by Lemma \ref{lattice} and Lemma \ref{absurd}, both cases are absurd. Therefore, we show $m\neq 2$. In summary, we prove the theorem.
\end{proof}
\appendix


\bibliographystyle{alpha}

\end{document}